\documentclass[11pt,reqno]{amsart}

\usepackage[a4paper,textwidth=6.5in,top=25mm,bottom=25mm,headheight=12pt,headsep=12pt]{geometry}
\usepackage{comment}
\usepackage{amsmath}
\usepackage{amsfonts}
\usepackage{amssymb}
\usepackage{amsthm}
\usepackage{esint}
\usepackage{mathtools}
\usepackage{array,makecell,longtable}
\usepackage{xcolor}
\usepackage{graphicx}
\usepackage[toc,title,page]{appendix}
\usepackage{subcaption}
\usepackage{cite}
\usepackage{etoolbox}
\usepackage{color}
\usepackage{bm}
\usepackage{mathrsfs}
\usepackage{enumitem}
\usepackage{float}
\usepackage{microtype}

\makeatletter
\def\hyper@nopatch@thm{}
\makeatother
\usepackage[colorlinks=true,citecolor=blue,linkcolor=black,urlcolor=black]{hyperref}

\setlist{topsep=4pt plus 1pt minus 1pt,itemsep=2pt,parsep=0pt}
\theoremstyle{remark}
\newtheorem{remark}{Remark}[section]
\theoremstyle{plain}
\newtheorem{theorem}[remark]{Theorem}

\newtheorem{lemma}[remark]{Lemma}

\theoremstyle{definition}
\newtheorem{definition}[remark]{Definition}

\theoremstyle{plain}
\numberwithin{equation}{section}
\allowdisplaybreaks[2]

\title[Fourier--Laplace Injectivity for Oblique and Polygonal Cylinders in Passive IPs]{Fourier--Laplace Injectivity for Oblique and Polygonal Cylinders in Passive Inverse Problems}

\author{Yan Jiang}
\address{Department of Mathematics, City University of Hong Kong, Hong Kong SAR, China.}
\email{yjian24@cityu.edu.hk}

\author{Hongyu Liu}
\address{Department of Mathematics, City University of Hong Kong, Hong Kong SAR, China.}
\email{hongyu.liuip@gmail.com, hongyliu@cityu.edu.hk}

\author{Tianhao Ni}
\address{School of Mathematical Sciences, Fudan University, Shanghai, China.}
\email{thni@fudan.edu.cn}

\author{Kai Zhang}
\address{Department of Mathematics, Jilin University, Changchun, Jilin, China.}
\email{zhangkai7609@gmail.com, zhangkaimath@jlu.edu.cn}

\keywords{Fourier--Laplace transform; totally real submanifold; Brion--Barvinok theorem; passive inverse problem; complex geometrical optics}
\subjclass[2020]{35R30, 35P25}
\date{}

\hypersetup{
  hypertexnames=false,
  pdfauthor={Yan Jiang, Hongyu Liu, Tianhao Ni, and Kai Zhang},
  pdftitle={Fourier--Laplace Injectivity for Oblique and Polygonal Cylinders in Passive Inverse Problems}
}

\begin{document}
\raggedbottom
\setlength{\abovedisplayskip}{7pt plus 2pt minus 2pt}
\setlength{\belowdisplayskip}{7pt plus 2pt minus 2pt}
\setlength{\abovedisplayshortskip}{0pt plus 2pt}
\setlength{\belowdisplayshortskip}{4pt plus 2pt minus 2pt}

\begin{abstract}

We study Fourier--Laplace transforms on null phases arising in passive
inverse problems. Densities with codimension-one dependence in oblique cylinders are first considered. Integration in the distinguished
direction reduces the data to an entire transform on a real-analytic maximally
totally real submanifold. Holomorphic uniqueness then gives injectivity. A fixed transverse polytope swept along a polygonal curve, forming a polygonal cylinder, is next considered. The
density is assumed to be polynomial in the material transverse variables on
each swept piece. A weighted Brion--Barvinok expansion separates the vertex
terms and the successive pieces. The same representation gives a
finite-dimensional Lipschitz estimate when the polynomial degree is fixed.

We apply these transform results to passive wave and heat equations in
arbitrary dimension. The initial state is zero outside the observation domain.
Equality of the Laplace-domain boundary data determines the quantity of the
initial data divided by the speed squared in the acoustic model and the
quantity of the initial data divided by the coefficient of thermal
conductivity in the parabolic model. The question of what
remains true when only weighted generalized boundary data on a finite interval
\([0,T]\) are available is then studied. These data determine the harmonic
moments of the quotient up to the weighted terminal residual. This identity
gives a H\"older estimate in an oblique cylinder and a Lipschitz estimate in a
polygonal cylinder. Finally, a continuous generating curve is approximated by a
polygonal curve and a continuous material density by piecewise transverse
polynomials. The resulting reconstruction estimate separates measurement
noise, density approximation error, geometric error, and the condition number
of the finite sampling matrix.

\end{abstract}

\maketitle

%%%%%%%%%%%%%%%%%%%%%%%%%%%%%%%%%%%%%%%%%%%%%%%%%%%%%%%%%%%%

\section{Introduction}

Passive inverse problems arise when the available measurements are generated by
an unknown initial state rather than by boundary inputs that can be freely
prescribed. This situation is natural in thermoacoustic and photoacoustic
tomography, where an electromagnetic excitation produces an acoustic wave whose
initial pressure is unknown and whose propagation is affected by the acoustic
properties of the medium
\cite{kruger1999thermoacoustic,beard2011biomedical,
wang2012photoacoustic,poudel2019survey,matthews2018parameterized,
clason2008quasi,klibanov2008new,klibanov2013thermo}.
In the standard mathematical model, the measured wave field is generated by a
single initial state. Therefore, in contrast to active boundary value problems,
the data do not allow one to prescribe two independent boundary inputs or to form
arbitrary products of solutions. This lack of independent probing is one of the
main obstructions in simultaneous recovery problems for the sound speed and the
initial pressure
\cite{liu2015determining,knox2020determining,kian2025determination,chen2025determining,kian2026uniqueness}.

Biomedical imaging provides a natural setting for these questions, from
thermoacoustic and photoacoustic tomography (TAT/PAT) to functional photoacoustic
brain imaging
\cite{beard2011biomedical,na2022brain}.
Related electromagnetic problems arise in brain imaging, where currents and material parameters are sought from boundary field measurements
\cite{deng2019brain},
and in Wi-Fi sensing, where received radio signals are used to monitor human motion and respiration
\cite{tan2015wifi,armenta2024wifi}.
Recent studies of passive recovery under structural assumptions concern wave equations
\cite{kian2026uniqueness},
subdiffusion
\cite{deng2026subdiffusion},
nonlinear balance laws
\cite{duan2025balance}
and structured elastic media
\cite{gao2026passive}.

This restriction on independent probing distinguishes passive problems from the classical active inverse
boundary value problems. For instance, in the Calder\'on problem and its
Schr\"odinger formulation, equality of Dirichlet-to-Neumann maps yields an
identity of the form
\begin{equation*}
\int_\Omega (q_1-q_2)u_1u_2\,dx=0,
\end{equation*}
where \(q_1\) and \(q_2\) are two candidate targets for the unknown
object quantity, the boundary values of \(u_1\) are the measurements,
and \(u_2\) is the test function. Complex geometrical optics solutions then reduce the identity to the vanishing of the Fourier transform of \(q_1-q_2\). This mechanism underlies the classical uniqueness results for the Calder\'on problem and related Schr\"odinger inverse problems \cite{calderon2006inverse,sylvester1987global,nachman1996global,uhlmann2009electrical}. Similar ideas also appear in many hyperbolic and nonlinear inverse problems,
where geometric optics, Gaussian beams, Runge approximation, or higher order
linearization generate sufficiently rich weighted integral identities
\cite{kurylev2018inverse,lassas2018inverse,hintz2022inverse,
uhlmann2021inverse,jiang2025inverse1,lin2024determining,jiang2025inverse2,
kian2019recovery,feizmohammadi2021recovery,lin2022simultaneous,ding2025stationary,liu2025running,liu2025decoding,fu2026kirchhoff}.
The common feature in these active settings is that the measurement map provides
enough freedom to produce a large family of testing weights.
For parabolic equations, simultaneous recovery of a space--time dependent
coefficient and initial conditions from a one-parameter family of lateral
Cauchy data is studied in \cite{klibanov2026simultaneous}.

In the passive setting, Green's identity leaves only one adjustable test
solution. Such data are not injective on general \(n\)-dimensional densities. We identify two codimension-one classes for which injectivity is recovered.

For the first problem, we consider a class of generalized oblique cylinders, comprising oblique cylinders and finite unions of parallel layers. For
densities depending only on the transverse variable, integration along the
longitudinal direction produces an entire Fourier--Laplace transform on a
real-analytic, maximally totally real submanifold of \(\mathbb{C}^{n-1}\). The standard holomorphic uniqueness principle on such submanifolds then gives injectivity; see \cite{harris1983real,baouendi1999real}. 

For the second problem, the domain is obtained by sweeping a fixed transverse polytope along a polygonal curve; we refer to the resulting domain as a polygonal cylinder. For densities that are
polynomial in the material transverse variables on each straight piece, the weighted Brion--Barvinok formula derived from \cite{barvinok1992exponential,barvinok1993computing} gives a finite vertex expansion. Fourier--Laplace transforms of polytopes on rationally parametrized sets, and the related Pompeiu problem, have been studied in \cite{engel2023identity,machado2023null,kolountzakis2024curves}. Polynomial
densities on polytopes also occur in the moment problem; see
\cite{gravin2018moments}. The ordering of the longitudinal endpoints
separates the swept pieces, while generic transverse directions distinguish
the vertex phases. For a fixed polynomial degree, the same construction
yields a conditional finite-dimensional stability estimate. The null phases
used here form a zero-radius isotropic cone. The general identity theorems
in \cite{engel2023identity,kolountzakis2024curves} do not directly treat the
ordered, polynomially weighted sweep considered below. Our proof uses the
longitudinal order to isolate one endpoint section at a time.

The finite-dimensional estimate also provides a controlled approximation method
for a continuous generating curve and a continuous density. The curve is replaced by
its piecewise linear interpolant. On each segment, the density is replaced by a
polynomial in the material transverse variables. The estimate keeps the two
model errors separate. It also shows why polynomial approximation alone does not
imply convergence: the inverse of the sampling matrix may become large as the
number of segments or the polynomial degree increases.

For the PDE applications, the oblique cylinder section is specialized to an
interval, while the polygonal cylinder geometry is retained. The initial state
and the coefficient perturbation are sharply truncated to the observation
domain and may jump across its boundary. The structural assumption is imposed
on the compactly supported quotient. The Laplace-domain boundary data determine this quotient in arbitrary dimension under either geometric assumption and the stated local decay conditions. This low-frequency step extends the separated-variable quotient recovery in \cite{chen2025determining}. The new points are the translated-section geometry, the polygonal sweep, the sharp cutoff, and the corresponding transform arguments.

When the observation stops at a finite time \(T\), the proof no longer yields
the vanishing transform of the quotient. Instead, the generalized boundary
data of the time-integrated state give an endpoint identity. If the two
finite-time measurements agree, the transform of the quotient difference
equals the transform of the terminal residual. We quantify this loss in each of the two problems. Thus finite-time equality gives exact uniqueness only when the terminal residual vanishes. A decay estimate gives approximate uniqueness as \(T\) increases. These statements describe the loss caused by truncating the observation time. They are not estimates for noisy boundary measurements.

The paper is organized as follows. Section~\ref{sec:main_results} introduces
the oblique cylinder and polygonal cylinder and states the transform
injectivity, wave and heat quotient-recovery, and finite-time results.
Section~\ref{sec:totally_real_core} proves the oblique cylinder result by
separating the longitudinal variable and applying holomorphic uniqueness on a
maximally totally real submanifold. Section~\ref{sec:brion_barvinok_core}
treats the polygonal cylinder: it derives the weighted Brion--Barvinok
expansion, proves endpoint separation, establishes finite-dimensional
stability, and studies polygonal and polynomial approximation of a continuous
sweep. Section~\ref{sec:hyperbolic_quotient_recovery} constructs the localized
elliptic corrector and applies the low-frequency Laplace-domain identity to
the wave equation. Section~\ref{sec:parabolic_application} gives the analogous
argument for the heat equation. Section~\ref{sec:finite_time_stability}
introduces the weighted generalized finite-time boundary data, proves the
endpoint identity, and derives the H\"older and Lipschitz estimates for both
problems. Section~\ref{sec:conclusion} summarizes the results and their
limitations.

%%%%%%%%%%%%%%%%%%%%%%%%%%%%%%%%%%%%%%%%%%%%%%%%%%%%%%%%%%%%%%%%%%%%%%
\section{Mathematical Setting and Main Results}\label{sec:main_results}

We introduce the two geometries used in the Fourier--Laplace injectivity
results and then formulate the corresponding passive wave and heat problems.
For a measurable set \(A\), its characteristic function is denoted by \(\chi_A\).

%%%%%%%%%%%%%%%%%%%%%%%%%%%%%

\subsection{Oblique cylinder geometry}
\label{subsec:Oblique_cylinder_geometry}

Fix \(n\ge2\) and a unit vector \(\theta\in\mathbb S^{n-1}\), and set
\(\theta^\perp=\{y\in\mathbb R^n:y\cdot\theta=0\}\).
Identify \(\theta^\perp\) with \(\mathbb R^{n-1}\). Let \(D\subset\theta^\perp\) and \(E\subset\mathbb R\) be bounded measurable sets of positive measure, and let \(\mu\in\theta^\perp\) and \(x_0\in\mathbb R^n\).
Define
\begin{equation}\label{eq:general_layer_domain}
\Omega_{\theta,\mu,E}:=\{x=x_0+y+s\theta:y\in D,~s\in \mathbb R,~s-\mu\cdot y\in E\}.
\end{equation}

For fixed \(y\in D\), the section of
\(\Omega_{\theta,\mu,E}\) at \(y\) is defined by
\begin{equation*}
(\Omega_{\theta,\mu,E})_y:=\{s\in\mathbb R: x_0+y+s\theta\in\Omega_{\theta,\mu,E}\}.
\end{equation*}
Since \(x_0+y+s\theta\in\Omega_{\theta,\mu,E}\) if and only if
\(s-\mu\cdot y\in E\), we obtain
\begin{equation*}
(\Omega_{\theta,\mu,E})_y=\mu\cdot y+E.
\end{equation*}

For a set \(E\subset\mathbb R\) and \(a\in\mathbb R\), the translate
of \(E\) by \(a\) is
\begin{equation*}
E+a:=\{t+a:t\in E\}.
\end{equation*}
Thus, taking \(a=\mu\cdot y\), each section is the translate
\(E+\mu\cdot y\) of the fixed set \(E\). Hence all sections are
translates of \(E\).

In three dimensions, take \(\theta=e_3=(0,0,1)\), \(x_0=0\), and
\(E=(0,h)\), with \(D\subset\{x_3=0\}\) and
\(\mu=(\mu_1,\mu_2,0)\). This yields a cylinder whose generators are
parallel to the \(x_3\)-axis. When \(\mu\neq 0\), its two end sections lie in
the parallel oblique planes
\begin{equation*}
x_3=\mu_1x_1+\mu_2x_2
\quad\text{and}\quad
x_3=\mu_1x_1+\mu_2x_2+h,
\end{equation*}
so the resulting cylinder is an oblique cylinder of the type considered below.

Let $U\Subset\theta^\perp\setminus\{0\}$ be a nonempty open set. For
$\eta\in U$, define
\begin{equation*}
\zeta_\theta(\eta):=\eta+i|\eta|\theta\in\mathbb C^n.
\end{equation*}
Then
\begin{equation*}
\zeta_\theta(\eta)\cdot\zeta_\theta(\eta)=|\eta|^2+(i|\eta|)^2=0.
\end{equation*}

The translated-section structure separates the longitudinal variable and
leads to the following injectivity result.
\begin{theorem}
\label{thm:main_totally_real_uniqueness}
Let $\Omega_{\theta,\mu,E}$ be given by
\eqref{eq:general_layer_domain} and
\begin{equation*}
G(x)=G_0(y)\chi_{\Omega_{\theta,\mu,E}}(x), \quad G_0\in L^1(D),
\end{equation*}
where $x=x_0+y+s\theta$. Assume that
\begin{equation}\label{eq:main_TR_identity_minus}
\int_{\Omega_{\theta,\mu,E}}G(x)e^{-ix\cdot\zeta_\theta(\eta)}\,dx=0,
\quad \eta\in U. 
\end{equation}
Then
\begin{equation*}
G=0, \quad \text{a.e. }\Omega_{\theta,\mu,E}.
\end{equation*}
\end{theorem}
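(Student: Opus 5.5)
We plan to substitute the parametrization $x=x_0+y+s\theta$ into \eqref{eq:main_TR_identity_minus} and integrate out $s$ first. Since $\eta\in\theta^\perp$, we have $x\cdot\zeta_\theta(\eta)=x_0\cdot\zeta_\theta(\eta)+y\cdot\eta+i|\eta|s$. The factor $e^{-ix_0\cdot\zeta_\theta(\eta)}$ is nonvanishing and can be dropped. Writing $\widehat{\chi_E}(\tau):=\int_E e^{-i\tau t}\,dt$, which is entire in $\tau\in\mathbb C$ because $E$ is bounded, Fubini (justified since $G_0\in L^1(D)$ and $E$ is bounded) gives for $\eta\in U$
\begin{equation*}
0=\int_D G_0(y)e^{-iy\cdot\eta}\int_{\mu\cdot y+E}e^{|\eta|s}\,ds\,dy
=\widehat{\chi_E}(i|\eta|)\int_D G_0(y)e^{-iy\cdot(\eta+i|\eta|\mu)}\,dy .
\end{equation*}
Here we used $\int_{\mu\cdot y+E}e^{|\eta|s}\,ds=e^{|\eta|\mu\cdot y}\int_E e^{|\eta|t}\,dt$. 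The factor $\int_E e^{|\eta| t}\,dt$ is strictly positive because $|E|>0$, so we can divide by it. Setting $F(z):=\int_D G_0(y)e^{-iy\cdot z}\,dy$, which is entire on $\mathbb C^{n-1}$ since $D$ is bounded, we obtain $F(\Phi(\eta))=0$ for all $\eta\in U$, where $\Phi(\eta):=\eta+i|\eta|\mu$ and $\theta^\perp$ is identified with $\mathbb R^{n-1}$.

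Next we show that $M:=\Phi(U)$ is a real-analytic maximally totally real submanifold of $\mathbb C^{n-1}$. The map $\Phi$ is real-analytic on $U$ because $0\notin\overline U$. It is injective, since its real part is $\eta$. Its real differential is $d\Phi(v)=v+i(\eta\cdot v/|\eta|)\mu$, so the tangent space is $T=\{v+i\ell(v)\mu: v\in\mathbb R^{n-1}\}$ with $\ell$ real linear. Maximal total reality means $T\cap iT=\{0\}$. Suppose $v+i\ell(v)\mu=i(w+i\ell(w)\mu)$. Comparing real parts gives $v=-\ell(w)\mu$, and comparing imaginary parts gives $w=\ell(v)\mu$. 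Hence $v$ and $w$ are both multiples of $\mu$, say $v=a\mu$ and $w=b\mu$. Writing $c:=\eta\cdot\mu/|\eta|$, so that $\ell(\mu)=c$, the two equations become $a=-cb$ and $b=ca$. Therefore $a=-c^2a$, which forces $a=0$ and then $b=0$. So $T$ is totally real of real dimension $n-1$.

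The standard identity principle then applies: a holomorphic function vanishing on a maximally totally real submanifold vanishes identically near it, hence on all of $\mathbb C^{n-1}$ by connectedness. Concretely, one can complexify $\Phi$ to a local biholomorphism near a point of $U$, which works because $d\Phi$ is complex-linearly invertible by the total reality just shown. Then $F\circ\Phi^{\mathbb C}$ is holomorphic and vanishes on an open subset of $\mathbb R^{n-1}$, so it vanishes identically. Thus $F\equiv0$, in particular $\widehat{G_0\chi_D}(\xi)=0$ for all real $\xi$, and Fourier injectivity on $L^1$ gives $G_0=0$ a.e. on $D$. Hence $G=0$ a.e. on $\Omega_{\theta,\mu,E}$.

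The main obstacle is the total-reality check. The key point is that the imaginary shift $|\eta|\mu$ has a rank-one differential, which can never produce a complex line. A secondary point is ensuring that $\widehat{\chi_E}(i|\eta|)\neq0$, which holds because this quantity is a positive real integral rather than a general complex value.
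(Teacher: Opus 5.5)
Your proposal is correct and follows essentially the same route as the paper. You integrate in the $\theta$-direction, divide out the positive Laplace factor $\int_E e^{|\eta|t}\,dt$, and show that an entire transform vanishes on a real-analytic maximally totally real submanifold (your $F(z)=H(-iz)$ vanishes on $iM_U$). You then conclude by holomorphic uniqueness and Fourier injectivity. The one minor difference is that you complexify the parametrization $\Phi(\eta)=\eta+i|\eta|\mu$ directly instead of passing through the paper's local graph normal form (Lemma~\ref{lem:local_graph_TR}); this is a valid shortcut, because $\operatorname{Re}\Phi(\eta)=\eta$ and the complex Jacobian $I+i\,\mu\,(\eta/|\eta|)^{T}$ has determinant $1+i\,\mu\cdot\eta/|\eta|\neq0$.
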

The conclusion still holds with \eqref{eq:main_TR_identity_minus} replaced by
\begin{equation*}
\int_{\Omega_{\theta,\mu,E}}
G(x)e^{ix\cdot\zeta_\theta(\eta)}\,dx=0,
\qquad \eta\in U.
\end{equation*}	
The proof is given in Section~\ref{sec:totally_real_core}. The PDE
applications use the interval case $E=(0,h)$ for $h>0$.

%%%%%%%%%%%%%%%%%%%%%%%%%%%%%%%%%%%%%%%%%%%%%%%%%%%%%%%%%%%%%

\subsection{Polygonal cylinder with a common transverse base}
\label{subsec:Polygonal_cylinder_geometry}

We next introduce the second injectivity geometry. Let
$D\subset\theta^\perp$ be a compact simple convex polytope
with nonempty interior. Thus every vertex of $D$ is generated by exactly
$n-1$ linearly independent inward edge directions. Fix
\begin{equation*}
h_j>0, \quad \gamma_j\in\theta^\perp, \quad j=1,\cdots,N,
\end{equation*}
and define the cumulative longitudinal and transverse displacements
\begin{equation}\label{eq:polygonal_cumulative_data}
t_0:=0,\quad t_j:=\sum_{k=1}^j h_k, \quad b_0:=0,\quad b_j:=\sum_{k=1}^j h_k\gamma_k .
\end{equation}
The polygonal generatrix is the continuous piecewise-affine curve
\begin{equation*}
\Gamma(t):=b_{j-1}+(t-t_{j-1})\gamma_j+t\theta,\quad t_{j-1}\leq t\leq t_j.
\end{equation*}
We write $\Gamma_j:=\Gamma([t_{j-1},t_j])$ for the image of its $j$-th
straight segment. In particular,
\begin{equation*}
\Gamma(t)\cdot\theta=t,\quad r_j:=\Gamma(t_j)=b_j+t_j\theta .
\end{equation*}
The same transverse base $D$ is translated along $\Gamma$ by the sweep map
\begin{equation*}
\Phi:\operatorname{int}D\times(0,t_N)\longrightarrow\mathbb R^n,
\quad \Phi(y,t):=x_0+y+\Gamma(t),
\end{equation*}
where $\operatorname{int}D$ denotes the interior of \(D\). Since the  $L^2$-projection of $(\Phi(y,t)-x_0)$ onto $\theta$ ($\theta$-section) equals $t$, the map $\Phi$ is injective and has the explicit piecewise-Lipschitz inverse
\begin{equation*}
t=(x-x_0)\cdot\theta,\quad y=\Pi_{\theta^\perp}(x-x_0)-b(t),
\end{equation*}
where $b(t)=b_{j-1}+(t-t_{j-1})\gamma_j$ on $[t_{j-1},t_j]$ and $\Pi_{\theta^\perp}$ denotes the  $L^2$-projection onto $\theta^\perp$. Thus $\Phi$ is bi-Lipschitz onto its image. We write
\begin{equation*}
\mathcal B:=\bigl(D;\,(\gamma_1,h_1),\cdots,(\gamma_N,h_N)\bigr),
\end{equation*}
for these ordered polygonal cylinder data.

We use a half-open convention only to assign each internal section to exactly
one segment. For $j=1,\cdots,N-1$, set
\begin{equation*}
P_j:=\{\Phi(y,t):y\in\operatorname{int}D,~t_{j-1}<t\leq t_j\},
\end{equation*}
and for the final segment set
\begin{equation*}
P_N:=\{\Phi(y,t):y\in\operatorname{int}D,~t_{N-1}<t<t_N\}.
\end{equation*}
Consequently,
\begin{equation}\label{eq:broken_layer_domain_main}
\Omega_{\mathcal B}:=\bigcup_{j=1}^N P_j=\Phi\bigl(\operatorname{int}D\times(0,t_N)\bigr).
\end{equation}
The pieces are pairwise disjoint, while their union is an open, connected,
bounded Lipschitz domain. Changing the assignment of the internal sections
$t=t_j$ does not affect any integral identity. Figure~\ref{fig:polygonal_sweep_cylinder} shows the construction in three
dimensions.
\begin{figure}[t]
\centering
\includegraphics[
width=0.72\textwidth,
trim=30 90 30 15,
clip
]{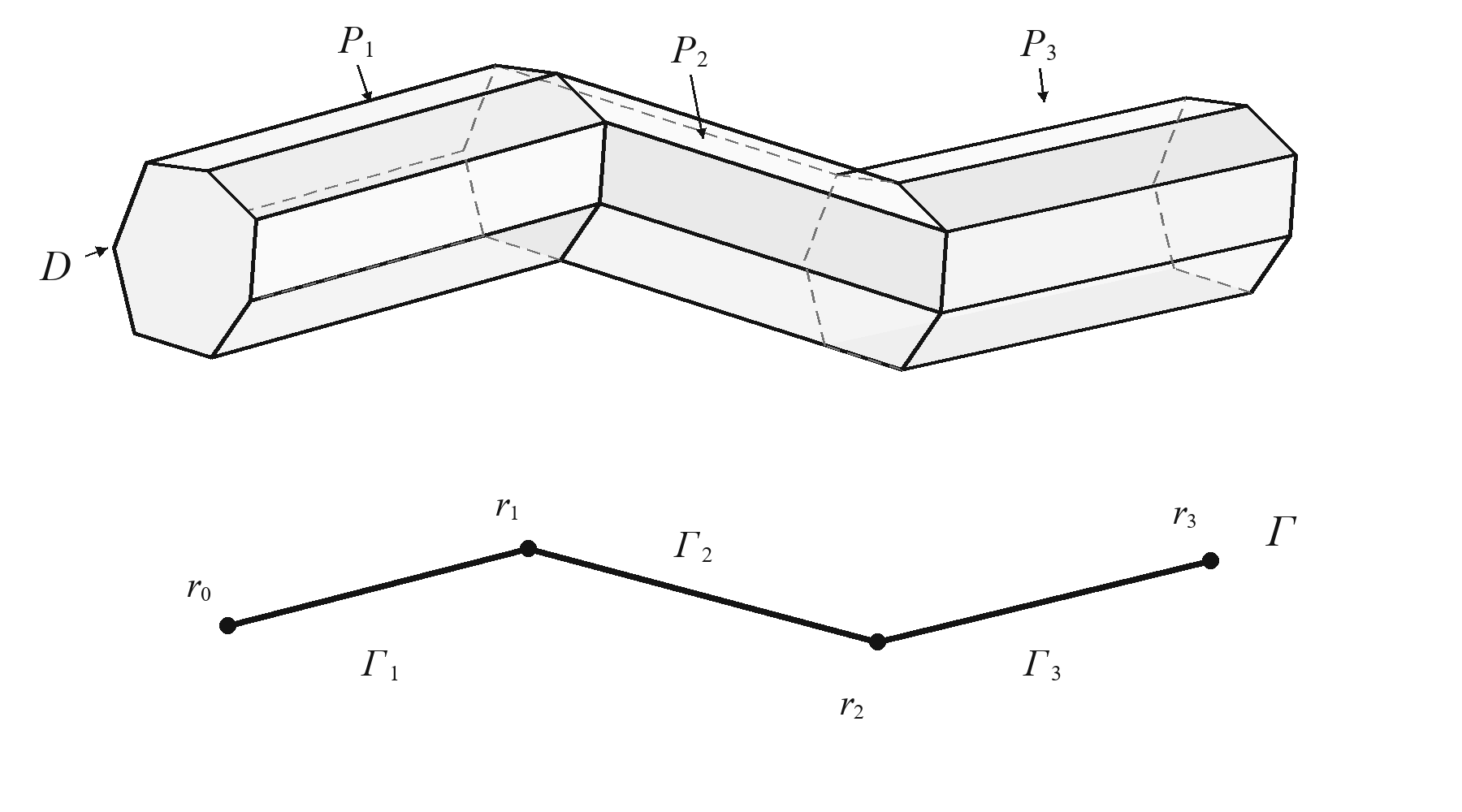}
\caption{Three-dimensional schematic ($n=3$): the four congruent section contours are
translates of the base $D$. The swept pieces are $P_1,P_2,P_3$. The detached
polygonal generatrix is $\Gamma=\Gamma_1\cup\Gamma_2\cup\Gamma_3$, with
$r_j=\Gamma(t_j)$. Solid and dashed lines distinguish visible and hidden
edges; internal sections follow the half-open convention.}
\label{fig:polygonal_sweep_cylinder}
\end{figure}

Let $V(D)$ denote the vertex set of $D$. For
$\omega\in\theta^\perp\cap\mathbb S^{n-1}$ and $v\in V(D)$, define the
complex lower and upper endpoint phases
\begin{equation}\label{eq:lower_upper_height_main}
L_j(v;\omega):=t_{j-1}-i\omega\cdot(b_{j-1}+v),\quad
U_j(v;\omega):=t_j-i\omega\cdot(b_j+v).
\end{equation}
Their real parts are
\begin{equation*}
\operatorname{Re}L_j(v;\omega)=t_{j-1},\quad
\operatorname{Re}U_j(v;\omega)=t_j.
\end{equation*}
Hence the largest upper endpoint among any nonempty collection of segments
belongs to the segment with largest index, and the smallest lower endpoint
belongs to the segment with smallest index. Vertices on the same endpoint
section have equal real part but are separated by their oscillatory phases for
a generic direction $\omega$.

Let $Y:\Omega_{\mathcal B}\to\operatorname{int}D$ denote the transverse
coordinate determined by
\begin{equation*}
\Phi^{-1}(x)=(Y(x),T(x)).
\end{equation*}
The polygonal cylinder geometry leads to the following injectivity statement.

\begin{theorem}\label{thm:main_brion_barvinok_uniqueness}
Let $\Omega_{\mathcal B}$ be given by \eqref{eq:broken_layer_domain_main}, and let
\begin{equation*}
G(x)=\sum_{j=1}^N p_j(Y(x))\chi_{P_j}(x),
\end{equation*}
where each $p_j\in\mathbb C[y_1,\ldots,y_{n-1}]$. If either
\begin{equation}\label{eq:BB_minus_identity_main}
\int_{\Omega_{\mathcal B}} G(x)e^{-ix\cdot\zeta_\theta(\eta)}\,dx=0
\quad\text{or}\quad
\int_{\Omega_{\mathcal B}} G(x)e^{ix\cdot\zeta_\theta(\eta)}\,dx=0,
\quad \eta\in U,
\end{equation}
for a nonempty open $U\Subset\theta^\perp\setminus\{0\}$, then
$p_1\equiv\cdots\equiv p_N\equiv0$, and hence $G=0$ a.e. in
$\Omega_{\mathcal B}$.
\end{theorem}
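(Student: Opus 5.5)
We reduce to the minus identity; the plus case follows by $\eta\mapsto-\eta$ after conjugating coordinates, or by repeating the argument with $i$ replaced by $-i$. Write $\eta=\rho\omega$ with $\rho=|\eta|>0$ and $\omega\in\theta^\perp\cap\mathbb S^{n-1}$, so that
\[
-ix\cdot\zeta_\theta(\eta)=\rho\bigl(t-i\omega\cdot(y+b(t))\bigr)+\text{const}\quad\text{for } x=\Phi(y,t).
\]
The constant comes from $x_0$ and only contributes a nonvanishing factor. The map $\Phi$ has unit Jacobian on each piece, since it is a shear. The transform therefore splits as
\[
\sum_j \int_{t_{j-1}}^{t_j} e^{\rho(1-i\omega\cdot\gamma_j)(t-t_{j-1})+\rho L_j(0;\omega)}\int_D p_j(y)e^{-i\rho\omega\cdot y}\,dy\,dt .
\]
The integral over $D$ is a polynomially weighted exponential integral over a simple polytope. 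By the weighted Brion--Barvinok formula it equals a finite sum $\sum_{v\in V(D)} e^{-i\rho\omega\cdot v}Q_{j,v}(\rho,\omega)$, where $Q_{j,v}$ is rational in $\rho$ and of the form $\rho^{-(n-1)}\times$ polynomial in $1/\rho$. Its coefficients are determined by the derivatives of $p_j$ at $v$ and by the cone generators at $v$, so they are valid for generic $\omega$, namely when $\omega$ is not orthogonal to any edge. The $t$-integral is explicit: it equals $(e^{\rho U_j}-e^{\rho L_j})$, with $v=0$ shifted, divided by $\rho(1-i\omega\cdot\gamma_j)$, and this denominator never vanishes. The result is an exponential-polynomial identity
\[
\sum_j\sum_{v}\Bigl(e^{\rho U_j(v;\omega)}A_{j,v}(\rho,\omega)-e^{\rho L_j(v;\omega)}A_{j,v}(\rho,\omega)\Bigr)=0,
\]
with $A_{j,v}$ rational in $\rho$. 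The identity holds for $\rho$ in an interval and, by analyticity in $\rho$, for all $\rho>0$, once $\omega$ is fixed in the open set of directions of $U$.

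Next comes endpoint separation. Fix a generic $\omega$ in that open set for which all the numbers $\omega\cdot v$, $v\in V(D)$, are distinct and nonzero on edges. The exponents $\rho U_j(v)$ and $\rho L_{j+1}(v)$ coincide, because both equal $t_j-i\omega\cdot(b_j+v)$. So each internal section $t_j$ carries the combined coefficient $A_{j,v}-A_{j+1,v}$. The top section $t_N$ carries only $A_{N,v}$, and the bottom section $t_0$ carries only $-A_{1,v}$. Exponentials $e^{\rho\lambda}$ with pairwise distinct $\lambda\in\mathbb C$ and rational coefficients are linearly independent over rational functions as $\rho\to\infty$: divide by the term with maximal real part, and use the distinct imaginary parts together with almost-periodic averaging over $\rho$ to kill the oscillating terms. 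Hence every coefficient vanishes. This gives $A_{N,v}\equiv0$ for all $v$, and then successively $A_{N-1,v}=A_{N,v}=0$, and so on down to $j=1$. Varying $\omega$ over an open set is then used to conclude that each $A_{j,v}(\cdot,\omega)\equiv0$.

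Finally we pass from vanishing vertex coefficients to $p_j=0$. If all $A_{j,v}$ vanish, then $\int_D p_je^{-i\rho\omega\cdot y}\,dy=0$ for all $\rho>0$ and for $\omega$ in an open set of directions. This is the Fourier transform of the compactly supported $p_j\chi_D$, which is entire, so it vanishes on an open cone and hence identically. Therefore $p_j\chi_D=0$, and $p_j\equiv0$ because $D$ has interior.

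The main obstacle is the independence step. The imaginary parts $\omega\cdot(b_j+v)$ of phases on the same section must be distinct, or the coefficients must be shown to combine correctly when they coincide. We choose $\omega$ outside the finitely many hyperplanes where two vertices of $D$ have equal projection; these hyperplanes are proper because distinct vertices differ. We also need the Brion--Barvinok coefficients to be genuinely rational and nonsingular for that $\omega$. Treating the polynomial weight by differentiation with respect to an auxiliary linear parameter, and taking a generic $\omega$, handles this.
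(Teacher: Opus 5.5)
Your argument is correct, and its skeleton is the paper's: the unit-Jacobian shear on each piece, the weighted Brion--Barvinok expansion, a direction $\omega$ with pairwise distinct vertex projections, and separation by dominant real part plus almost-periodic averaging. The two routes differ in the endpoint bookkeeping and, more substantially, in the final nonvanishing step.

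The paper argues by contradiction with $j_*=\max\{j:p_j\not\equiv0\}$. The section $t_{j_*}$ then carries only vertex terms of $p_{j_*}$, so the coincidence $U_j(v;\omega)=L_{j+1}(v;\omega)$ never arises. It must then know that the Laurent polynomials $R_{j_*,v}(-i\rho\omega)$ are nonzero on the chosen ray. It gets this from the leading homogeneous part $R^{[\kappa]}_{D,p,v}$ (Lemma~\ref{lem:vertex_coefficient_asymptotics}) and the generic-ray nonvanishing of its numerator (Lemmas~\ref{lem:homogeneous_polynomial_physical_ray} and~\ref{lem:finite_family_generic_ray}).

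You instead prove full linear independence, merge the coefficients $A_{j,v}-A_{j+1,v}$ at internal sections, and peel downward to get $A_{j,v}(\cdot,\omega)\equiv0$ on every generic ray. Summing the vertex expansion back gives $\widehat{p_j\chi_D}=0$ on an open cone, and analyticity of the Fourier transform of a compactly supported function finishes. This dispenses with the leading-term and generic-ray lemmas, and it does not care which end dominates. The price is that you need the separation on an open set of directions, whereas the paper's contradiction uses a single well-chosen ray.

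One small correction concerns your first reduction of the plus case, which does not work as written. Complex conjugation and $\eta\mapsto-\eta$ both flip only the oscillatory factor, leaving $e^{-|\eta|\,x\cdot\theta}$ decaying. Either use the reflection $x\mapsto-x$, which gives a polygonal cylinder over $-D$ with the segments in reverse order, or simply rerun your argument, which goes through verbatim for the positive phase.
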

The proof is given in Section~\ref{sec:brion_barvinok_core}. The role of the polynomial assumption is discussed in Remark~\ref{rem:polynomial_not_analytic}.

%%%%%%%%%%%%%%%%%%%%%%%%%%%%%%%%%%%%%%%%%%%%%%%%%%%%%%%%%%%%%	

\subsection{Oblique and polygonal cylinders for PDE applications}
\label{subsec:pde_geometry}

For hyperbolic and parabolic inverse problems, we use the interval case of the
oblique cylinder geometry, allowing an arbitrary distinguished direction. 
Let $E=(0,h)$ in \eqref{eq:general_layer_domain}. Thus every $\theta$-section is the interval $(\mu\cdot y,\mu\cdot y+h)$. If \(D\) is Lipschitz, then
\(\partial\Omega_{\theta,\mu,h}\) is Lipschitz, so the usual trace
operations used below are available on this observation domain. For \(x=x_0+y+s\theta\),
\begin{equation*}
e^{-ix\cdot\zeta_\theta(\xi)}
=e^{-ix_0\cdot\zeta_\theta(\xi)}e^{-i\xi\cdot y}e^{|\xi|s}.
\end{equation*}
Hence this geometry reduces to
Theorem~\ref{thm:main_totally_real_uniqueness}.	

The PDE applications below use either of two observation geometries:
\begin{equation}\label{eq:Omega_sharp_alternatives}
\Omega_\sharp=\Omega_{\theta,\mu,h} \quad\text{or}\quad \Omega_\sharp=\Omega_{\mathcal B}.
\end{equation}
The first is the oblique cylinder \eqref{eq:general_layer_domain} with \(E=(0,h)\); the second is the bounded Lipschitz polygonal cylinder \eqref{eq:broken_layer_domain_main}. The negative phase
\(e^{-ix\cdot\zeta_\theta(\xi)}\), produced by the low-frequency limit in the
hyperbolic and parabolic proofs below, peels off its segments in decreasing
order of their terminal parameters \(t_j\).

Throughout the PDE sections, fix
\begin{equation}\label{eq:pn_condition}
p_n\in(1,\infty),\quad p_n>\max\left\{\frac n2,2\right\}.
\end{equation}
This exponent is used only in the localized elliptic correction, via the
embedding \(W^{2,p_n}(Q)\hookrightarrow L^\infty(Q)\), to allow the whole-space
coefficient to have the jump created by the sharp truncation.

%%%%%%%%%%%%%%%%%%%%%%%%%%%%%%%%%%%%%%%%%%	

\subsection{Hyperbolic passive inverse problem}\label{subsec:main_hyperbolic_passive}
We consider the whole-space acoustic wave equation
\begin{equation}
\begin{cases}
q(x)\partial_t^2u(t,x)-\Delta u(t,x)=0,
& (t,x)\in(0,\infty)\times\mathbb R^n,\\
u(0,x)=f(x),\quad \partial_tu(0,x)=0,
& x\in\mathbb R^n,
\end{cases}
\label{eq:main_hyperbolic_forward}
\end{equation}
where \(q=c^{-2}\). Since the sharp truncation may create jumps, the
time-domain Cauchy trace need not be defined. We therefore use the
Laplace-domain family as the observation. For \(\tau>0\), set
\begin{equation*}
U_\tau(x):=\int_0^\infty e^{-\tau t}u(t,x)\,dt .
\end{equation*}
Then \(U_\tau\in H^2(\mathbb R^n)\) and
\begin{equation}\label{eq:main_hyperbolic_laplace_equation}
(-\Delta+\tau^2q)U_\tau=\tau qf, \quad\text{in }\mathbb R^n.
\end{equation}
The passive Cauchy observation from the single time trace is the family
\begin{equation}\label{eq:main_hyperbolic_laplace_data}
\Lambda_{f,c}^{\Omega_\sharp}(\tau):=(\gamma_DU_\tau,\gamma_NU_\tau),
\quad \tau>0.
\end{equation}
Here and below, boundary pairs lie in $H^{1/2}(\partial\Omega_\sharp)
\times H^{-1/2}(\partial\Omega_\sharp)$. The Dirichlet trace
\(\gamma_DU_\tau\in H^{1/2}(\partial\Omega_\sharp)\) is the usual one.
The weak Neumann trace
\(\gamma_NU_\tau\in H^{-1/2}(\partial\Omega_\sharp)\) is defined by
\begin{equation*}
\langle\gamma_NU_\tau,\gamma_D\varphi\rangle_{\partial\Omega_\sharp}
=\int_{\Omega_\sharp}\bigl(\nabla U_\tau\cdot\nabla\varphi
+(\Delta U_\tau)\varphi\bigr)\,dx,\quad \varphi\in H^1(\Omega_\sharp).
\end{equation*}
Indeed, if \(\gamma_D\varphi=0\), then \(\varphi\in H_0^1(\Omega_\sharp)\), and
approximation by \(C_c^\infty(\Omega_\sharp)\) together with the distributional definition of \(\Delta U_\tau\) makes the integral vanish.

%%%%%%%%%%%%%

\begin{definition}\label{def:hyperbolic_admissibility}
Let \(\Omega_\sharp\) be either domain in \eqref{eq:Omega_sharp_alternatives}. A pair \((f,c)\) is \emph{hyperbolically admissible} relative to \(\Omega_\sharp\) if the following hold.
\begin{enumerate}
\item \textbf{Sharp truncation and positivity.}
There exist \(f_{\mathrm{in}}\in L^2(\Omega_\sharp)\), \(c_{\mathrm{in}}\in L^\infty(\Omega_\sharp)\), and \(0<c_-\le c_+<\infty\) such that
\begin{equation}\label{eq:main_hyperbolic_common_truncation}
f=f_{\mathrm{in}}\chi_{\Omega_\sharp},\quad
c=1+(c_{\mathrm{in}}-1)\chi_{\Omega_\sharp},\quad
c_-\le c_{\mathrm{in}}\le c_+,\quad \text{a.e. in }\Omega_\sharp.
\end{equation}
Thus \(f\) and \(c-1\) share the same sharp cutoff at \(\partial\Omega_\sharp\),
\begin{equation*}
\operatorname*{ess\,supp}f\cup\operatorname*{ess\,supp}(c-1)
\subset\overline{\Omega}_\sharp,
\end{equation*}
and jumps across \(\partial\Omega_\sharp\) are allowed. Equivalently,
\(q\in L^\infty(\mathbb R^n)\), \(0<q_-\le q\le q_+<\infty\), and
\(q=1\) on \(\mathbb R^n\setminus\overline{\Omega_\sharp}\).

\item \textbf{Transverse quotient structure.}
If \(\Omega_\sharp=\Omega_{\theta,\mu,h}\), then there exists
\(F\in L^1(D)\cap L^2(D)\) such that
\begin{equation}\label{eq:main_hyperbolic_quotient_structure_translated}
\frac{f_{\mathrm{in}}(x)}{c_{\mathrm{in}}^2(x)}=F(y),
\quad\text{a.e. }\Omega_{\theta,\mu,h}.
\end{equation}
Hence \(f/c^2=F(y)\chi_{\Omega_{\theta,\mu,h}}\).

If \(\Omega_\sharp=\Omega_{\mathcal B}=\bigcup_{j=1}^N P_j\), then there exist
transverse polynomials \(F_j\in\mathbb C[y_1,\dots,y_m]\) such that
\begin{equation}\label{eq:main_hyperbolic_quotient_structure_broken}
\frac{f_{\mathrm{in}}(x)}{c_{\mathrm{in}}^2(x)}
=\sum_{j=1}^N F_j(Y(x))\chi_{P_j}(x),\quad\text{a.e. } \Omega_{\mathcal B}.
\end{equation}
Thus \(f/c^2=\sum_{j=1}^N F_j(Y(x))\chi_{P_j}(x)\), compactly supported, with
jumps allowed at the outer boundary and internal polygonal sections.

\item \textbf{Mild forward solution and local decay.}
Let \(u\in C([0,\infty);L^2(\mathbb R^n))\) be the mild solution of
\eqref{eq:main_hyperbolic_forward}. Assume there exist \(C>0\) and a
nonnegative measurable \(\eta\in L^1(0,\infty)\) with \(\eta(t)\to0\) as
\(t\to\infty\), such that
\begin{equation*}
\|u(t,\cdot)\|_{L^2(\Omega_\sharp)}\le C\eta(t)\|f\|_{L^2(\mathbb R^n)},\quad t>0.
\end{equation*}
\end{enumerate}
\end{definition}

%%%

\begin{remark}\label{rem:hyperbolic_forward_wellposedness}
Let \(A_q\) be the nonnegative self-adjoint operator in
\(L^2(\mathbb R^n,q\,dx)\) associated with the closed form
\begin{equation*}
\mathfrak a(v,w)=\int_{\mathbb R^n}\nabla v\cdot\nabla\overline w\,dx,
\quad\mathcal D(\mathfrak a)=H^1(\mathbb R^n).
\end{equation*}
Since \(q\) is bounded above and below, the weighted \(L^2\)-norm is
equivalent to the usual one, so \(\mathfrak a\) is densely defined, closed,
and nonnegative. Hence the representation theorem defines \(A_q\).

We claim that $\mathcal D(A_q)=H^2(\mathbb R^n)$, $A_qv=-q^{-1}\Delta v$.
Indeed, \(v\in\mathcal D(A_q)\) if and only if some \(g\in L^2(\mathbb R^n)\)
satisfies
\begin{equation*}
\int_{\mathbb R^n}\nabla v\cdot\nabla\overline w\,dx
=\int_{\mathbb R^n}qg\overline w\,dx, \quad\forall\,w\in H^1(\mathbb R^n).
\end{equation*}
This means \(-\Delta v=qg\) distributionally. Since \(qg\in L^2(\mathbb R^n)\),
the Fourier multiplier estimate gives \(v\in H^2(\mathbb R^n)\). Conversely,
if \(v\in H^2(\mathbb R^n)\), then \(g=-q^{-1}\Delta v\in L^2(\mathbb R^n)\)
and the identity holds. The bounds on \(q\), together with
\(\|v\|_{H^2}\le C(\|\Delta v\|_{L^2}+\|v\|_{L^2})\), make the norm
equivalent to the \(H^2\)-norm.

For \(f\in L^2\), the spectral theorem gives the mild solution
\begin{equation*}
u(t)=\cos(tA_q^{1/2})f.
\end{equation*}
Moreover,
\begin{equation*}
U_\tau=\tau(\tau^2+A_q)^{-1}f\in H^2(\mathbb R^n),
\end{equation*}
which proves \eqref{eq:main_hyperbolic_laplace_equation}. Hence the Dirichlet
and weak Neumann traces in \eqref{eq:main_hyperbolic_laplace_data} lie in the
stated boundary spaces. This remains true even though the time-domain solution
is generated by a discontinuous \(L^2\) initial state. If classical
time-domain traces exist, their Laplace transforms agree with this definition.
\end{remark}

%%%%	

\begin{remark}\label{rem:why_laplace_data}
The use of \(U_\tau\), and of \(U_\lambda\) in the parabolic problem below,
is dictated by the regularity of the forward solution. Under the present
assumptions, a mild solution is only known to lie in
\(C([0,\infty);L^2(\mathbb R^n))\), which does not define the Dirichlet or
Neumann trace of \(u(t,\cdot)\) on \(\partial\Omega_\sharp\). In contrast, the
resolvent formulas place every Laplace transform in \(H^2(\mathbb R^n)\), so
its elliptic Cauchy traces are well defined. Thus the Laplace-domain family is
a rigorous representation of the passive observation at the regularity allowed
by the sharp cutoff.

If higher regularity is imposed, the measurement can be stated directly in the
time domain. For example, if the solution is integrable with values in
\(H^2(\mathbb R^n)\) after multiplication by the relevant exponential weight,
then the boundedness of the trace maps and the Bochner integral give
\begin{equation*}
\gamma_DU_\tau=\int_0^\infty e^{-\tau t}\gamma_Du(t)\,dt,
\quad \gamma_NU_\tau=\int_0^\infty e^{-\tau t}\gamma_Nu(t)\,dt.
\end{equation*}
The same holds with \(e^{-\lambda t}\) for the heat equation. Hence equality
of the full time-domain Cauchy traces implies equality of the Laplace-domain
data. This is an exact-data statement; it does not provide stability for
inversion of the Laplace transform.
\end{remark}

%%%

\begin{remark}\label{rem:dirichlet_only_observation}
Only the Dirichlet component of \eqref{eq:main_hyperbolic_laplace_data} is needed. For \(\tau>0\), \eqref{eq:main_hyperbolic_common_truncation} gives \(f=0\) and \(q=1\) in \(\mathbb R^n\setminus\overline{\Omega}_\sharp\), hence
\begin{equation*}
(-\Delta+\tau^2)U_\tau=0,\quad\text{in }\mathbb R^n\setminus\overline{\Omega}_\sharp.
\end{equation*}
Uniqueness follows from an \(H^1\) energy argument. Suppose
\(Z\in H^1(\mathbb R^n\setminus\overline{\Omega}_\sharp)\) solves
\((-\Delta+\tau^2)Z=0\) and has zero Dirichlet trace. Then
\(Z\in H_0^1(\mathbb R^n\setminus\overline{\Omega}_\sharp)\). Testing the weak
equation with \(\overline Z\) and taking real parts gives
\begin{equation*}
\int_{\mathbb R^n\setminus\overline{\Omega}_\sharp}
\bigl(|\nabla Z|^2+\tau^2|Z|^2\bigr)\,dx=0,
\end{equation*}
so \(Z=0\). The same holds on each connected component of the complement.
Thus the exterior \(H^1\) solution is determined by its Dirichlet trace, and
the exterior Dirichlet-to-Neumann map determines its weak Neumann trace. On
the inner boundary, the exterior unit normal is the negative of the unit
normal for \(\Omega_\sharp\), fixing \(\gamma_NU_\tau\) according to
\eqref{eq:main_hyperbolic_laplace_data}.

If the time-domain Dirichlet trace exists with the weighted integrability of
Remark~\ref{rem:why_laplace_data}, it determines \(\gamma_DU_\tau\) for every
\(\tau>0\), and the exterior argument then determines the full Laplace-domain
Cauchy data. At the present \(L^2\) regularity, the Laplace-domain Dirichlet
family is taken as the observation. We do not claim unconditional equivalence
with a time-domain trace or stability of inverse Laplace transformation; see
\cite{knox2020determining}.
\end{remark}

%%%

\begin{theorem}\label{thm:main_hyperbolic_quotient_uniqueness}
Let \(\Omega_\sharp\) be either an oblique cylinder
\(\Omega_{\theta,\mu,h}\) or a bounded Lipschitz polygonal cylinder
\(\Omega_{\mathcal B}\). Let \((f_\iota,c_\iota)\), \(\iota=1,2\), be two hyperbolically
admissible pairs relative to the same \(\Omega_\sharp\), and assume
\begin{equation*}
\Lambda_{f_1,c_1}^{\Omega_\sharp}(\tau)=\Lambda_{f_2,c_2}^{\Omega_\sharp}(\tau),
\quad\forall\,\tau>0.
\end{equation*}
Then
\begin{equation*}
\frac{f_1}{c_1^2}=\frac{f_2}{c_2^2},\quad\text{a.e. }\mathbb R^n.
\end{equation*}
\end{theorem}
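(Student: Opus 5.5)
The plan is to reduce Theorem~\ref{thm:main_hyperbolic_quotient_uniqueness} to the transform injectivity results, Theorem~\ref{thm:main_totally_real_uniqueness} and Theorem~\ref{thm:main_brion_barvinok_uniqueness}. The route is a Green identity for the Laplace-domain equation \eqref{eq:main_hyperbolic_laplace_equation} tested against a harmonic exponential, followed by a low-frequency limit \(\tau\to0\).

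First, set \(W_\tau:=U^{(1)}_\tau-U^{(2)}_\tau\). By Remark~\ref{rem:dirichlet_only_observation}, equality of the data makes \(W_\tau\) vanish identically outside \(\overline{\Omega}_\sharp\), since both exterior solutions share the same Dirichlet trace. Hence \(W_\tau\in H^2(\mathbb R^n)\) has zero Cauchy data on \(\partial\Omega_\sharp\).

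Next, take \(\varphi(x)=e^{-ix\cdot\zeta_\theta(\xi)}\), which is harmonic because \(\zeta_\theta\cdot\zeta_\theta=0\). The weak Green formula then gives
\begin{equation*}
\int_{\Omega_\sharp}(-\Delta W_\tau)\varphi\,dx=0 .
\end{equation*}
Using \eqref{eq:main_hyperbolic_laplace_equation} for each \(\iota\), this becomes
\begin{equation*}
\tau\int_{\Omega_\sharp}(q_1f_1-q_2f_2)\varphi\,dx
=\tau^2\int_{\Omega_\sharp}(q_1U^{(1)}_\tau-q_2U^{(2)}_\tau)\varphi\,dx .
\end{equation*}
Note that \(q_\iota f_\iota=f_\iota/c_\iota^2\) is exactly the quotient. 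Dividing by \(\tau\) yields
\begin{equation*}
\int_{\Omega_\sharp}\Bigl(\frac{f_1}{c_1^2}-\frac{f_2}{c_2^2}\Bigr)\varphi\,dx
=\tau\int_{\Omega_\sharp}(q_1U^{(1)}_\tau-q_2U^{(2)}_\tau)\varphi\,dx .
\end{equation*}
Since \(\Omega_\sharp\) is bounded, \(\varphi\) is bounded on it for fixed \(\xi\).

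The main step is to show that the right-hand side tends to \(0\) as \(\tau\to0^+\). The local decay hypothesis (iii) does this: \(\eta\in L^1(0,\infty)\) gives
\begin{equation*}
\|U_\tau\|_{L^2(\Omega_\sharp)}\le\int_0^\infty e^{-\tau t}\|u(t)\|_{L^2(\Omega_\sharp)}\,dt\le C\|\eta\|_{L^1}\|f\|_{L^2},
\end{equation*}
uniformly in \(\tau\). Multiplying by \(\tau\) sends the term to zero. This is where the real content lies. Without decay, \(U_\tau\) could blow up like \(\tau^{-1}\), since \(A_q\) has spectrum down to \(0\) and there is no spectral gap. The paper's ``localized elliptic corrector'' with \(p_n\) suggests the authors instead control \(W_\tau\) through an elliptic problem on a box \(Q\) with a \(W^{2,p_n}\) estimate. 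I would try the direct Bochner-integral bound first, checking that the Laplace transform commutes with restriction to \(\Omega_\sharp\) for the mild solution. If justifying the Green identity at \(H^2\) regularity with the sharp jump of \(q\) proves delicate, I would fall back on a corrector.

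In the limit, \(G:=f_1/c_1^2-f_2/c_2^2\) satisfies \eqref{eq:main_TR_identity_minus}, respectively the first identity in \eqref{eq:BB_minus_identity_main}, for all \(\xi\in U\Subset\theta^\perp\setminus\{0\}\). In the oblique case \(G=(F^{(1)}-F^{(2)})(y)\chi_{\Omega_{\theta,\mu,h}}\) with \(F^{(1)}-F^{(2)}\in L^1(D)\), and Theorem~\ref{thm:main_totally_real_uniqueness} gives \(G=0\). In the polygonal case \(G=\sum_j(F^{(1)}_j-F^{(2)}_j)(Y)\chi_{P_j}\) is piecewise polynomial, and Theorem~\ref{thm:main_brion_barvinok_uniqueness} gives \(G=0\). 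Outside \(\Omega_\sharp\) both quotients vanish because \(f_\iota=0\) there. This gives equality a.e. in \(\mathbb R^n\).
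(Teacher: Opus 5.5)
Your proposal is correct, and it reaches the key identity by a simpler route than the paper.

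\textbf{The paper's route.} The paper applies $(-\Delta+\tau^2q_1)$ to $V_\tau$ and tests against the corrected exponential $w_{\tau,\xi}=e^{-ix\cdot\zeta_\theta(\xi)}(1+\psi_{\tau,\xi})$. This is an exact solution of $(-\Delta+\tau^2q_1)w=0$ in a bounded smooth domain $Q\Supset\overline{\Omega}_\sharp$. Building $\psi_{\tau,\xi}$ needs two ingredients:
\begin{enumerate}
\item the conjugated Dirichlet right inverse of Lemma~\ref{lem:uniform_shifted_lattice_inverse};
\item a contraction in $W^{2,p_n}(Q)$ with $p_n>\max\{n/2,2\}$, from Lemma~\ref{lem:hyperbolic_corrector}.
\end{enumerate}
It also produces an extra $O(\tau^2)$ error term.

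\textbf{Your route.} You apply only $-\Delta$ to $W_\tau$ and test against the exact harmonic exponential $\varphi$. This leaves the remainder $\tau\int_{\Omega_\sharp}(q_1U^{(1)}_\tau-q_2U^{(2)}_\tau)\varphi\,dx$. It is $O(\tau)$ because hypothesis (3) of Definition~\ref{def:hyperbolic_admissibility} holds for both admissible pairs, so both $\|U^{(\iota)}_\tau\|_{L^2(\Omega_\sharp)}$ are bounded uniformly in $\tau$.

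Your worry about the Green step is unfounded, so no corrector fallback is needed. $W_\tau\in H^2(\mathbb R^n)$ vanishes outside $\overline{\Omega}_\sharp$. Hence integrating by parts against the smooth harmonic $\varphi$ over any ball containing $\overline{\Omega}_\sharp$ is immediate. The jump of $q$ enters only through $\Delta W_\tau\in L^2$.

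\textbf{What each approach buys.} The paper's corrector economizes on the decay hypothesis. By absorbing the term $\tau^2q_1V_\tau$, it uses decay only for $u_2$, and only against the coefficient difference $(q_2-q_1)U_{2,\tau}$. In particular, if $q_1=q_2$, no decay is needed at all. Your argument uses the decay of both solutions, which the theorem assumes anyway. In exchange it dispenses with $Q$, the exponent $p_n$, and the whole elliptic corrector construction.
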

The conclusion is equality of the compactly supported quotient \(qf\), not
separate recovery of \(f\) and \(c\). Separate recovery requires additional
assumptions; see, for instance, \cite{chen2025determining}.

%%%%%%%%%%%%%%%%%%%%%%%%%%%%%%%%%%%%%%%%%%%%%%%%%%

\subsection{Parabolic passive inverse problem}\label{subsec:main_parabolic_passive}

We consider
\begin{equation*}
\begin{cases}
q(x)\partial_tu(t,x)-\Delta u(t,x)=0,& (t,x)\in(0,\infty)\times\mathbb R^n,\\
u(0,x)=f(x),& x\in\mathbb R^n,
\end{cases}
\end{equation*}
where \(q=\kappa^{-1}\). For \(\lambda>0\), set
\begin{equation*}
U_\lambda(x):=\int_0^\infty e^{-\lambda t}u(t,x)\,dt.
\end{equation*}
Then
\begin{equation}\label{eq:main_parabolic_laplace_equation}
(-\Delta+\lambda q)U_\lambda=qf,\quad\text{in }\mathbb R^n,
\end{equation}
and the passive Cauchy observation is the family
\begin{equation}\label{eq:main_parabolic_laplace_data}
\mathcal P_{f,\kappa}^{\Omega_\sharp}(\lambda)
:=(\gamma_DU_\lambda,\gamma_NU_\lambda),\quad \lambda>0.
\end{equation}
This pair lies in
\(H^{1/2}(\partial\Omega_\sharp)\times
H^{-1/2}(\partial\Omega_\sharp)\), with the same weak Neumann trace
definition as in the hyperbolic problem.

\begin{definition}\label{def:parabolic_admissibility}
A pair \((f,\kappa)\) is \emph{parabolically admissible} relative to
\(\Omega_\sharp\) if the following hold.
\begin{enumerate}
\item \textbf{Sharp truncation and positivity.}
There exist \(f_{\mathrm{in}}\in L^2(\Omega_\sharp)\) and
\(\kappa_{\mathrm{in}}\in L^\infty(\Omega_\sharp)\) with
\(0<\kappa_-\le\kappa_{\mathrm{in}}\le\kappa_+<\infty\) such that
\begin{equation*}
f=f_{\mathrm{in}}\chi_{\Omega_\sharp},\quad \kappa=1+(\kappa_{\mathrm{in}}-1)\chi_{\Omega_\sharp}.
\end{equation*}
Thus \(f\) and \(\kappa-1\) are supported in
\(\overline{\Omega}_\sharp\) and may jump at
\(\partial\Omega_\sharp\). Equivalently, \(q\in L^\infty\)
satisfies \(0<q_-\le q\le q_+<\infty\) and
\(q=1\) outside \(\overline{\Omega}_\sharp\).

\item \textbf{Transverse quotient structure.}
If \(\Omega_\sharp=\Omega_{\theta,\mu,h}\), then
\begin{equation}\label{eq:main_parabolic_quotient_structure_translated}
\frac{f_{\mathrm{in}}(x)}{\kappa_{\mathrm{in}}(x)}=F(y),
\quad F\in L^1(D)\cap L^2(D), \quad \mbox{a.e. }\Omega_{\theta,\mu,h}.
\end{equation}
Hence $f/\kappa=F(y)\chi_{\Omega_{\theta,\mu,h}}$.

If \(\Omega_\sharp=\Omega_{\mathcal B}\), then there exist transverse
polynomials \(F_j\) such that
\begin{equation}\label{eq:main_parabolic_quotient_structure_broken}
\frac{f_{\mathrm{in}}(x)}{\kappa_{\mathrm{in}}(x)}
=\sum_{j=1}^N F_j(Y(x))\chi_{P_j}(x),
\quad\text{a.e. }\Omega_{\mathcal B}.
\end{equation}
Thus
\begin{equation*}
f/\kappa=\sum_{j=1}^N F_j(Y(x))\chi_{P_j}(x), \quad\text{in }\mathbb R^n.
\end{equation*}

\item \textbf{Mild solution, local boundedness and decay.}
Let \(u(t)=e^{-tA_q}f\) be the mild whole-space solution. Assume
\begin{equation*}
\sup_{t\ge0}\|u(t,\cdot)\|_{L^2(\Omega_\sharp)}<\infty,
\quad \|u(t,\cdot)\|_{L^2(\Omega_\sharp)}\to 0,
\quad\text{as }t\to\infty.
\end{equation*}
\end{enumerate}
\end{definition}

\begin{remark}\label{rem:parabolic_forward_wellposedness}
With \(A_q\) as above, the analytic semigroup \(e^{-tA_q}\) is well defined on
\(L^2(\mathbb R^n)\), and its resolvent gives
\begin{equation*}
U_\lambda=(\lambda+A_q)^{-1}f\in H^2(\mathbb R^n),
\end{equation*}
which is \eqref{eq:main_parabolic_laplace_equation}. Hence the Dirichlet and
weak Neumann traces in \eqref{eq:main_parabolic_laplace_data} lie in the stated
boundary spaces. This formulation uses the \(t>0\) smoothing while allowing a
sharply truncated initial state to lie only in \(L^2\).
\end{remark}

\begin{theorem}\label{thm:main_parabolic_quotient_uniqueness}
Let \(\Omega_\sharp\) be either an oblique cylinder or a polygonal cylinder, and let \((f_\iota,\kappa_\iota)\), \(\iota=1,2\), be parabolically admissible pairs relative to \(\Omega_\sharp\). If
\begin{equation*}
\mathcal P_{f_1,\kappa_1}^{\Omega_\sharp}(\lambda)
=\mathcal P_{f_2,\kappa_2}^{\Omega_\sharp}(\lambda),
\quad\forall\,\lambda>0,
\end{equation*}
then
\begin{equation*}
\frac{f_1}{\kappa_1}=\frac{f_2}{\kappa_2},\quad\text{a.e. }\mathbb R^n.
\end{equation*}
\end{theorem}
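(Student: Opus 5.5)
The plan is to follow the low-frequency Laplace-domain argument that the paper uses for Theorem~\ref{thm:main_hyperbolic_quotient_uniqueness}, adapted to the parabolic resolvent. Set $w_\lambda:=U^{(1)}_\lambda-U^{(2)}_\lambda$. By \eqref{eq:main_parabolic_laplace_equation},
\begin{equation*}
-\Delta w_\lambda=q_1f_1-q_2f_2-\lambda\bigl(q_1U^{(1)}_\lambda-q_2U^{(2)}_\lambda\bigr),\quad\text{in }\mathbb R^n .
\end{equation*}
Since $q_\iota f_\iota=f_\iota/\kappa_\iota$, the first difference is exactly $G:=f_1/\kappa_1-f_2/\kappa_2$. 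By the sharp truncation, $G$ is supported in $\overline\Omega_\sharp$ and has the structure required by Theorem~\ref{thm:main_totally_real_uniqueness} or Theorem~\ref{thm:main_brion_barvinok_uniqueness}.

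Equality of the Cauchy data gives $\gamma_Dw_\lambda=0$ and $\gamma_Nw_\lambda=0$ on $\partial\Omega_\sharp$. Pair the equation with the harmonic exponential $\varphi(x)=e^{-ix\cdot\zeta_\theta(\eta)}$, which satisfies $\Delta\varphi=0$ because $\zeta_\theta\cdot\zeta_\theta=0$. Green's identity in the weak-trace form, applied on the bounded Lipschitz domain $\Omega_\sharp$ with $w_\lambda\in H^2$, kills the boundary terms and yields
\begin{equation*}
\int_{\Omega_\sharp}G\varphi\,dx=\lambda\int_{\Omega_\sharp}\bigl(q_1U^{(1)}_\lambda-q_2U^{(2)}_\lambda\bigr)\varphi\,dx .
\end{equation*}
No elliptic corrector should be needed here, because all integrals are over $\Omega_\sharp$ and use only the Cauchy data.

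The key step is to show that the right-hand side tends to $0$ as $\lambda\to0^+$. Since $\varphi$ is bounded on the bounded set $\Omega_\sharp$ and $q_\iota\le q_+$, it suffices to show $\lambda\|U^{(\iota)}_\lambda\|_{L^2(\Omega_\sharp)}\to0$. This is an Abelian argument. Write
\begin{equation*}
\lambda U_\lambda=\int_0^\infty\lambda e^{-\lambda t}u(t)\,dt
\end{equation*}
as a Bochner integral in $L^2(\Omega_\sharp)$, and split it at a large time $T_0$. On $(0,T_0)$, the uniform bound $\sup_t\|u(t)\|_{L^2(\Omega_\sharp)}<\infty$ gives a contribution of order $\lambda T_0$. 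On $(T_0,\infty)$, the norm is at most $\sup_{t\ge T_0}\|u(t)\|_{L^2(\Omega_\sharp)}$, which is small by the decay assumption. Choosing first $T_0$ and then letting $\lambda\to0$ gives the limit.

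This is the main obstacle. One must justify that the restriction to $\Omega_\sharp$ commutes with the Bochner integral, and that the Laplace integral converges in $L^2(\Omega_\sharp)$ and agrees with the resolvent formula $U_\lambda=(\lambda+A_q)^{-1}f$. Both hold because the semigroup is contractive on $L^2(\mathbb R^n,q\,dx)$ and restriction is bounded. Only the local, rather than global, decay hypothesis is needed, which is exactly why Definition~\ref{def:parabolic_admissibility}(3) is stated that way.

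Letting $\lambda\to0$ then gives $\int_{\Omega_\sharp}G e^{-ix\cdot\zeta_\theta(\eta)}dx=0$ for every $\eta$ in any $U\Subset\theta^\perp\setminus\{0\}$. In the oblique case, $G=(F_1-F_2)(y)\chi_{\Omega_{\theta,\mu,h}}$ with $F_1-F_2\in L^1(D)$, so Theorem~\ref{thm:main_totally_real_uniqueness} with $E=(0,h)$ gives $G=0$. In the polygonal case, $G=\sum_j(F^{(1)}_j-F^{(2)}_j)(Y)\chi_{P_j}$, and Theorem~\ref{thm:main_brion_barvinok_uniqueness} gives $G=0$. Outside $\overline\Omega_\sharp$ both quotients vanish, since $f_\iota=0$ there. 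The boundary $\partial\Omega_\sharp$ is Lipschitz and hence has measure zero. Therefore $f_1/\kappa_1=f_2/\kappa_2$ a.e. in $\mathbb R^n$.
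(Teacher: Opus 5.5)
Your argument is correct, and it departs from the paper at the one step where the paper does real work.

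\textbf{The paper's route.} The paper does not test \(V_\lambda=U_{1,\lambda}-U_{2,\lambda}\) against the bare harmonic exponential. It uses the corrected solution \(w_{\lambda,\xi}=e^{-ix\cdot\zeta_\theta(\xi)}(1+\psi_{\tau,\xi})\) of \((-\Delta+\lambda q_1)w=0\) on an auxiliary smooth domain \(Q\) with \(\overline\Omega_\sharp\Subset Q\). This solution is built from the \(W^{2,p_n}\) right inverse and the contraction argument of Lemmas~\ref{lem:uniform_shifted_lattice_inverse} and~\ref{lem:hyperbolic_corrector}. The paper then shows \(V_\lambda=0\) outside \(\overline\Omega_\sharp\) by the exterior energy argument and applies Green's identity on \(Q\). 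The remainder is \(-\lambda\int(q_2-q_1)U_{2,\lambda}w_{\lambda,\xi}\,dx\) plus a corrector error of size \(O(\lambda)\|G\|_{L^1}\).

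\textbf{Your route.} You pair \(-\Delta w_\lambda\) with \(\varphi=e^{-ix\cdot\zeta_\theta(\eta)}\) directly on \(\Omega_\sharp\). This is legitimate: \(\gamma_Nw_\lambda=0\) gives \(\int_{\Omega_\sharp}(\Delta w_\lambda)\varphi\,dx=-\int_{\Omega_\sharp}\nabla w_\lambda\cdot\nabla\varphi\,dx\). Also \(\gamma_Dw_\lambda=0\) puts \(w_\lambda\in H_0^1(\Omega_\sharp)\), so the latter integral equals \(-\int_{\Omega_\sharp} w_\lambda\Delta\varphi\,dx=0\). Your remainder \(\lambda\int(q_1U^{(1)}_\lambda-q_2U^{(2)}_\lambda)\varphi\,dx\) is then killed by the same Abel-mean argument the paper uses.

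\textbf{Trade-off.} The price is that you need the local boundedness and decay of Definition~\ref{def:parabolic_admissibility}(3) for both solutions. The paper's corrector absorbs the \(\lambda q_1U_{1,\lambda}\) term and so uses decay of \(u_2\) only. Since both pairs are assumed admissible, this costs nothing here. The gain is that the corrector lemmas, the exponent \(p_n\), the domain \(Q\), and the exterior uniqueness step all become unnecessary, and no estimate uniform in \(\eta\) is required.

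The same simplification applies verbatim to the hyperbolic theorem. There the remainder is \(\tau\int(q_1U_{1,\tau}-q_2U_{2,\tau})\varphi\,dx=O(\tau)\).
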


%%%%%%%%%%%%%%%%%%%%%%%%%%%%%%%%%%%%%%%%	

\subsection{Loss of uniqueness for a finite observation time}
\label{subsec:main_finite_time}

The preceding uniqueness theorems use the full time trace via all its Laplace
transforms. We now stop at a fixed time \(T>0\) and show that the missing tail
appears as a terminal-state residual.

Let
\begin{equation*}
\mathcal H(\Omega_\sharp):=\bigl\{v|_{\overline{\Omega}_\sharp}:
v\in C^\infty(U;\mathbb C),\ \Delta v=0 \text{ in } U,
\text{ for some open } U\supset\overline{\Omega}_\sharp\bigr\}.
\end{equation*}
Assume $u_\iota\in C([0,T];L^2(\Omega_\sharp))$, $\iota=1,2$. This does not define the time-domain Cauchy traces of \(u_\iota\). We therefore 
formulate finite-time data through a generalized Green functional. Set
\begin{equation*}
H_\Delta(\Omega_\sharp):=\{w\in L^2(\Omega_\sharp):\Delta w\in L^2(\Omega_\sharp)\},
\end{equation*}
with the norm
\begin{equation*}
\|w\|_{H_\Delta(\Omega_\sharp)}:=\left(\|w\|_{L^2(\Omega_\sharp)}^2
+\|\Delta w\|_{L^2(\Omega_\sharp)}^2\right)^{1/2}.
\end{equation*}
Denote by \(H_0^2(\Omega_\sharp)\) the closure of \(C_c^\infty(\Omega_\sharp)\) with respect to the \(H^2\)-norm, and set \(\mathfrak T(\Omega_\sharp):=H^2(\Omega_\sharp)/H_0^2(\Omega_\sharp)\).  Write \([\phi]=\phi+H_0^2(\Omega_\sharp)\), with quotient norm
\begin{equation*}
\|[\phi]\|_{\mathfrak T(\Omega_\sharp)}:=\inf_{\psi\in H_0^2(\Omega_\sharp)}
\|\phi+\psi\|_{H^2(\Omega_\sharp)}.
\end{equation*}
For \(w\in H_\Delta(\Omega_\sharp)\), define
\begin{equation}\label{eq:generalized_green_functional}
\mathcal G_w([\phi]):=\int_{\Omega_\sharp}
\bigl((\Delta w)\phi-w\Delta\phi\bigr)\,dx,\quad [\phi]\in\mathfrak T(\Omega_\sharp).
\end{equation}
This is well defined and continuous on \(\mathfrak T(\Omega_\sharp)\).
Indeed, for \(\psi\in H_0^2(\Omega_\sharp)\), approximation by
\(C_c^\infty(\Omega_\sharp)\) and the distributional definition of
\(\Delta w\) give
\begin{equation*}
\int_{\Omega_\sharp}(\Delta w)\psi\,dx=\int_{\Omega_\sharp}w\Delta\psi\,dx.
\end{equation*}
Thus \eqref{eq:generalized_green_functional} depends only on \([\phi]\).
Also, Cauchy--Schwarz and the fixed \(H^2\)-norm give
\begin{equation*}
|\mathcal G_w([\phi])|\le C_n\bigl(\|w\|_{L^2(\Omega_\sharp)}
+\|\Delta w\|_{L^2(\Omega_\sharp)}\bigr)\|\phi+\psi\|_{H^2(\Omega_\sharp)}
\end{equation*}
for all \(\psi\in H_0^2(\Omega_\sharp)\). The constant \(C_n\) depends only
on the fixed \(H^2\)-norm, not on \(w,\phi,\psi\).
Taking the infimum over all representatives of \([\phi]\) gives
\begin{equation*}
\|\mathcal G_w\|_{\mathfrak T(\Omega_\sharp)^*}
\le C_n\bigl(\|w\|_{L^2(\Omega_\sharp)}+\|\Delta w\|_{L^2(\Omega_\sharp)}\bigr).
\end{equation*}
So \(w\mapsto\mathcal G_w\) is continuous from
\(H_\Delta(\Omega_\sharp)\) to \(\mathfrak T(\Omega_\sharp)^*\). If \(w\)
and \(\phi\) are smooth up to the boundary, the second Green identity gives
\begin{equation*}
\mathcal G_w([\phi])=\int_{\partial\Omega_\sharp}
\bigl(\partial_\nu w\,\phi-w\partial_\nu\phi\bigr)\,dS.
\end{equation*}

Define
\begin{equation}\label{eq:WjT}
\eta(t)=
\begin{cases}
T-t, & \text{wave case},\\
1,   & \text{heat case},
\end{cases}
\quad
W_\iota^T(x):=\int_0^T \eta(t)u_\iota(t,x)\,dt.
\end{equation}
By Lemma~\ref{lem:finite_time_endpoint_identity},
\begin{equation*}
\Delta W_\iota^T=q_\iota\bigl(u_\iota(T)-f_\iota\bigr),
\end{equation*}
in both cases. Hence \(W_\iota^T,\Delta W_\iota^T\)\(\in L^2(\Omega_\sharp)\), and the
weighted generalized finite-time boundary datum is
\begin{equation*}
\mathcal M_\iota^T:=\mathcal G_{W_\iota^T}
\in\mathfrak T(\Omega_\sharp)^*.
\end{equation*}
Suppose \(a_T u_\iota \in L^1(0,T;H^2(\Omega_\sharp))\). Then
\(W_\iota^T\)\(\in H^2(\Omega_\sharp)\). Since the Dirichlet and weak Neumann trace
maps are bounded on \(H^2(\Omega_\sharp)\), they commute with the Bochner
integral. With the bilinear pairing $\langle\cdot,\cdot\rangle_{-1/2,1/2}:H^{-1/2}(\partial\Omega_\sharp)\times
H^{1/2}(\partial\Omega_\sharp)\to\mathbb C$ (no conjugation), the weak Green identity gives
\begin{equation*}
\mathcal M_\iota^T([\phi])=\int_0^T \eta(t)\Big(
\langle\gamma_Nu_\iota(t),\gamma_D\phi\rangle_{-1/2,1/2}
-\langle\gamma_N\phi,\gamma_Du_\iota(t)\rangle_{-1/2,1/2}\Big)\,dt.
\end{equation*}
Here  
\begin{equation*}
\gamma_Du_\iota(t),\gamma_D\phi\in H^{1/2}(\partial\Omega_\sharp),
\quad \gamma_Nu_\iota(t), \,\gamma_N\phi\in H^{-1/2}(\partial\Omega_\sharp).
\end{equation*}
If the boundary and the functions are smooth enough, \(\mathcal M_\iota^T([\phi])\) reduces to
\begin{equation*}
\int_0^T \eta(t)\int_{\partial\Omega_\sharp}
\bigl(\partial_\nu u_\iota(t)\,\phi-u_\iota(t)\,\partial_\nu\phi\bigr)\,dS\,dt.
\end{equation*}
Lemma~\ref{lem:finite_time_endpoint_identity} shows that \(\mathcal M_\iota^T\) is
the continuous extension, in \(\mathfrak T(\Omega_\sharp)^*\), of these
weighted boundary data; it is not a pointwise-in-time Cauchy trace.

For later reference, define the boundary-functional difference on harmonic test functions by
\begin{equation*}
\mathfrak D_T(v):=(\mathcal M_1^T-\mathcal M_2^T)([v]),
\quad v\in\mathcal H(\Omega_\sharp).
\end{equation*}
Set \(q_\iota=c_\iota^{-2}\) or \(q_\iota=\kappa_\iota^{-1}\), and
\begin{equation*}
G=q_1f_1-q_2f_2,\quad R_T=q_1u_1(T,\cdot)-q_2u_2(T,\cdot),
\quad \varepsilon_T=\|R_T\|_{L^2(\Omega_\sharp)}.
\end{equation*}

%%%

\begin{theorem}	\label{thm:main_finite_time_stability}
Let the two coefficient--initial-state pairs satisfy the support, positivity,
and structural conditions in the corresponding admissibility definition, and
assume
\begin{equation*}
\mathcal M_1^T=\mathcal M_2^T,\quad\text{in }\mathfrak T(\Omega_\sharp)^*.
\end{equation*}
\begin{item}
\item[(1) Oblique cylinder.]
Let \(\Omega_\sharp=\Omega_{\theta,\mu,h}\) and
\begin{equation*}
G(x)=G_0(y)\chi_{\Omega_{\theta,\mu,h}}(x),\quad \widetilde G_0:=G_0\chi_D\in H^S(\mathbb R^{n-1}),\quad\|\widetilde G_0\|_{H^S}\le M,
\end{equation*}
with \(S>0\) and \(M>0\). Then for every \(0\le r<S\),
\begin{equation}\label{eq:main_finite_time_holder}
\|\widetilde G_0\|_{H^r(\mathbb R^{n-1})}
\le C M^{1-\alpha_r}\varepsilon_T^{\alpha_r},
\quad \alpha_r=\frac{S-r}{S+n/2}. 
\end{equation}
Here \(C=C(n,r,S,D,\mu,h)\) is independent of \(G_0,M,T\). For \(r=0\),
\begin{equation*}
\|q_1f_1-q_2f_2\|_{L^2(\Omega_{\theta,\mu,h})}
=h^{1/2}\|G_0\|_{L^2(D)}\le C M^{1-\alpha_0}\varepsilon_T^{\alpha_0},
\quad \alpha_0=\frac{S}{S+n/2},
\end{equation*}
with \(h^{1/2}\) absorbed into \(C\).

\item[(2) Polygonal cylinder.]
Let \(\Omega_\sharp=\Omega_{\mathcal B}\), fix \(d\in\mathbb N_0\), and let
\begin{equation*}
G(x)=\sum_{j=1}^N p_j(Y(x))\chi_{P_j}(x),\quad p_j\in\Pi_d(D),
\end{equation*}
where \(\Pi_d(D)\) is the space of polynomials of total degree at most \(d\)
restricted to \(D\). Set
\begin{equation*}
M_d=\binom{n-1+d}{d},\quad q_d=NM_d.
\end{equation*}
Let \(\Xi=\{(\rho_k,\omega_k):1\le k\le q_d\}\) be any nondegenerate sampling
set from Theorem~\ref{thm:polygonal_finite_dimensional_stability}. Then
\begin{equation}\label{eq:main_finite_time_polygonal}
\|G\|_{L^2(\Omega_{\mathcal B})} \le CK_{\mathcal B,\Xi}\varepsilon_T,
\end{equation}
where
\begin{equation*}
K_{\mathcal B,\Xi}=\left(\sum_{k=1}^{q_d}\|e^{-i(\cdot-x_0)\cdot
\zeta_\theta(\rho_k\omega_k)}\|_{L^2(\Omega_{\mathcal B})}^2
\right)^{1/2}.
\end{equation*}
\end{item}
\end{theorem}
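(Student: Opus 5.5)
The plan is to turn the equality \(\mathcal M_1^T=\mathcal M_2^T\) into a moment identity, and then to apply the two transform results in quantitative form.

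\textbf{Step 1: endpoint identity.} Lemma~\ref{lem:finite_time_endpoint_identity} gives \(\Delta W_\iota^T=q_\iota(u_\iota(T)-f_\iota)\). Take a harmonic test function \(v\in\mathcal H(\Omega_\sharp)\), so \(\Delta v=0\) near \(\overline\Omega_\sharp\). Then
\begin{equation*}
\mathcal M_\iota^T([v])=\int_{\Omega_\sharp}q_\iota\bigl(u_\iota(T)-f_\iota\bigr)v\,dx .
\end{equation*}
Subtracting the two cases and using \(\mathfrak D_T\equiv0\) gives
\begin{equation*}
\int_{\Omega_\sharp}G\,v\,dx=\int_{\Omega_\sharp}R_T\,v\,dx .
\end{equation*}
The harmonic test function is \(v=e^{-i(x-x_0)\cdot\zeta_\theta(\xi)}\) with \(\xi\in\theta^\perp\). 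It is harmonic because \(\zeta_\theta\cdot\zeta_\theta=0\). Cauchy--Schwarz then yields
\begin{equation*}
\Bigl|\int_{\Omega_\sharp}G\,e^{-i(x-x_0)\cdot\zeta_\theta(\xi)}\,dx\Bigr|\le\varepsilon_T\,\|e^{-i(\cdot-x_0)\cdot\zeta_\theta(\xi)}\|_{L^2(\Omega_\sharp)} .
\end{equation*}

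\textbf{Step 2: oblique cylinder.} Write \(x=x_0+y+s\theta\). The phase becomes \(e^{-i\xi\cdot y}e^{|\xi|s}\). Integrating over the section \(s\in(\mu\cdot y,\mu\cdot y+h)\) gives
\begin{equation*}
\int G\,v\,dx=\frac{e^{h|\xi|}-1}{|\xi|}\int_D G_0(y)\,e^{-i\xi\cdot y+|\xi|\mu\cdot y}\,dy .
\end{equation*}
The main obstacle is that the real exponent \(|\xi|\mu\cdot y\) spoils a direct Fourier estimate. With \(\mu=0\) this step is routine. I expect the general case to need a modified argument: choose a shifted complex null vector whose \(\theta\)-component cancels the \(\mu\)-tilt, that is, replace \(\zeta\) by \(\zeta'=\xi+i|\xi|(\theta-\mu)\)-type corrections adjusted to stay null. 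Failing that, accept a weight \(e^{C|\xi|}\) and absorb it into the frequency cutoff, which would give only a logarithmic rate. Since the stated exponent is Hölder, I would first try the null-vector adjustment. For \(|\xi|\le\rho\) one has \(\|v\|_{L^2}\lesssim e^{C\rho}\). Here \(\rho\) is kept in a fixed bounded range, and the weight \(|\xi|^{-1}(e^{h|\xi|}-1)\) is bounded below by \(h\).

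With the weight under control, the goal is a pointwise bound \(|\widehat{\widetilde G_0}(\xi)|\lesssim\varepsilon_T\) for \(|\xi|\le\rho\). Combine it with the tail estimate \(\int_{|\xi|>\rho}\langle\xi\rangle^{2r}|\widehat{\widetilde G_0}|^2\le\rho^{2(r-S)}M^2\). This gives
\begin{equation*}
\|\widetilde G_0\|_{H^r}^2\lesssim\rho^{2r+n-1}\varepsilon_T^2+\rho^{-2(S-r)}M^2 .
\end{equation*}
Optimizing \(\rho\sim(M/\varepsilon_T)^{1/(S+n/2)}\), with \(n-1\) bounded by \(n\) for simplicity, yields the exponent \(\alpha_r=(S-r)/(S+n/2)\). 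This works as long as the exponential weight in \(\rho\) has been neutralized by the geometry. The \(L^2\) identity with \(h^{1/2}\) then follows from Fubini.

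\textbf{Step 3: polygonal cylinder.} Here \(G\) lies in the finite-dimensional space of dimension \(q_d\). Apply Theorem~\ref{thm:polygonal_finite_dimensional_stability} at the nondegenerate sampling set \(\Xi\). It supplies an estimate
\begin{equation*}
\|G\|_{L^2}\le C\Bigl(\sum_k|\mathcal L_k G|^2\Bigr)^{1/2},
\end{equation*}
where the \(\mathcal L_k\) are the sampled transforms at \(\rho_k\omega_k\); this comes from invertibility of the sampling matrix and norm equivalence. By Step 1, \(|\mathcal L_kG|\le\varepsilon_T\|e^{-i(\cdot-x_0)\cdot\zeta_\theta(\rho_k\omega_k)}\|_{L^2}\). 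Summing over \(k\) gives \eqref{eq:main_finite_time_polygonal} with the factor \(K_{\mathcal B,\Xi}\).
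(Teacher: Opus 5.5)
Your Steps 1 and 3 are the paper's argument. You test the endpoint identity against the null exponential and apply Cauchy--Schwarz. You then apply Theorem~\ref{thm:polygonal_finite_dimensional_stability} to the samples $(\mathcal T_{\mathcal B}G)(\rho_k\omega_k)=\int_{\Omega_{\mathcal B}}R_Tv_k\,dx$. The gap is in Step 2, which carries all the content of part (1).

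You correctly see that $\zeta_\theta(\xi)$ only controls the off-axis value $\int_D G_0e^{-i\xi\cdot y+|\xi|\mu\cdot y}\,dy$. However, you never construct the replacement phase. Your candidate $\xi+i|\xi|(\theta-\mu)$ is not null: its square is $-|\xi|^2|\mu|^2-2i|\xi|\,\xi\cdot\mu$. The paper takes $\zeta_\mu(\eta)=\eta-\beta\mu+\beta\theta$. This gives $(x-x_0)\cdot\zeta_\mu(\eta)=\eta\cdot y+\beta r$ for $x=x_0+y+(\mu\cdot y+r)\theta$. Nullity forces $(1+|\mu|^2)\beta^2-2(\mu\cdot\eta)\beta+|\eta|^2=0$, whose roots are non-real with $b=|\operatorname{Im}\beta|\sim|\eta|$. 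So even after the tilt is removed, both $v_\eta$ and the longitudinal factor $B_h(\beta)=\int_0^h e^{-i\beta r}\,dr$ still grow like $e^{bh}$.

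This is why your low-frequency step fails as written. From $\|v\|_{L^2}\lesssim e^{C\rho}$ and the weight being $\ge h$, you only get $|\widehat{\widetilde G_0}(\xi)|\lesssim\varepsilon_T e^{C\rho}$. That ruins the H\"older rate once $\rho\sim(M/\varepsilon_T)^{1/(S+n/2)}\to\infty$. Keeping ``$\rho$ in a fixed bounded range'' is incompatible with that optimization.

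The missing estimate is a bound on the ratio, in which the exponentials cancel exactly:
\begin{equation*}
\frac{\|v_\eta\|_{L^2}}{|B_h(\beta)|}\le |D|^{1/2}\Bigl(\frac{|\beta|^2}{2b}\coth\frac{bh}{2}\Bigr)^{1/2}\le C\langle\eta\rangle^{1/2}.
\end{equation*}
It yields $|\widehat{\widetilde G_0}(\eta)|\le C\varepsilon_T\langle\eta\rangle^{1/2}$ for all $\eta$. It does not give the uniform bound $\lesssim\varepsilon_T$ you assert. Already for $\mu=0$ the ratio equals $(|D|\tfrac{|\xi|}{2}\coth\tfrac{h|\xi|}{2})^{1/2}$, so these test functions cannot give a uniform bound on a growing ball.

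This half-power loss is what produces $\varepsilon_T^2\rho^{2r+1+(n-1)}=\varepsilon_T^2\rho^{2r+n}$ on the low frequencies, and hence the exponent $S+n/2$. Your $\rho^{2r+n-1}$, ``with $n-1$ bounded by $n$'', reaches the stated exponent only by coincidence. It rests on an intermediate bound that the argument does not provide.
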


Section~\ref{sec:finite_time_stability} proves the theorem. Its endpoint identity
also handles unequal finite-time measurements via the boundary functional
\(\mathfrak D_T\). For equal measurements, the only remaining defect is the
terminal residual; hence \(R_T=0\) restores exact uniqueness, and a known decay
bound for \(R_T\) gives a quantitative approximation to the infinite-time
result. If the zero extension of \(G_0\) jumps across \(\partial D\), the
natural range is \(0<S<1/2\). For \(S\ge1/2\), one needs the corresponding trace
compatibility at \(\partial D\).

%%%%%%%%%%%%%%%%%%%%%%%%%%%%%%%%%%%%%%%%%%%%%%%%%%%%%%%%%%%%%%%%%%%%%	

\section{A Totally Real Uniqueness Argument via a Weighted Integral Identity}
\label{sec:totally_real_core}

We prove Theorem~\ref{thm:main_totally_real_uniqueness} in the oblique cylinder geometry of Subsection~\ref{subsec:Oblique_cylinder_geometry}.
Fix \(\theta\in\mathbb S^{n-1}\) and write
\begin{equation*}
x=x_0+y+s\theta,
\qquad y\in\theta^\perp,\quad s\in\mathbb R.
\end{equation*}
Identifying \(\theta^\perp\) via an orthonormal basis and
using the Euclidean measure \(dy\), the map \((y,s)\mapsto x_0+y+s\theta\) has
Jacobian one. Recall the domain \(\Omega_{\theta,\mu,E}\), and the unknown is
codimension-one:
\begin{equation}\label{eq:g_codimension_one_structure_TR}
g(x)=g_0(y)\chi_{\Omega_{\theta,\mu,E}}(x),	\quad g_0\in L^1(D).
\end{equation}
We prove the theorem for the phase \(e^{-ix\cdot\zeta_\theta(\eta)}\); the
opposite sign follows by replacing the parametrization with its negative.

%%%%%%%%%%%%%%%%%%%%%%%%%%%%%%%%%%%%%%%%%

\subsection{Reduction to an entire Fourier--Laplace transform}
\label{subsec:FL_reduction_TR}

Assume
\begin{equation}\label{eq:basic_weighted_identity}
\int_{\Omega_{\theta,\mu,E}}g(x)e^{-ix\cdot\zeta_\theta(\eta)}\,dx=0,
\end{equation}
where $\zeta_\theta(\eta)=\eta+i|\eta|\theta$ and $\eta\in U\Subset\theta^\perp\setminus\{0\}$. For \(x=x_0+y+s\theta\), using \(\eta\cdot\theta=y\cdot\theta=0\) and \(|\theta|=1\),
\begin{equation*}
x\cdot\zeta_\theta(\eta)=x_0\cdot\zeta_\theta(\eta)+y\cdot\eta+i|\eta|s,
\end{equation*}
hence
\begin{equation}\label{eq:phase_factorization_general}
e^{-ix\cdot\zeta_\theta(\eta)}=e^{-ix_0\cdot\zeta_\theta(\eta)}
e^{-i\eta\cdot y}e^{|\eta|s}.
\end{equation}
Since \(D\) and \(E\) are bounded and \(g_0\in L^1(D)\), the integrand is
integrable, so Fubini and \eqref{eq:g_codimension_one_structure_TR} give
\begin{equation}\label{eq:Fubini_TR_general}
0=e^{-ix_0\cdot\zeta_\theta(\eta)}\int_D g_0(y)e^{-i\eta\cdot y}
\left(\int_{\mu\cdot y+E}e^{|\eta|s}\,ds\right)dy,
\end{equation}
where \(\mu\cdot y+E:=\{\mu\cdot y+r:r\in E\}\).

Set \(\rho:=|\eta|>0\). The change of variables \(s=\mu\cdot y+r\) gives
\begin{equation}\label{eq:section_laplace_factorization}
\int_{\mu\cdot y+E}e^{\rho s}\,ds=e^{\rho\mu\cdot y}\int_E e^{\rho r}\,dr.
\end{equation}
Define the one-dimensional Laplace factor as
\begin{equation}
B_E^+(\rho):=\int_E e^{\rho r}\,dr,\quad \rho>0.
\end{equation}
Since \(E\) has positive measure, \(B_E^+(\rho)>0\), and boundedness of \(E\)
gives \(B_E^+(\rho)<\infty\). Hence the nonzero scalar factors in
\eqref{eq:Fubini_TR_general} can be divided out, yielding
\begin{equation}\label{eq:reduced_integral_identity_TR}
\int_D g_0(y)\exp\bigl[(|\eta|\mu-i\eta)\cdot y\bigr]\,dy=0,\quad \eta\in U.
\end{equation}
This is where the translated section structure of the oblique cylinder geometry enters: the one-dimensional sections must be translates of a common set \(E\); otherwise
\eqref{eq:section_laplace_factorization} would not separate into a scalar
Laplace factor and an exponential linear in \(y\).

Define the Fourier--Laplace transform of \(g_0\) by
\begin{equation}\label{eq:H_def_TR}
H(z):=\int_D g_0(y)e^{z\cdot y}\,dy,\quad z\in\mathbb C^m,
\end{equation}
where \(z\cdot y=\sum_{j=1}^m z_jy_j\) under the fixed identification
\(\theta^\perp\cong\mathbb R^m\). Then
\eqref{eq:reduced_integral_identity_TR} becomes
\begin{equation}\label{eq:H_vanish_on_surface_TR}
H(|\eta|\mu-i\eta)=0,\quad \eta\in U.
\end{equation}
Since \(D\) is bounded and \(g_0\in L^1(D)\), differentiation under the
integral sign shows that \(H\) is holomorphic on \(\mathbb C^m\).

\begin{lemma}\label{lem:H_entire_TR}
The function \(H\) in \eqref{eq:H_def_TR} is entire on \(\mathbb C^m\).
Moreover, for every multi-index \(\gamma\in\mathbb N_0^m\),
\begin{equation}\label{eq:H_derivative_formula_TR}
\partial_z^\gamma H(z)=\int_D y^\gamma g_0(y)e^{z\cdot y}\,dy,
\quad z\in\mathbb C^m,
\end{equation}
where \(y^\gamma=y_1^{\gamma_1}\cdots y_m^{\gamma_m}\).
\end{lemma}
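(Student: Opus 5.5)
The plan is to prove holomorphy and the derivative formula together by differentiating under the integral sign, using a locally uniform integrable majorant. Since \(D\) is bounded, fix \(R>0\) with \(|y|\le R\) for all \(y\in D\). For \(z\) in any polydisc \(\{|z|\le \rho\}\) we have \(|e^{z\cdot y}|=e^{\operatorname{Re}z\cdot y}\le e^{\rho R}\). Hence \(|g_0(y)e^{z\cdot y}|\le e^{\rho R}|g_0(y)|\), and the right-hand side lies in \(L^1(D)\) independently of \(z\). The same bound, multiplied by \(R^{|\gamma|}\), controls every factor \(|y^\gamma g_0(y)e^{z\cdot y}|\).

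The key steps run in order. First, continuity of \(H\) and of each candidate derivative \(\int_D y^\gamma g_0 e^{z\cdot y}\,dy\) follows from dominated convergence with the majorant above. Second, holomorphy in each variable \(z_k\) separately comes from either of two routes.
\begin{itemize}
\item One route is Morera's theorem combined with Fubini: the integral of \(e^{z\cdot y}\) over any closed triangle in the \(z_k\)-plane vanishes for each fixed \(y\), and the majorant justifies the interchange.
\item The other is the mean value theorem applied to the difference quotient \(\bigl(e^{(z+he_k)\cdot y}-e^{z\cdot y}\bigr)/h\). This quotient is bounded by \(R e^{(\rho+1)R}\) for \(|h|\le1\), so dominated convergence gives \(\partial_{z_k}H=\int_D y_k g_0 e^{z\cdot y}\,dy\).
\end{itemize}
Continuity together with separate holomorphy then gives joint holomorphy on \(\mathbb C^m\) by Osgood's lemma, so \(H\) is entire.

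Third, formula \eqref{eq:H_derivative_formula_TR} follows by induction on \(|\gamma|\). The inductive step applies the second step with \(g_0\) replaced by \(y^\gamma g_0\), which still lies in \(L^1(D)\) because \(|y^\gamma|\le R^{|\gamma|}\) on \(D\).

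No step is a genuine obstacle. The only point to handle carefully is that the majorant must be uniform for \(z\) in a neighbourhood and for the increment \(h\), and boundedness of \(D\) supplies exactly this. As an alternative, one could expand \(e^{z\cdot y}=\sum_\gamma z^\gamma y^\gamma/\gamma!\) and integrate term by term, which is justified by \(\sum_\gamma |z^\gamma| R^{|\gamma|}/\gamma!\le e^{mR|z|}\). This shows directly that \(H\) is given by an everywhere-convergent power series with coefficients \(\int_D y^\gamma g_0\,dy/\gamma!\), and the derivative formula then follows by differentiating the series term by term.
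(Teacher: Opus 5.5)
Your proposal is correct and follows essentially the same route as the paper: a locally uniform majorant from the boundedness of $D$, and dominated convergence for the difference quotient. The paper writes that quotient as $y_je^{z\cdot y}\int_0^1e^{thy_j}\,dt$, which is the precise form of your ``mean value'' bound for this complex-valued integrand; it then iterates to higher derivatives and integrates the exponential series termwise, and the only difference is that it obtains holomorphy from the local power series at each $z_0$ rather than from Osgood's lemma, which is an equally valid way to finish.
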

\begin{proof}
Fix a compact set \(K\Subset\mathbb C^m\). Since \(D\) is bounded, choose
\(R_D>0\) with \(|y|\le R_D\) for \(y\in D\), and set
\(M_K:=\sup_{z\in K}|z|<\infty\). For \(z\in K\), \(y\in D\),
\begin{equation*}
|e^{z\cdot y}|=e^{\operatorname{Re}(z\cdot y)}\le e^{|z||y|}\le e^{M_KR_D}.
\end{equation*}
Hence \(|g_0(y)e^{z\cdot y}|\le e^{M_KR_D}|g_0(y)|\), which is integrable on
\(D\). Thus \(H\) is well defined and locally bounded.

To justify differentiation under the integral sign, fix \(z\in\mathbb C^m\)
and a coordinate vector \(e_j\). For small \(h\),
\begin{equation*}
\frac{e^{(z+he_j)\cdot y}-e^{z\cdot y}}{h}=y_j e^{z\cdot y}\int_0^1 e^{thy_j}\,dt.
\end{equation*}
For \(z,z+he_j\in K\) and \(|h|\le1\), this is bounded by $R_De^{(M_K+1)R_D}|g_0(y)|$,
which is integrable. Dominated convergence gives
\begin{equation*}
\partial_{z_j}H(z)=\int_D y_jg_0(y)e^{z\cdot y}\,dy.
\end{equation*}
Repeating the argument yields
\eqref{eq:H_derivative_formula_TR}.

Finally, for fixed \(z_0\in\mathbb C^m\),
\begin{equation*}
e^{z\cdot y}=e^{z_0\cdot y}\sum_{\gamma\in\mathbb N_0^m}
\frac{y^\gamma(z-z_0)^\gamma}{\gamma!}.
\end{equation*}
On a small ball about \(z_0\), the series is dominated by \(C|g_0(y)|\).
Termwise integration gives a convergent power series with coefficients
\(\partial_z^\gamma H(z_0)/\gamma!\). Hence \(H\) is holomorphic at every
point of \(\mathbb C^m\), and therefore entire.
\end{proof}

%%%%%%%%%%%%%%%%%%%%%%%%%%%%%

\subsection{The associated real-analytic totally real submanifold}

The vanishing relation \eqref{eq:H_vanish_on_surface_TR} is parametrized by
\begin{equation}\label{eq:Phi_def_TR}
\Phi:U\to\mathbb C^{n-1},\quad \Phi(\eta):=|\eta|\mu-i\eta,
\end{equation}
and we set
\begin{equation}\label{eq:MU_def_TR}
M_U:=\Phi(U)\subset\mathbb C^{n-1}.
\end{equation}
Thus \eqref{eq:H_vanish_on_surface_TR} states that \(H\) vanishes on \(M_U\).
We next show that \(M_U\) is a maximally totally real, \(n-1\)-dimensional
real-analytic submanifold of \(\mathbb C^{n-1}\).
\begin{definition}
Let \(M\subset\mathbb C^{n-1}\) be an \(n-1\)-dimensional \(C^1\) real submanifold.
It is \emph{totally real at \(p\in M\)} if
\begin{equation*}
T_pM\cap iT_pM=\{0\}.
\end{equation*}
If this holds at every \(p\in M\), then \(M\) is \emph{totally real}. Since
\(\dim_{\mathbb R}M=n-1=\dim_{\mathbb C}\mathbb C^{n-1}\), such an \(M\) is also
called maximally totally real.
\end{definition}

The next lemma is where the special phase \(\eta+i|\eta|\theta\) and the affine
translation by \(\mu\) enter the argument. The proof is included in full because
this is the point at which the lower-dimensional family of weights becomes an
injective testing family after the structural reduction.

\begin{lemma}\label{lem:MU_totally_real_TR}
The set \(M_U\) defined in \eqref{eq:MU_def_TR} is an embedded real-analytic,
maximally totally real submanifold of \(\mathbb C^{n-1}\).
\end{lemma}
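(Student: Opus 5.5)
We work directly with the parametrization \(\Phi(\eta)=|\eta|\mu-i\eta\) from \eqref{eq:Phi_def_TR} on the open set \(U\subset\theta^\perp\setminus\{0\}\cong\mathbb R^{n-1}\setminus\{0\}\). The plan has three steps: \(\Phi\) is real-analytic, it is an injective immersion which is a homeomorphism onto its image (so \(M_U\) is embedded), and the tangent spaces satisfy \(T_pM_U\cap iT_pM_U=\{0\}\).

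\textbf{Real-analyticity and embedding.} Since \(0\notin U\), the map \(\eta\mapsto|\eta|=(\eta\cdot\eta)^{1/2}\) is real-analytic on \(U\), so \(\Phi\) is real-analytic as a map into \(\mathbb C^{n-1}\cong\mathbb R^{2(n-1)}\). Writing \(\Phi=\operatorname{Re}\Phi+i\operatorname{Im}\Phi\), we have \(\operatorname{Im}\Phi(\eta)=-\eta\). Hence \(\Phi\) is injective, and its inverse on \(M_U\) is the restriction of the continuous linear map \(z\mapsto-\operatorname{Im}z\). So \(\Phi\) is a homeomorphism onto \(M_U\).

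For the immersion property, the differential is
\[
d\Phi(\eta)v=\Bigl(\frac{\eta\cdot v}{|\eta|}\Bigr)\mu-iv,\qquad v\in\theta^\perp.
\]
Its imaginary part is \(-v\), so \(d\Phi(\eta)\) is injective. Thus \(M_U\) is an embedded real-analytic submanifold of real dimension \(n-1\), with \(T_pM_U=d\Phi(\eta)(\mathbb R^{n-1})\) at \(p=\Phi(\eta)\).

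\textbf{Total reality.} This is the only step with content, and the point is to avoid the trap of mixing real and complex scalars. Suppose \(w\in T_pM_U\cap iT_pM_U\). Then there are \(v_1,v_2\in\mathbb R^{n-1}\) with
\[
w=a_1\mu-iv_1=i(a_2\mu-iv_2)=v_2+ia_2\mu,\qquad a_k:=\frac{\eta\cdot v_k}{|\eta|}\in\mathbb R.
\]
Since \(\mu,v_1,v_2\) are real vectors, we can compare real and imaginary parts:
\[
a_1\mu=v_2,\qquad -v_1=a_2\mu.
\]
Substituting \(v_2=a_1\mu\) into the definition of \(a_2\), and \(v_1=-a_2\mu\) into that of \(a_1\), gives
\[
a_2=\frac{a_1\,\eta\cdot\mu}{|\eta|},\qquad a_1=-\frac{a_2\,\eta\cdot\mu}{|\eta|}.
\]
Combining these yields \(a_1\bigl(1+(\eta\cdot\mu)^2/|\eta|^2\bigr)=0\), so \(a_1=0\). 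Then \(v_2=0\) and \(a_2=0\), hence \(v_1=0\) and \(w=0\).

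Alternatively, one can note that \(M_U\) is a graph over the imaginary axis space \(i\mathbb R^{n-1}\) of a real map, \(\operatorname{Re}z=\psi(\operatorname{Im}z)\). The tangent space is then \(\{dψ(u)+iu\}\) with \(d\psi\) real, and \(w\) and \(iw\) both lying in it forces \(w=0\) by the same kind of real/imaginary comparison. Since \(\dim_{\mathbb R}M_U=n-1=\dim_{\mathbb C}\mathbb C^{n-1}\), this is maximal total reality.
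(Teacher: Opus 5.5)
Your proof is correct and follows the paper's argument essentially line by line: real-analyticity comes from \(0\notin U\), injectivity and the immersion property are read off from \(\operatorname{Im}\Phi(\eta)=-\eta\), and total reality comes from splitting \(w\in T_pM_U\cap iT_pM_U\) into real and imaginary parts, which yields \(\bigl(1+(e\cdot\mu)^2\bigr)(e\cdot a)=0\) with \(e=\eta/|\eta|\). One caution about your closing aside: for a general real graph \(\operatorname{Re}z=\psi(\operatorname{Im}z)\), the reality of \(d\psi\) alone does not give total reality, because the condition is that \(I+(d\psi)^2\) be invertible; this fails, for instance, for \(\psi(u_1,u_2)=(-u_2,u_1)\), whose graph is the complex line \(z_1=iz_2\), and it holds here only because \(d\psi(u)a=|u|^{-1}(u\cdot a)\mu\) has rank one with real eigenvalue \(|u|^{-1}u\cdot\mu\), which is exactly your main computation.
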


\begin{proof}
We show that \(\Phi\) is a real-analytic embedding and that \(M_U\) is totally real.

Since \(U\Subset\mathbb R^{n-1}\setminus\{0\}\), the map \(\eta\mapsto|\eta|\) is
real analytic, so \(\Phi\) is real analytic. Moreover,
\begin{equation*}
\operatorname{Re}\Phi(\eta)=|\eta|\mu,
\quad \operatorname{Im}\Phi(\eta)=-\eta.
\end{equation*}
Thus \(\eta=-\operatorname{Im}z\) for \(z\in M_U\), so \(\Phi\) is injective
with real-analytic inverse \(z\mapsto-\operatorname{Im}z\).

For \(a\in\mathbb R^{n-1}\),
\begin{equation*}
d\Phi_\eta(a)=\mu\frac{\eta\cdot a}{|\eta|}-ia.
\end{equation*}
If \(d\Phi_\eta(a)=0\), then taking imaginary parts gives \(a=0\). Hence
\(\Phi\) is an immersion, and therefore an embedding. Thus \(M_U\) is an
\(m\)-dimensional real-analytic submanifold of \(\mathbb C^{n-1}\).

It remains to prove total reality. Fix \(\eta\in U\), set
\(e=\eta/|\eta|\), and suppose
\begin{equation*}
v\in T_{\Phi(\eta)}M_U\cap iT_{\Phi(\eta)}M_U.
\end{equation*}
Then for some \(a,b\in\mathbb R^{n-1}\),
\begin{equation*}
v=\mu(e\cdot a)-ia,\quad v=i\{\mu(e\cdot b)-ib\}=b+i\mu(e\cdot b).
\end{equation*}
Comparing real and imaginary parts,
\begin{equation*}
b=\mu(e\cdot a),\quad -a=\mu(e\cdot b).
\end{equation*}
Substituting the first identity into the second gives
\begin{equation*}
a=-\mu(e\cdot\mu)(e\cdot a).
\end{equation*}
Taking the scalar product with \(e\),
\begin{equation*}
\bigl(1+(e\cdot\mu)^2\bigr)(e\cdot a)=0.
\end{equation*}
Since the coefficient is positive, \(e\cdot a=0\), hence \(b=0\) and
\(a=0\). Therefore \(v=0\), so
\begin{equation*}
T_{\Phi(\eta)}M_U\cap iT_{\Phi(\eta)}M_U=\{0\}.
\end{equation*}
Thus \(M_U\) is maximally totally real.
\end{proof}

%%%%%%%%%%%%%%%%%%%%%%%%%%%%%%%%%%%%%%%%%%%%%%%%%%%%%%

\subsection{Holomorphic uniqueness from a totally real submanifold}

We now prove the holomorphic uniqueness principle used below. Although standard,
we include the argument to make the role of real analyticity explicit.

\begin{lemma}\label{lem:local_graph_TR}
Let \(M\subset\mathbb C^{n-1}\) be a real-analytic maximally totally real
submanifold and \(p\in M\). After a translation and a complex-linear change
of coordinates, \(M\) is locally the graph
\begin{equation}\label{eq:local_graph_form}
M=\{x+i\varphi(x):x\in V\subset\mathbb R^m\},
\end{equation}
where \(V\) is a neighborhood of \(0\) and \(\varphi:V\to\mathbb R^{n-1}\) is
real analytic with \(\varphi(0)=0\) and \(D\varphi(0)=0\).
\end{lemma}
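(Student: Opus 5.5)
We prove the lemma by the standard normalization of a maximally totally real submanifold. First translate so that \(p=0\). The tangent space \(T_0M\) is a real \(m\)-dimensional subspace, \(m=n-1\). Total reality says \(T_0M\cap iT_0M=\{0\}\), and comparing dimensions gives \(T_0M\oplus iT_0M=\mathbb C^{m}\) as real vector spaces.

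Choose a real basis \(w_1,\dots,w_m\) of \(T_0M\). By the decomposition above, these vectors are linearly independent over \(\mathbb C\): if \(\sum(\alpha_k+i\beta_k)w_k=0\) with \(\alpha_k,\beta_k\) real, then \(\sum\alpha_kw_k=-i\sum\beta_kw_k\) lies in \(T_0M\cap iT_0M=\{0\}\), so \(\alpha=\beta=0\). Hence the complex-linear map \(A:\mathbb C^m\to\mathbb C^m\) with \(Ae_k=w_k\) is invertible. In the new coordinates \(z=A^{-1}w\), the tangent space of \(A^{-1}(M)\) at \(0\) is exactly \(\mathbb R^m\subset\mathbb C^m\). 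Since \(A\) is complex linear, both real analyticity and total reality are preserved.

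Next, write \(\pi:\mathbb C^m\to\mathbb R^m\) for the projection \(z\mapsto\operatorname{Re}z\). Restricted to \(M\) near \(0\), its differential at \(0\) is the identity on \(T_0M=\mathbb R^m\), hence an isomorphism. Take a real-analytic local parametrization \(\psi\) of \(M\) near \(0\), such as the map \(\Phi\) from Lemma~\ref{lem:MU_totally_real_TR} composed with \(A^{-1}\) and the translation. Then \(\operatorname{Re}\circ\psi\) has invertible differential, and the real-analytic inverse function theorem gives a neighborhood \(V\) of \(0\in\mathbb R^m\) and a real-analytic inverse. Reparametrizing \(M\) by \(x=\operatorname{Re}z\) yields \(M=\{x+i\varphi(x):x\in V\}\) locally, where \(\varphi=\operatorname{Im}\circ\psi\circ(\operatorname{Re}\circ\psi)^{-1}\) is real analytic with \(\varphi(0)=0\).

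Finally, \(D\varphi(0)=0\) because the tangent space at \(0\) of the graph is \(\{a+iD\varphi(0)a\}\). This must equal \(\mathbb R^m\), which forces \(D\varphi(0)=0\).

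The only genuinely delicate point is the choice of the complex-linear change of coordinates. One needs complex (not merely real) independence of a real basis of \(T_0M\) to straighten the tangent space to \(\mathbb R^m\) by a holomorphic linear map. Maximal total reality is exactly the hypothesis that provides this. Everything else is the real-analytic inverse function theorem.
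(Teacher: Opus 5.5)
Your proposal is correct and follows essentially the same route as the paper. You straighten $T_0M$ to $\mathbb R^{n-1}$ by a complex-linear map, using $T_0M\cap iT_0M=\{0\}$ to get complex independence of a real basis, then invert $\operatorname{Re}\circ\psi$ by the real-analytic inverse function theorem and read off $\varphi=\operatorname{Im}\circ\psi\circ(\operatorname{Re}\circ\psi)^{-1}$ with $D\varphi(0)=0$; the only caveat is that your suggested parametrization via $\Phi$ from Lemma~\ref{lem:MU_totally_real_TR} applies only when $M=M_U$, whereas for general $M$ any real-analytic local parametrization serves.
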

\begin{proof}
After translating, assume \(p=0\), and set \(P:=T_0M\). Since \(M\) is
maximally totally real at \(0\),
\begin{equation*}
P\cap iP=\{0\},
\end{equation*}
so \(\mathbb C^{n-1}=P\oplus iP\) as real vector spaces. Choose a real basis
\(v_1,\dots,v_{n-1}\) of \(P\). These vectors are complex-linearly independent:
if \(\sum_j(a_j+ib_j)v_j=0\) with \(a_j,b_j\in\mathbb R\), then
\(\sum_j a_jv_j=-i\sum_j b_jv_j\in P\cap iP\), so both sums vanish and hence
\(a_j=b_j=0\). Thus an invertible complex-linear map sends \(P\) to
\(\mathbb R^{n-1}\). After applying it, assume $T_0M=\mathbb R^{n-1}\subset\mathbb C^{n-1}$.

Let \(\gamma:W\subset\mathbb R^{n-1}\to\mathbb C^{n-1}\) be a real-analytic local
parametrization of \(M\) with \(\gamma(0)=0\) and
\(d\gamma_0(\mathbb R^{n-1})=\mathbb R^{n-1}\). Write
\begin{equation*}
\gamma(t)=\operatorname{Re}\gamma(t)+i\operatorname{Im}\gamma(t).
\end{equation*}
Since \(d\gamma_0\) maps onto \(\mathbb R^{n-1}\) and the imaginary part of
\(T_0M\) is zero, the differential of \(t\mapsto\operatorname{Re}\gamma(t)\)
at \(0\) is invertible. By the real-analytic inverse function theorem,
\begin{equation*}
t\mapsto x:=\operatorname{Re}\gamma(t),
\end{equation*}
is a real-analytic diffeomorphism from a neighborhood of \(0\) onto some
neighborhood \(V\subset\mathbb R^{n-1}\). Reparametrizing \(M\) by \(x\),
\begin{equation*}
\gamma(t(x))=x+i\varphi(x),\quad \varphi(x):=\operatorname{Im}\gamma(t(x)).
\end{equation*}
Then \(\varphi\) is real analytic, \(\varphi(0)=0\), and
\(D\varphi(0)=0\) because \(T_0M=\mathbb R^{n-1}\). This gives
\eqref{eq:local_graph_form}.
\end{proof}

The normal form reduces uniqueness to the familiar fact that a holomorphic
function vanishing on a real product set vanishes in a complex neighborhood. We
state this in the next lemma, which bridges the real geometry and the
complex-analytic conclusion.
\begin{lemma}\label{lem:holomorphic_uniqueness_TR}
Let \(O\subset\mathbb C^{n-1}\) be connected and open, and let \(F\) be
holomorphic in \(O\). If \(F\) vanishes on a nonempty relatively open subset
of a real-analytic maximally totally real submanifold \(M\subset O\), then
\(F\equiv0\) in \(O\).
\end{lemma}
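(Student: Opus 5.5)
The argument is local-to-global. We first show that $F$ vanishes on a nonempty open subset of $O$, and then invoke the identity theorem for holomorphic functions on the connected open set $O$.

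\emph{Step 1: normal form.} Fix a point $p$ in the relatively open subset $M_0\subset M$ on which $F$ vanishes. By Lemma~\ref{lem:local_graph_TR}, after a translation and an invertible complex-linear change of coordinates $A$, we may write $M$ near $p$ as $\{x+i\varphi(x):x\in V\}$. Here $\varphi$ is real analytic, $\varphi(0)=0$ and $D\varphi(0)=0$. Shrinking $V$, we may assume this graph lies in $M_0$. The function $\widetilde F:=F\circ A^{-1}(\cdot+p)$ is holomorphic near $0$ and satisfies $\widetilde F(x+i\varphi(x))=0$ for $x\in V$.

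\emph{Step 2: straightening.} Since $\varphi$ is real analytic near $0$, its Taylor series converges on a small polydisc in $\mathbb R^{n-1}$. Hence $\varphi$ extends to a holomorphic map $\Psi(w)$ on a polydisc $\Delta\subset\mathbb C^{n-1}$, obtained by replacing $x$ with $w$ in the power series. Define
\begin{equation*}
\chi(w):=w+i\Psi(w),\quad w\in\Delta.
\end{equation*}
Then $\chi$ is holomorphic with $d\chi_0=I+iD\varphi(0)=I$. By the holomorphic inverse function theorem, $\chi$ is a biholomorphism from a smaller polydisc $\Delta'$ onto an open neighbourhood of $0$, and it maps $\Delta'\cap\mathbb R^{n-1}$ into $M$. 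The composition $h:=\widetilde F\circ\chi$ is therefore holomorphic on $\Delta'$ and vanishes on the real cube $\Delta'\cap\mathbb R^{n-1}$.

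\emph{Step 3: vanishing on a real cube forces $h\equiv0$.} All real partial derivatives of $h$ vanish at $0$ along $\mathbb R^{n-1}$. Since $h$ is holomorphic, its complex derivatives satisfy $\partial_w^\gamma h(0)=\partial_x^\gamma h(0)=0$ for every multi-index $\gamma$. The power series of $h$ at $0$ is therefore zero, so $h\equiv0$ on a polydisc. Alternatively, one can argue one variable at a time by the one-variable identity theorem.

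\emph{Step 4: globalization.} Since $\chi$ and $A$ are biholomorphic, $F$ vanishes on a nonempty open subset of $O$. Because $O$ is connected, the identity theorem gives $F\equiv0$ in $O$. The set of points where $F$ vanishes to infinite order is both open and closed, and here it is nonempty.

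The main obstacle is Step 2. The holomorphic extension of $\varphi$ requires real analyticity; mere $C^\infty$ regularity would not suffice for this route, although the statement is still true for smooth maximally totally real $M$ by other arguments. The inverse function step uses $D\varphi(0)=0$, which is exactly the maximal total reality normalization from Lemma~\ref{lem:local_graph_TR}.
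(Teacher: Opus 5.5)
Your proposal is correct and follows essentially the same route as the paper. Both arguments use the graph normal form from Lemma~\ref{lem:local_graph_TR}, then straighten \(M\) to \(\mathbb R^{n-1}\) with the biholomorphism \(w\mapsto w+i\varphi_{\mathbb C}(w)\), whose differential at \(0\) is the identity, and finish with the identity theorem on the connected set \(O\). The only cosmetic difference is in showing that the pulled-back function vanishes near \(0\). You kill its Taylor series using \(\partial_w^\gamma h=\partial_x^\gamma h\), whereas the paper extends the vanishing one variable at a time; you also mention that route as an alternative.
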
	
\begin{proof}
Choose \(p\in M\) such that \(F\) vanishes on a relatively open neighborhood of
\(p\) in \(M\). By Lemma~\ref{lem:local_graph_TR}, after a translation and a
complex-linear change of coordinates, assume \(p=0\) and, near \(0\),
\begin{equation*}
M=\{x+i\varphi(x):x\in V\subset\mathbb R^{n-1}\},
\end{equation*}
with \(\varphi\) real analytic, \(\varphi(0)=0\), \(D\varphi(0)=0\), and
\begin{equation*}
F(x+i\varphi(x))=0,\quad x\in V.
\end{equation*}
Since \(\varphi\) is real analytic, it has a holomorphic extension
\(\varphi_{\mathbb C}\) to a complex neighborhood \(V_{\mathbb C}\) of \(0\).
Define
\begin{equation*}
\Psi(z):=z+i\varphi_{\mathbb C}(z).
\end{equation*}
Then \(D\Psi(0)=I\), so by the holomorphic inverse function theorem \(\Psi\) is
biholomorphic from a neighborhood of \(0\) onto a neighborhood of \(0\), after
shrinking \(V_{\mathbb C}\). Set
\begin{equation*}
G(z):=F(\Psi(z)).
\end{equation*}
Then \(G\) is holomorphic near \(0\), and \(G(x)=0\) for real \(x\in V\).

Choose a product box \(I_1\times\cdots\times I_{n-1}\Subset V\) of open intervals
containing \(0\). We show \(G\equiv0\) in a polydisc around \(0\). Fix real
\((x_2,\cdots,x_{n-1})\in I_2\times\cdots\times I_{n-1}\). The holomorphic function
\(z_1\mapsto G(z_1,x_2,\cdots,x_{n-1})\) vanishes on \(I_1\), so it vanishes
identically in a complex disk about \(0\). Thus \(G(z_1,x_2,\cdots,x_{n-1})=0\) for
complex \(z_1\) near \(0\) and real \((x_2,\cdots,x_{n-1})\) in the box. Fixing such
\(z_1\) and real \((x_3,\cdots,x_{n-1})\), the same one-variable argument applied to
\(z_2\mapsto G(z_1,z_2,x_3,\cdots,x_{n-1})\) extends the vanishing in \(z_2\).
Repeating successively for \(z_3,\cdots,z_{n-1}\), we obtain \(G\equiv0\) in a
polydisc around \(0\).

Since \(\Psi\) is biholomorphic, \(F\equiv0\) in a neighborhood of \(p\). By the
identity theorem for holomorphic functions of several variables, the zero set
of \(F\) is all of the connected domain \(O\). Hence \(F\equiv0\) in \(O\).
\end{proof}	

%%%%%%%%%%%%%%%%%%%%%%%%%%%%%%%%%%%%%%%%%%%%%%%%%%%%%%%%%%%%%%%%%%%%%

\subsection{Proof of the totally real uniqueness theorem}

\begin{proof}[Proof of Theorem~\ref{thm:main_totally_real_uniqueness}]
Assume \eqref{eq:main_TR_identity_minus}. By the reduction in
Subsection~\ref{subsec:FL_reduction_TR}, the Fourier--Laplace transform \(H\)
from \eqref{eq:H_def_TR} satisfies
\begin{equation*}
H|_{M_U}=0,\quad M_U=\{|\eta|\mu-i\eta:\eta\in U\}.
\end{equation*}
By Lemma~\ref{lem:H_entire_TR}, \(H\) is entire on \(\mathbb C^{n-1}\), and by
Lemma~\ref{lem:MU_totally_real_TR}, \(M_U\) is a real-analytic maximally
totally real submanifold of \(\mathbb C^{n-1}\). Since \(U\) is open and nonempty,
\(M_U\) is a nonempty relatively open subset of this submanifold. Hence
Lemma~\ref{lem:holomorphic_uniqueness_TR}, with \(O=\mathbb C^{n-1}\), gives
\begin{equation}
H\equiv0, \quad\text{in }\mathbb C^{n-1}.
\label{eq:H_identically_zero}
\end{equation}

Evaluating on the imaginary axis, for every \(\lambda\in\mathbb R^{n-1}\),
\begin{equation*}
0=H(-i\lambda)=\int_D g_0(y)e^{-i\lambda\cdot y}\,dy.
\end{equation*}
Let \(\widetilde g_0\) be the zero extension of \(g_0\) to \(\mathbb R^{n-1}\).
Since \(g_0\in L^1(D)\) and \(D\) is bounded, \(\widetilde g_0\in L^1(\mathbb R^{n-1})\).
Thus
\begin{equation*}
\widehat{\widetilde g_0}(\lambda)=\int_{\mathbb R^m}\widetilde g_0(y)e^{-i\lambda\cdot y}\,dy=0, \quad \lambda\in\mathbb R^{n-1}.
\end{equation*}
By injectivity of the Fourier transform on \(L^1(\mathbb R^{n-1})\),
\(\widetilde g_0=0\) a.e.; hence \(g_0=0\) a.e. in \(D\). By the structural
assumption \eqref{eq:g_codimension_one_structure_TR},
\begin{equation*}
g(x)=g_0(y)\chi_{\Omega_{\theta,\mu,E}}(x)=0, \quad  \mbox{a.e.~}\Omega_{\theta,\mu,E}.
\end{equation*}
For the sign-reversed identity, assume
\begin{equation*}
\int_{\Omega_{\theta,\mu,E}}g(x)e^{ix\cdot\zeta_\theta(\eta)}\,dx=0,
\quad \eta\in U.
\end{equation*}
Using (\ref{eq:phase_factorization_general}), Fubini's theorem gives
\begin{equation*}
B_E^-(|\eta|)
\int_D g_0(y)\exp\bigl[(-|\eta|\mu+i\eta)\cdot y\bigr]\,dy=0,
\end{equation*}
where $B_E^-(\rho):=\int_E e^{-\rho r}\,dr$. Since \(B_E^-(\rho)>0\) for \(\rho>0\),
$H(i\eta-|\eta|\mu)=0$, $\eta\in U$. But
\begin{equation*}
\Phi_+(\eta):=i\eta-|\eta|\mu=-\Phi(\eta),
\end{equation*}
and multiplication by \(-1\) preserves embeddedness and the maximally totally
real property. Hence \(\Phi_+(U)\) is again a real-analytic maximally totally
real submanifold. The same holomorphic uniqueness and Fourier transform
arguments yield \(H\equiv0\) and \(g=0\) a.e. This proves both assertions.
\end{proof}

%%%%%%%%%%%%%%%%%%%%%%%%%%%%%%%%%%%%%%%%%%%%%%%%%%%%%%%%%%%%%%%%%%%%%%

\section{A Brion--Barvinok Injectivity Argument for Polygonal Cylinders}
\label{sec:brion_barvinok_core}

We prove Theorem~\ref{thm:main_brion_barvinok_uniqueness} for the negative
phase; the positive-phase case follows by reversing the endpoint order. The
argument combines the weighted Brion--Barvinok expansion with separation of the
longitudinal endpoints and the vertex phases.

Throughout this section, the pieces \(P_j\), the polygonal cylinder \(\Omega_{\mathcal B}\), and the endpoint phases \(L_j(v;\omega),U_j(v;\omega)\) are as defined in \eqref{eq:polygonal_cumulative_data}--\eqref{eq:lower_upper_height_main}.

%%%%%%%%%%%%%%%%%%%%%%%%%%%%%%%%%%%%%%%%
\subsection{Reduction of the polygonal cylinder identity}

Assume first that the identity with phase \(e^{-ix\cdot\zeta_\theta(\eta)}\) holds. Let
\begin{equation*}
G(x)=\sum_{j=1}^N p_j(Y(x))\chi_{P_j}(x),\quad p_j\in\mathbb C[y_1,\ldots,y_{n-1}].
\end{equation*}
On the \(j\)-th piece, write \(t=t_{j-1}+\tau\), \(0<\tau<h_j\). Then
\begin{equation*}
\Phi(y,t)=x_0+y+r_{j-1}+\tau(\gamma_j+\theta).
\end{equation*}
The Jacobian of \((y,\tau)\mapsto\Phi(y,t_{j-1}+\tau)\) is \(1\). Put \(\rho=|\eta|\) and $\lambda_j(\eta):=\rho-i\gamma_j\cdot\eta$. Dropping the common nonzero factor \(e^{-ix_0\cdot\zeta_\theta(\eta)}\), Fubini's theorem gives
\begin{align*}
0&=\sum_{j=1}^N
e^{-ir_{j-1}\cdot\zeta_\theta(\eta)}\int_D p_j(y)e^{-i\eta\cdot y}\,dy
\int_0^{h_j}e^{\tau\lambda_j(\eta)}\,d\tau \notag\\
&=\sum_{j=1}^N\frac{e^{-ir_j\cdot\zeta_\theta(\eta)}
-e^{-ir_{j-1}\cdot\zeta_\theta(\eta)}}{\lambda_j(\eta)}
\int_D p_j(y)e^{-i\eta\cdot y}\,dy.
\end{align*}
Define the transverse Fourier--Laplace transforms
\begin{equation*}
H_j(z):=\int_Dp_j(y)e^{z\cdot y}\,dy, \quad z\in\mathbb C^m.
\end{equation*}
Then
\begin{equation}\label{eq:BB_reduced_minus}
\sum_{j=1}^N\frac{e^{-ir_j\cdot\zeta_\theta(\eta)}
-e^{-ir_{j-1}\cdot\zeta_\theta(\eta)}}{\lambda_j(\eta)}H_j(-i\eta)=0,
\quad \rho=|\eta|.
\end{equation}

For the opposite sign, after removing the common factor
\(e^{ix_0\cdot\zeta_\theta(\eta)}\), the same computation gives
\begin{equation}\label{eq:BB_reduced_plus}
\sum_{j=1}^N\frac{e^{ir_{j-1}\cdot\zeta_\theta(\eta)}
-e^{ir_j\cdot\zeta_\theta(\eta)}}{\lambda_j(\eta)}H_j(i\eta)=0,\quad \rho=|\eta|,
\end{equation}
where \(\operatorname{Re}\lambda_j(\eta)=\rho>0\), so no denominator vanishes
for \(\eta\ne0\). Along \(\eta=\rho\omega\), the endpoint exponentials in these
identities are governed by \(U_j(v;\omega)\) and \(L_j(v;\omega)\) via the
vertex expansion of \(H_j\).

The reduced expressions in \eqref{eq:BB_reduced_minus} and
\eqref{eq:BB_reduced_plus} are real analytic on
\(\theta^\perp\setminus\{0\}\). Let \(\mathcal C_U\) be a connected component
of \(\theta^\perp\setminus\{0\}\) such that $U\cap\mathcal C_U\neq\varnothing$.
Such a component exists since \(U\) is nonempty. If either reduced expression
vanishes on \(U\), then it vanishes on the nonempty open subset
\(U\cap\mathcal C_U\) of \(\mathcal C_U\), hence on all of \(\mathcal C_U\) by
real-analytic continuation. Set $\mathcal O_U:=\mathcal C_U\cap\mathbb S^{n-2}$.
Thus, for every \(\omega\in\mathcal O_U\), the ray $\eta=\rho\omega$, $\rho>0$,
is admissible for the large-frequency separation argument.

%%%%%%%%%%%%%%%%%%%%%%%%%%%%%%%%%%%%%%%%%%%%%%%%%%%%%%

\subsection{Weighted Brion--Barvinok vertex expansion}

We next recall the precise weighted vertex expansion needed below. The
unweighted ingredient is Barvinok's Theorem~2.6 \cite{barvinok1993computing}, which gives the vertex-cone formula for exponential integrals. In Step~3 below, we derive the weighted identity from that meromorphic formula by applying the finite-order differential operator \(p(\partial_z)\). We spell out the simple-polytope form because the explicit rational coefficients are used later. For a modern Fourier-analytic treatment of polytopal transforms and their vertex structure, see also \cite{machado2023null}.

\begin{lemma}\label{lem:weighted_BB_vertex_expansion}
Let \(D\subset\mathbb R^{n-1}\) be a compact simple convex polytope with nonempty
interior, and let \(p\in\mathbb C[y_1,\ldots,y_{n-1}]\). For each vertex
\(v\in V(D)\), let \(u_{v,1},\ldots,u_{v,n-1}\) be the inward edge directions
generating the tangent cone
\begin{equation*}
C_v=\left\{v+\sum_{k=1}^{n-1} s_ku_{v,k}:s_k\ge0\right\}.
\end{equation*}
Then there exist rational functions \(R_{D,p,v}(z)\), \(v\in V(D)\), such that
\begin{equation}\label{eq:weighted_BB_expansion}
\int_D p(y)e^{z\cdot y}\,dy=\sum_{v\in V(D)}e^{z\cdot v}R_{D,p,v}(z),
\end{equation}
as a meromorphic identity in \(z\in\mathbb C^{n-1}\). If
\begin{equation*}
p\left(v+\sum_{k=1}^{n-1} s_ku_{v,k}\right)
=\sum_{\alpha\in\mathbb N_0^{n-1}}c_\alpha^{(v)}s^\alpha,
\end{equation*}
then
\begin{equation}\label{eq:explicit_R_Dpv}
R_{D,p,v}(z)=|\det(u_{v,1},\ldots,u_{v,n-1})|
\sum_\alpha c_\alpha^{(v)}\frac{(-1)^{|\alpha|+n-1}\alpha!}
{\prod_{k=1}^{n-1}(z\cdot u_{v,k})^{\alpha_k+1}}.
\end{equation}
Moreover, if \(p\not\equiv0\), then
\begin{equation*}
R_{D,p,v}\not\equiv0, \quad \forall \,v\in V(D).
\end{equation*}
\end{lemma}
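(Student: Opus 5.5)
The plan follows the route already announced before the lemma: first the unweighted vertex-cone formula, then the weight $p(y)$ obtained by differentiation, then nonvanishing from the explicit formula.

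\emph{Unweighted formula.} For $p\equiv1$, invoke Barvinok's Theorem~2.6 \cite{barvinok1993computing}. It says that, as a meromorphic identity,
\[
\int_D e^{z\cdot y}\,dy=\sum_{v\in V(D)}\int_{C_v}e^{z\cdot y}\,dy,
\]
where each cone integral is understood as the meromorphic continuation from the open set on which it converges. For a simple polytope each tangent cone $C_v$ is simplicial. The substitution $y=v+\sum_k s_ku_{v,k}$ has Jacobian $|\det(u_{v,1},\ldots,u_{v,n-1})|$, and the integral factorizes into one-dimensional Laplace integrals:
\[
\int_{C_v}e^{z\cdot y}\,dy=e^{z\cdot v}\,|\det(u_{v,k})|\prod_{k=1}^{n-1}\frac{-1}{z\cdot u_{v,k}},
\]
valid where $\operatorname{Re}(z\cdot u_{v,k})<0$.

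\emph{Adding the weight.} The same change of variables applies with $p$ inserted. On the convergence region, expand $p$ as $\sum_\alpha c^{(v)}_\alpha s^\alpha$ and use
\[
\int_0^\infty s^a e^{sw}\,ds=\frac{(-1)^{a+1}a!}{w^{a+1}},\qquad \operatorname{Re}w<0,
\]
in each variable. This gives exactly the rational function \eqref{eq:explicit_R_Dpv} times $e^{z\cdot v}$.

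To transfer the weight to the polytope identity, apply $p(\partial_z)$ to the unweighted identity. On the left this produces $\int_D p(y)e^{z\cdot y}\,dy$, by Lemma~\ref{lem:H_entire_TR}. On the right it produces, term by term, $p(\partial_z)$ applied to each cone contribution. On the convergence region this equals $\int_{C_v}p(y)e^{z\cdot y}\,dy$ by differentiation under the integral. By uniqueness of meromorphic continuation, the identity persists on all of $\mathbb C^{n-1}$ away from the hyperplanes $z\cdot u_{v,k}=0$.

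There is one subtlety in using Barvinok's theorem for the weighted version. The cones have different convergence regions, so there is no common $z$ at which all the cone integrals converge simultaneously. The identity must therefore be read meromorphically, and differentiating a meromorphic identity is legitimate off the polar set. The weighted identity then holds off the union of polar hyperplanes, hence as a meromorphic identity.

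\emph{Nonvanishing of $R_{D,p,v}$ (the main obstacle).} The difficulty is excluding cancellation among the $\alpha$-terms in \eqref{eq:explicit_R_Dpv}. Since $p\not\equiv0$ and $s\mapsto v+\sum_k s_ku_{v,k}$ is an affine bijection, some coefficient $c^{(v)}_\alpha$ is nonzero. The plan is to change to the linear coordinates $w_k=z\cdot u_{v,k}$, which is allowed because the $u_{v,k}$ are linearly independent. In these coordinates
\[
R_{D,p,v}=|\det(u_{v,k})|\sum_\alpha c^{(v)}_\alpha(-1)^{|\alpha|+n-1}\alpha!\,w^{-\alpha-\mathbf 1}.
\]
This is a finite Laurent polynomial in $w$ with distinct monomials for distinct $\alpha$, so it cannot vanish identically when some $c^{(v)}_\alpha\neq0$. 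Hence $R_{D,p,v}\not\equiv0$ for every vertex $v$.
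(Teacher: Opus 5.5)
Your proposal is correct and follows essentially the same route as the paper's proof: the weighted simplicial cone integrals are evaluated explicitly on their convergence regions, Barvinok's unweighted vertex-cone identity is differentiated termwise by $p(\partial_z)$ and continued meromorphically off the polar hyperplanes, and $R_{D,p,v}\not\equiv0$ follows from the linear independence of the distinct Laurent monomials $w^{-\alpha-\mathbf 1}$ in the coordinates $w_k=z\cdot u_{v,k}$. The paper reaches the last step by multiplying through by a large monomial to get a polynomial identity, which is the same argument; your remark that no single $z$ lies in all the convergence regions (they place $\operatorname{Re}z$ in the disjoint interiors of the normal cones of distinct vertices) is also correct, and it is exactly why the identity must be read meromorphically.
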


\begin{proof}
\noindent {Step 1.} Set
\begin{equation*}
H_{D,p}(z):=\int_D p(y)e^{z\cdot y}\,dy,\quad z\in\mathbb C^{n-1}.
\end{equation*}
Since \(D\) is compact and \(p\) is polynomial, the integral converges
absolutely for every \(z\in\mathbb C^{n-1}\). Differentiation under the integral
sign gives
\begin{equation*}
\partial_z^\alpha H_{D,p}(z)=\int_D y^\alpha p(y)e^{z\cdot y}\,dy.
\end{equation*}
Hence \(H_{D,p}\) is entire. The Brion--Barvinok expansion represents it as a
sum of meromorphic vertex terms whose poles cancel in the total sum.

\noindent {Step 2.}
Fix a vertex \(v\in V(D)\). Since \(D\) is simple, the tangent cone at \(v\) is generated by \(m\) linearly independent inward edge directions \(u_{v,1},\ldots,u_{v,n-1}\):
\begin{equation*}
C_v=\left\{v+\sum_{k=1}^{n-1} s_k u_{v,k}:s_k\ge0\right\}.
\end{equation*}
In the region $\operatorname{Re}(z\cdot u_{v,k})<0$, $k=1,\cdots,n-1$, the cone integral \(\int_{C_v}p(y)e^{z\cdot y}\,dy\) converges absolutely. Using $
y=v+\sum_{k=1}^{n-1} s_k u_{v,k}$, $s_k\ge0$, we get
\begin{equation}\label{eq:cone_integral_computation_clear}
\int_{C_v}p(y)e^{z\cdot y}\,dy=e^{z\cdot v}|\det(u_{v,1},\cdots,u_{v,m})| \int_{\mathbb R_+^{n-1}}p\left(v+\sum_{k=1}^{n-1} s_k u_{v,k}\right)e^{\sum_{k=1}^{n-1} s_k z\cdot u_{v,k}}\,ds.
\end{equation}
Expand
\begin{equation*}
p\left(v+\sum_{k=1}^{n-1} s_k u_{v,k}\right)
=\sum_{\alpha\in\mathbb N_0^{n-1}}c_\alpha^{(v)}s^\alpha .
\end{equation*}
Substituting this into \eqref{eq:cone_integral_computation_clear} and using Fubini's theorem,
\begin{equation*}
\int_{C_v}p(y)e^{z\cdot y}\,dy=e^{z\cdot v}|\det(u_{v,1},\ldots,u_{v,n-1})|
\sum_\alpha c_\alpha^{(v)}\prod_{k=1}^{n-1}\int_0^\infty 
s_k^{\alpha_k}e^{s_k z\cdot u_{v,k}}\,ds_k .
\end{equation*}
It follows from 
\begin{equation*}
\int_0^\infty s^\ell e^{as}\,ds=\frac{(-1)^{\ell+1}\ell!}{a^{\ell+1}},\quad \operatorname{Re}a<0, \quad  \ell \geq 1,
\end{equation*}
that in the convergence region,
\begin{equation*}
\int_{C_v}p(y)e^{z\cdot y}\,dy=e^{z\cdot v}R_{D,p,v}(z),
\end{equation*}
where
\begin{equation*}
R_{D,p,v}(z)=|\det(u_{v,1},\ldots,u_{v,n-1})|\sum_\alpha c_\alpha^{(v)}
\frac{(-1)^{|\alpha|+n-1}\alpha!} {\prod_{k=1}^{n-1}(z\cdot u_{v,k})^{\alpha_k+1}} .
\end{equation*}
The right-hand side is rational in \(z\), so it gives the meromorphic continuation
\begin{equation*}
\int_{C_v}^{\mathrm{mer}}p(y)e^{z\cdot y}\,dy=e^{z\cdot v}R_{D,p,v}(z).
\end{equation*}

\noindent {Step 3.}
For a tangent cone \(C_v\), the integral $\int_{C_v} p(y)e^{z\cdot y}\,dy$
converges absolutely when $\operatorname{Re}(z\cdot u_{v,k})<0$, $k=1,\cdots,n-1$.
Let \(\int_{C_v}^{\mathrm{mer}}\) denote its meromorphic continuation; in the
convergence region it agrees with the ordinary integral, and elsewhere it is
defined by the rational expression from Step 2.

Write $p(y)=\sum_\beta a_\beta y^\beta$ and $p(\partial_z):=\sum_\beta a_\beta \partial_z^\beta$. Barvinok's unweighted formula \cite[Theorem~2.6]{barvinok1993computing} gives
\begin{equation}\label{eq:unweighted_BB_meromorphic_identity}
\int_D e^{z\cdot y}\,dy=\sum_{v\in V(D)}\int_{C_v}^{\mathrm{mer}} e^{z\cdot y}\,dy.
\end{equation}
Since \(D\) is compact, differentiating under the integral sign yields
\begin{equation*}
p(\partial_z)\int_D e^{z\cdot y}\,dy=\int_D p(y)e^{z\cdot y}\,dy.
\end{equation*}
For fixed \(v\), the same holds in the convergence region, so
\begin{equation*}
p(\partial_z)\int_{C_v} e^{z\cdot y}\,dy=\int_{C_v} p(y)e^{z\cdot y}\,dy.
\end{equation*}
Both sides continue meromorphically, hence
\begin{equation*}
p(\partial_z)\int_{C_v}^{\mathrm{mer}} e^{z\cdot y}\,dy
=\int_{C_v}^{\mathrm{mer}} p(y)e^{z\cdot y}\,dy.
\end{equation*}
Applying \(p(\partial_z)\) to \eqref{eq:unweighted_BB_meromorphic_identity}
and using this termwise gives
\begin{equation*}
\int_D p(y)e^{z\cdot y}\,dy=\sum_{v\in V(D)}
\int_{C_v}^{\mathrm{mer}} p(y)e^{z\cdot y}\,dy.
\end{equation*}
By Step 2,
\begin{equation*}
\int_D p(y)e^{z\cdot y}\,dy=\sum_{v\in V(D)} e^{z\cdot v}R_{D,p,v}(z),
\end{equation*}
which is \eqref{eq:weighted_BB_expansion}.

The terms \(e^{z\cdot v}R_{D,p,v}(z)\) may have poles on \(z\cdot u_{v,k}=0\), but the full sum is entire, since it equals \(z\mapsto \int_D p(y)e^{z\cdot y}\,dy\). Thus the identity is meromorphic and all poles cancel in the sum.

\noindent {Step 4.}
It remains to prove that if \(p\not\equiv0\), then $R_{D,p,v}\not\equiv0$,
$\forall \,v\in V(D)$. Suppose \(R_{D,p,v}\equiv0\) for some vertex \(v\). Set
$X_k:=z\cdot u_{v,k}$, $k=1,\cdots,n-1$. Since \(u_{v,1},\cdots,u_{v,n-1}\) are linearly independent, \(X_1,\cdots,X_{n-1}\) are linear coordinates on \(\mathbb C^{n-1}\). Then
\begin{equation*}
0\equiv\sum_\alpha c_\alpha^{(v)} \frac{(-1)^{|\alpha|+n-1}\alpha!}
{X_1^{\alpha_1+1}\cdots X_{n-1}^{\alpha_{n-1}+1}}.
\end{equation*}
Choose \(M\) larger than every component of every multi-index \(\alpha\) in the
finite sum, and multiply by \(X_1^{M+1}\cdots X_{n-1}^{M+1}\). This gives the
polynomial identity
\begin{equation*}
0\equiv\sum_\alpha c_\alpha^{(v)}(-1)^{|\alpha|+n-1}\alpha!
X_1^{M-\alpha_1}\cdots X_{n-1}^{M-\alpha_{n-1}}.
\end{equation*}
By linear independence of monomials, $c_\alpha^{(v)}=0$, $\forall \,\alpha$.
Hence
\begin{equation*}
p\left(v+\sum_{k=1}^{n-1} s_k u_{v,k}\right)\equiv 0,
\end{equation*}
as a polynomial in \(s\). Since $s\longmapsto v+\sum_{k=1}^{n-1} s_k u_{v,k}$ is an invertible affine change of coordinates, this implies \(p\equiv0\), contradicting the assumption. Therefore \(R_{D,p,v}\not\equiv0\) for every vertex \(v\).
\end{proof}

%%%%%%%%%%%%%%%%%%%%%%%%%%%%%%%%%%%%%%%%%%%%%%%%%

\subsection{Generic rays and rational coefficients}

The next lemmas close the main technical gap in the peeling argument. It is not
enough that a nonzero rational function be nonzero on a generic ray in
\(\mathbb C^{n-1}\): the proof requires the purely imaginary case \(\gamma=0\) of
the physical ray family
\begin{equation*}
z=\rho(\gamma-i\omega),\quad \rho>0,\quad \omega\in\mathcal O_U.
\end{equation*}
We keep the slightly more general \(\gamma\)-dependent statement, since its
proof is identical and it records the natural physical ray family.
\begin{lemma}\label{lem:homogeneous_polynomial_physical_ray}
Let \(P\) be a nonzero homogeneous polynomial on \(\mathbb C^{n-1}\), let
\(\gamma\in\mathbb R^{n-1}\), \(\sigma\in\{1,-1\}\), and let
\(\mathcal O\subset\mathbb S^{n-2}\) be nonempty and relatively open. Then
\(P(\gamma-i\sigma\omega)\) is not identically zero on \(\mathcal O\). Hence
\begin{equation*}
\{\omega\in\mathcal O:P(\gamma-i\sigma\omega)=0\},
\end{equation*}
has empty interior in \(\mathcal O\).
\end{lemma}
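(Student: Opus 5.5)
We argue by contradiction: assume \(P(\gamma-i\sigma\omega)=0\) for all \(\omega\in\mathcal O\). Set \(m=n-1\ge1\) and \(k=\deg P\). We pass to the polynomial
\begin{equation*}
Q(w):=P(\gamma-i\sigma w),\qquad w\in\mathbb C^m .
\end{equation*}
Since \(\gamma\) is real and fixed, \(Q\) is a polynomial whose top-degree homogeneous part is \((-i\sigma)^kP(w)\), which is nonzero. The assumption says that \(Q\) vanishes on a nonempty relatively open subset of the real unit sphere. The difficulty is that \(Q\) is no longer homogeneous, and a polynomial may vanish on the whole sphere (e.g.\ \(|w|^2-1\)). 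So the argument must use the specific shape of \(Q\) as a translate of a homogeneous polynomial.

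\textbf{The case \(\gamma=0\).} Here \(Q(\omega)=(-i\sigma)^kP(\omega)\). Homogeneity gives \(P(r\omega)=r^kP(\omega)\), so \(P\) would vanish on the open cone \(\{r\omega:r>0,\ \omega\in\mathcal O\}\subset\mathbb R^m\). A polynomial vanishing on a nonempty open subset of \(\mathbb R^m\) is zero, so \(P\equiv0\), a contradiction. The same argument settles \(m=1\) for every \(\gamma\): then \(P(w)=cw^k\) with \(c\ne0\), and \(\gamma-i\sigma\omega\ne0\) because \(\omega=\pm1\).

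\textbf{The case \(m=2\), \(\gamma\ne0\).} Parametrize \(\omega=(\cos s,\sin s)\) and use the null coordinates \(u=w_1+iw_2\), \(u'=w_1-iw_2\). On the circle \(u=e^{is}\) and \(u'=u^{-1}\). Write \(P=\sum_j c_ju^ju'^{k-j}\) and \(g=\gamma_1+i\gamma_2\), so that \(\bar g=\gamma_1-i\gamma_2\ne0\). Then
\begin{equation*}
Q(\omega)=\sum_j c_j\,(g-i\sigma e^{is})^j(\bar g-i\sigma e^{-is})^{k-j}.
\end{equation*}
This is a trigonometric polynomial in \(s\), and it vanishes on an open arc. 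By analyticity it then vanishes identically, so every Laurent coefficient in \(e^{is}\) is zero. Let \(j_*\) be the largest index with \(c_{j_*}\neq0\). The coefficient of \(e^{ij_*s}\) receives a contribution only from the term \(j=j_*\), namely \(c_{j_*}(-i\sigma)^{j_*}\bar g^{\,k-j_*}\neq0\). This contradicts the vanishing of all coefficients.

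\textbf{The case \(m\ge3\), \(\gamma\ne0\): induction on \(k\).} The base case \(k=0\) is trivial. For the inductive step, the real sphere patch is Zariski dense in the irreducible complex quadric \(\mathcal Q=\{w\cdot w=1\}\), so \(Q\) vanishes on all of \(\mathcal Q\). Take a nonzero null vector \(v\), meaning \(v\cdot v=0\). Because \(m\ge3\), the bilinear form restricted to \(v^\perp\) has radical \(\mathbb Cv\) and nondegenerate quotient of dimension \(m-2\ge1\). Hence there is \(a\in v^\perp\) with \(a\cdot a=1\), and then the whole line \(a+tv\) lies in \(\mathcal Q\). The function \(t\mapsto Q(a+tv)\) is a polynomial vanishing identically, and its \(t^k\)-coefficient is \((-i\sigma)^kP(v)\). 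Thus \(P\) vanishes on the null cone \(\{w\cdot w=0\}\). That cone is irreducible for \(m\ge3\), so \(w\cdot w\) divides \(P\): write \(P=(w\cdot w)P_1\) with \(P_1\) homogeneous of degree \(k-2\).

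It follows that \(Q=A\cdot P_1(\gamma-i\sigma\,\cdot)\), where
\begin{equation*}
A(w)=(\gamma-i\sigma w)\cdot(\gamma-i\sigma w)=|\gamma|^2-2i\sigma\gamma\cdot w-w\cdot w .
\end{equation*}
On the real sphere, \(A=|\gamma|^2-1-2i\sigma\gamma\cdot\omega\). This vanishes only where \(\gamma\cdot\omega=0\), which is a proper real-analytic subset of the sphere since \(\gamma\ne0\). Hence \(A\) is nonzero on a smaller open set \(\mathcal O'\subset\mathcal O\). On \(\mathcal O'\) the polynomial \(P_1(\gamma-i\sigma\omega)\) must vanish, and the induction hypothesis applied to \(P_1\) gives a contradiction.

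Finally, the empty-interior statement is just a restatement of non-vanishing, applied to every open subset of \(\mathcal O\).

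The main obstacle is the step showing that \(P\) vanishes on the null cone. It depends on the dimension count \(m\ge3\) and on the irreducibility of both the null cone and the quadric, and this is exactly why the case \(m=2\) has to be handled separately by the Laurent-coefficient argument.
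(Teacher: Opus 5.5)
Your proof is correct, but it takes a considerably longer route than the paper. The paper never passes to the non-homogeneous polynomial $Q$. Instead it applies the homogeneity of $P$ to the whole point $\gamma-i\sigma\omega$, so that $P(\rho(\gamma-i\sigma\omega))=\rho^{k}P(\gamma-i\sigma\omega)=0$ for $\rho>0$. It then notes that $\rho(\gamma-i\sigma\omega)=|\xi|\gamma-i\sigma\xi$, with $\xi=\rho\omega$ ranging over an open cone in $\mathbb R^{n-1}$. The map $\xi\mapsto|\xi|\gamma-i\sigma\xi$ is injective (its imaginary part is $-\sigma\xi$), and by the same computation as in Lemma~\ref{lem:MU_totally_real_TR} its image is maximally totally real. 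Lemma~\ref{lem:holomorphic_uniqueness_TR} then gives $P\equiv0$ at once, uniformly in $n$ and $\gamma$. The obstacle you flagged (that $Q$ is not homogeneous, and that polynomials like $|w|^2-1$ vanish on the sphere) disappears once you scale before translating: the cone over the translated sphere patch is an $(n-1)$-dimensional totally real set, not a piece of a quadric.

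Your case analysis does go through. The $m=2$ Laurent argument is clean: only $j=j_*$ reaches $e^{ij_*s}$, with coefficient $c_{j_*}(-i\sigma)^{j_*}\bar g^{\,k-j_*}\neq0$. For $m\ge3$ the chain is sound: lines $a+tv\subset\mathcal Q$, vanishing of $P$ on the null cone, $w\cdot w\mid P$ by irreducibility and the Nullstellensatz (which for $k=1$ simply forces $P=0$), and discarding the great sphere $\{\gamma\cdot\omega=0\}$ where $A$ may vanish. Note, though, that the one step you assert without proof, the Zariski density of a real sphere patch in $\{w\cdot w=1\}$, is itself a totally real uniqueness statement, since the real sphere is maximally totally real in the connected complex manifold $\mathcal Q$. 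Alternatively it follows from analytic continuation on $\mathbb S^{m-1}$ together with the fact that $w\cdot w-1$ generates the real ideal of the sphere. So your route does not avoid the analytic principle; it applies it on a curved ambient space and then adds algebraic geometry and a dimension split. What it buys is a fully elementary treatment of $m\le2$ and the extra information that any counterexample would have to be divisible by $w\cdot w$. What the paper's route buys is brevity, uniformity, and direct reuse of the Section~\ref{sec:totally_real_core} machinery.
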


\begin{proof}
Assume, to the contrary, that \(P(\gamma-i\sigma\omega)=0\) on a nonempty
relatively open subset \(\mathcal O_0\subset\mathcal O\). By homogeneity,
\begin{equation*}
P(|\xi|\gamma-i\sigma\xi)=0,
\end{equation*}
for all \(\xi=\rho\omega\), \(\rho>0\), \(\omega\in\mathcal O_0\), hence on an
open cone in \(\mathbb R^{n-1}\setminus\{0\}\).

The map
\begin{equation*}
\xi\longmapsto |\xi|\gamma-i\sigma\xi,
\end{equation*}
parametrizes a real-analytic maximally totally real \((n-1)\)-submanifold of
\(\mathbb C^{n-1}\). Indeed, it is injective since its imaginary part
\(-\sigma\xi\) determines \(\xi\). At \(\xi\) with \(e=\xi/|\xi|\), its
differential is $a\longmapsto \gamma(e\cdot a)-i\sigma a$. If this equals \(i(\gamma(e\cdot b)-i\sigma b)\) for some \(b\), then
\begin{equation*}
\sigma b=\gamma(e\cdot a),\quad -\sigma a=\gamma(e\cdot b).
\end{equation*}
Thus \(a=-\gamma(e\cdot\gamma)(e\cdot a)\), and taking the scalar product
with \(e\) gives
\begin{equation*}
\bigl(1+(e\cdot\gamma)^2\bigr)(e\cdot a)=0.
\end{equation*}
Hence \(e\cdot a=0\), so \(a=b=0\). Therefore the tangent space is totally
real, and the submanifold is a holomorphic uniqueness set.

Since \(P\) is entire and vanishes on a nonempty open piece of this
submanifold, \(P\equiv0\), contradicting the assumption. Hence
\(P(\gamma-i\sigma\omega)\not\equiv0\) on \(\mathcal O\), and its zero set
has empty interior in \(\mathcal O\).
\end{proof}

%%%

The preceding lemma lets us avoid zeros of a homogeneous numerator along a
physical ray. We next identify the leading power of the rational coefficient at
one vertex; this yields the two-sided estimate on a generic ray.

\begin{lemma}\label{lem:vertex_coefficient_asymptotics}
Let \(D\), \(p\), \(v\), and \(R_{D,p,v}\) be as in
Lemma~\ref{lem:weighted_BB_vertex_expansion}, and assume \(p\not\equiv0\).
Write
\begin{equation*}
p\left(v+\sum_{k=1}^{n-1} s_k u_{v,k}\right)=\sum_{\alpha} c_\alpha^{(v)} s^\alpha,
\end{equation*}
and set $\kappa_{p,v}:=\min\{|\alpha|:c_\alpha^{(v)}\ne0\}$. Then
\begin{equation}\label{eq:R_vertex_asymptotic}
R_{D,p,v}(\rho\lambda)=\rho^{-(n-1)-\kappa_{p,v}}
R_{D,p,v}^{[\kappa]}(\lambda)+O(\rho^{-(n-1)-\kappa_{p,v}-1}),\quad \mbox{as~} \rho\to+\infty,
\end{equation}
uniformly on compact subsets of $\left\{\lambda:\prod_{k=1}^{n-1}(\lambda\cdot u_{v,k})\ne0\right\}$. Here
\begin{equation}\label{eq:R_leading_homogeneous_rational}
R_{D,p,v}^{[\kappa]}(\lambda)
=|\det(u_{v,1},\ldots,u_{v,n-1})|\sum_{|\alpha|=\kappa_{p,v}}
c_\alpha^{(v)}\frac{(-1)^{\kappa_{p,v}+n-1}\alpha!}
{\prod_{k=1}^{n-1}(\lambda\cdot u_{v,k})^{\alpha_k+1}},
\end{equation}
is a nonzero homogeneous rational function of degree
\(-(n-1)-\kappa_{p,v}\).
\end{lemma}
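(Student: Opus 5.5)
The plan is to work directly from the explicit formula \eqref{eq:explicit_R_Dpv} in Lemma~\ref{lem:weighted_BB_vertex_expansion}. Since \(p\) is a polynomial, the expansion of \(p(v+\sum_k s_ku_{v,k})\) has only finitely many nonzero coefficients \(c_\alpha^{(v)}\). I will group them by total degree \(\ell=|\alpha|\), for \(\kappa_{p,v}\le \ell\le \deg p\). Define
\begin{equation*}
R^{[\ell]}(\lambda):=|\det(u_{v,1},\ldots,u_{v,n-1})|\sum_{|\alpha|=\ell}c_\alpha^{(v)}\frac{(-1)^{\ell+n-1}\alpha!}{\prod_{k=1}^{n-1}(\lambda\cdot u_{v,k})^{\alpha_k+1}}.
\end{equation*}
Each summand has denominator of total degree \(\sum_k(\alpha_k+1)=\ell+n-1\), so \(R^{[\ell]}\) is homogeneous of degree \(-(n-1)-\ell\). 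This gives the exact finite decomposition
\begin{equation*}
R_{D,p,v}(\rho\lambda)=\sum_{\ell=\kappa_{p,v}}^{\deg p}\rho^{-(n-1)-\ell}R^{[\ell]}(\lambda),\quad \rho>0,
\end{equation*}
valid wherever \(\prod_k(\lambda\cdot u_{v,k})\neq0\). The leading term is \(R^{[\kappa]}=R_{D,p,v}^{[\kappa]}\).

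For the remainder, I will take a compact set \(K\) inside \(\{\prod_k(\lambda\cdot u_{v,k})\ne0\}\). On \(K\), each \(|\lambda\cdot u_{v,k}|\) is bounded below by a positive constant, by continuity and compactness. Hence every \(R^{[\ell]}\) is bounded on \(K\). For \(\rho\ge1\), the terms with \(\ell\ge\kappa_{p,v}+1\) are then bounded by \(C_K\rho^{-(n-1)-\kappa_{p,v}-1}\), since only finitely many \(\ell\) occur. This gives \eqref{eq:R_vertex_asymptotic} uniformly on \(K\). In fact the expansion terminates, so the error is a finite sum of higher negative powers.

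The only substantive point is that \(R^{[\kappa]}_{D,p,v}\not\equiv0\). I will argue as in Step~4 of Lemma~\ref{lem:weighted_BB_vertex_expansion}. The \(u_{v,k}\) are linearly independent, so \(X_k=\lambda\cdot u_{v,k}\) are linear coordinates on \(\mathbb C^{n-1}\). Multiply \(R^{[\kappa]}\) by \(\prod_k X_k^{M+1}\), where \(M\) exceeds every \(\alpha_k\) with \(|\alpha|=\kappa_{p,v}\). This produces the polynomial \(\sum_{|\alpha|=\kappa}c_\alpha^{(v)}(-1)^{\kappa+n-1}\alpha!\,X^{M-\alpha}\) with distinct monomials \(X^{M-\alpha}\). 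By the definition of \(\kappa_{p,v}\) as a minimum, at least one coefficient \(c_\alpha^{(v)}\) with \(|\alpha|=\kappa_{p,v}\) is nonzero, so this polynomial is nonzero. Here \(\kappa_{p,v}\) is well defined because \(p\not\equiv0\), and the affine change of variables shows that some \(c_\alpha^{(v)}\neq0\). Also \(|\det|\ne0\), so \(R^{[\kappa]}_{D,p,v}\not\equiv0\).

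I expect no real obstacle. The points needing care are the uniformity, which comes from the lower bound on the denominators over a compact set, and keeping the degree count \(\sum_k(\alpha_k+1)=|\alpha|+n-1\) consistent with the sign factor \((-1)^{|\alpha|+n-1}\), which is constant on the leading layer.
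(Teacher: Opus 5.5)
Your proposal is correct and follows essentially the same route as the paper. You substitute $z=\rho\lambda$ into the explicit formula for $R_{D,p,v}$, group the terms by $|\alpha|$, and show the leading layer is nonzero by clearing denominators in the coordinates $X_k=\lambda\cdot u_{v,k}$ and using linear independence of distinct monomials. Your exponent $M+1$ in place of the paper's $\kappa_{p,v}+1$, and your explicit compactness argument for uniformity, are only cosmetic refinements of the paper's argument.
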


\begin{proof}
Write \(\kappa=\kappa_{p,v}\).  The expansion
\eqref{eq:R_vertex_asymptotic} follows from
\eqref{eq:explicit_R_Dpv} by setting \(z=\rho\lambda\) and grouping terms
according to \(|\alpha|\).  The terms with \(|\alpha|=\kappa\) give the
leading order \(\rho^{-(n-1)-\kappa}\), while all remaining terms are
\(O(\rho^{-(n-1)-\kappa-1})\).

It remains to show that \(R_{D,p,v}^{[\kappa]}\not\equiv0\).  In the
coordinates \(X_k=\lambda\cdot u_{v,k}\), the expression
\eqref{eq:R_leading_homogeneous_rational} becomes
\begin{equation*}
\sum_{|\alpha|=\kappa}c_\alpha^{(v)} \frac{(-1)^{\kappa+n-1}\alpha!}
{X_1^{\alpha_1+1}\cdots X_{n-1}^{\alpha_{n-1}+1}}.
\end{equation*}
If \(R_{D,p,v}^{[\kappa]} \equiv 0\), multiplying by
\(X_1^{\kappa+1}\cdots X_{n-1}^{\kappa+1}\) would give
\begin{equation*}
\sum_{|\alpha|=\kappa}
c_\alpha^{(v)}(-1)^{\kappa+n-1}\alpha!
\prod_{k=1}^{n-1} X_k^{\kappa-\alpha_k}=0.
\end{equation*}
Since \(\alpha_k\le\kappa\), these are distinct monomials.  Hence all
their coefficients vanish, so \(c_\alpha^{(v)}=0\) for every
\(|\alpha|=\kappa\), contradicting the definition of \(\kappa\).
Therefore \(R_{D,p,v}^{[\kappa]}\not\equiv0\).
\end{proof}

%%%%

We now choose one direction that works for every vertex and every active
segment. Only finitely many polar hyperplanes, numerator zero sets, and
coincident vertex phases need to be excluded. 

\begin{lemma}\label{lem:finite_family_generic_ray}
Let \(\mathcal O\subset\mathbb S^{n-2}\) be nonempty and relatively open, and let $\{(\gamma_\ell,\sigma_\ell,D_\ell,p_\ell,v_\ell)\}_{\ell=1}^M$ be a finite family with $\gamma_\ell\in\mathbb R^{n-1}$, $\sigma_\ell\in\{\pm1\}$, \(D_\ell\) a compact simple convex polytope, \(p_\ell\not\equiv0\), and \(v_\ell\in V(D_\ell)\). Then there exists \(\omega\in\mathcal O\) such that,
for every \(\ell\),
\begin{equation*}
(\gamma_\ell-i\sigma_\ell\omega)\cdot u_{\ell,v_\ell,k}\ne0,
\quad k=1,\cdots,n-1, \quad R_{D_\ell,p_\ell,v_\ell}^{[\kappa]}
(\gamma_\ell-i\sigma_\ell\omega)\ne0,
\end{equation*}
and
\begin{equation*}
\omega\cdot(v-w)\ne 0, \quad  \forall \,   v\ne w\in V(D_\ell).
\end{equation*}
Consequently, for this \(\omega\), each function $\rho\mapsto
R_{D_\ell,p_\ell,v_\ell}
\bigl(\rho(\gamma_\ell-i\sigma_\ell\omega)\bigr)$ 
has no poles for all sufficiently large \(\rho\) and grows polynomially in
\(\rho\). In fact, for each \(\ell\), there exist constants
\(c_\ell,C_\ell>0\) and \(\rho_\ell>0\) such that
\begin{equation*}
c_\ell\rho^{-(n-1)-\kappa_\ell} \le \left|R_{D_\ell,p_\ell,v_\ell}
\bigl(\rho(\gamma_\ell-i\sigma_\ell\omega)\bigr)\right|
\le C_\ell\rho^{-(n-1)-\kappa_\ell},\quad \rho\ge\rho_\ell,
\end{equation*}
where \(\kappa_\ell=\kappa_{p_\ell,v_\ell}\).
\end{lemma}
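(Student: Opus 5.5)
The plan is a genericity argument. We show that each of the finitely many conditions fails only on a closed subset of \(\mathcal O\) with empty interior. A finite union of closed sets with empty interior still has empty interior, by Baire or simply because finite unions of closed nowhere dense sets are nowhere dense. Hence some \(\omega\in\mathcal O\) avoids them all. The two-sided bound then follows from Lemma~\ref{lem:vertex_coefficient_asymptotics}.

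\emph{Step 1 (polar hyperplanes).} Fix \(\ell\) and \(k\). The map \(\omega\mapsto(\gamma_\ell-i\sigma_\ell\omega)\cdot u_{\ell,v_\ell,k}\) is the restriction of the nonzero linear form \(P(z)=z\cdot u_{\ell,v_\ell,k}\), which is homogeneous of degree one. Lemma~\ref{lem:homogeneous_polynomial_physical_ray} shows that its zero set in \(\mathcal O\) has empty interior; it is closed by continuity. Directly, the imaginary part \(-\sigma_\ell\omega\cdot u\) vanishes only on a great sphere.

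\emph{Step 2 (leading numerators).} By Lemma~\ref{lem:vertex_coefficient_asymptotics}, \(R^{[\kappa]}_{D_\ell,p_\ell,v_\ell}\) is a nonzero homogeneous rational function. Write it as \(P_\ell/Q_\ell\) with \(Q_\ell=\prod_k(\lambda\cdot u_{\ell,v_\ell,k})^{\kappa_\ell+1}\) and \(P_\ell\) a nonzero homogeneous polynomial, namely the polynomial obtained in that proof after clearing denominators. Off the polar set of Step~1, the condition \(R^{[\kappa]}\ne0\) is equivalent to \(P_\ell(\gamma_\ell-i\sigma_\ell\omega)\ne0\). Lemma~\ref{lem:homogeneous_polynomial_physical_ray} again gives a closed exceptional set with empty interior.

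\emph{Step 3 (vertex phases).} For \(v\ne w\in V(D_\ell)\), the set \(\{\omega:\omega\cdot(v-w)=0\}\) is a great subsphere of \(\mathbb S^{n-2}\), hence closed with empty interior. There are finitely many such pairs. Intersecting the complements of all the exceptional sets from Steps~1--3 gives a dense open subset of \(\mathcal O\), which is nonempty. Pick \(\omega\) there.

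\emph{Step 4 (two-sided bound).} Set \(\lambda_\ell=\gamma_\ell-i\sigma_\ell\omega\). This point lies in the open set \(\{\prod_k\lambda\cdot u_{\ell,v_\ell,k}\ne0\}\). Since \(\lambda\cdot u\ne0\) and \(R\) is a sum of terms with denominators \(\prod_k(\rho\lambda_\ell\cdot u_k)^{\alpha_k+1}\), the function \(\rho\mapsto R(\rho\lambda_\ell)\) has no poles for \(\rho>0\). Apply \eqref{eq:R_vertex_asymptotic} at the single point \(\lambda_\ell\), a compact set:
\[
\bigl|R(\rho\lambda_\ell)-\rho^{-(n-1)-\kappa_\ell}R^{[\kappa]}(\lambda_\ell)\bigr|\le C\rho^{-(n-1)-\kappa_\ell-1}.
\]
With \(a_\ell=|R^{[\kappa]}(\lambda_\ell)|>0\), choose \(\rho_\ell\) such that \(C\rho_\ell^{-1}\le a_\ell/2\). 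Then \(c_\ell=a_\ell/2\) and \(C_\ell=3a_\ell/2\) work for \(\rho\ge\rho_\ell\).

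The only delicate point is Step~2. We must ensure that the physical ray family, with purely imaginary direction when \(\gamma=0\), does not lie entirely inside the zero set of the numerator. This is exactly what the totally real argument in Lemma~\ref{lem:homogeneous_polynomial_physical_ray} provides, so the remaining work is bookkeeping.
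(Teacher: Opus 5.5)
Your proposal is correct and follows the paper's proof essentially step for step. Both arguments exclude the polar hyperplanes and the zero set of the cleared numerator of \(R^{[\kappa]}_{D_\ell,p_\ell,v_\ell}\) using Lemma~\ref{lem:homogeneous_polynomial_physical_ray}, remove the finitely many great subspheres \(\omega\cdot(v-w)=0\), choose \(\omega\) outside the resulting finite union of closed sets with empty interior, and read off the two-sided bound from \eqref{eq:R_vertex_asymptotic}; your explicit constants \(c_\ell=a_\ell/2\), \(C_\ell=3a_\ell/2\) and your remark that Baire is unnecessary for a finite union only add detail.
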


\begin{proof}
For each denominator factor, the map $\omega\mapsto(\gamma_\ell-i\sigma_\ell\omega)\cdot u_{\ell,v_\ell,k}$ 
is a nonzero linear function on \(\mathcal O\). By
Lemma~\ref{lem:homogeneous_polynomial_physical_ray}, its zero set has empty
interior in \(\mathcal O\).

Similarly, after putting \(R_{D_\ell,p_\ell,v_\ell}^{[\kappa]}\) over a common
denominator, its numerator is a nonzero homogeneous polynomial. Again by
Lemma~\ref{lem:homogeneous_polynomial_physical_ray}, its zero set in
\(\mathcal O\) has empty interior. There are only finitely many such numerator
and denominator conditions, and the sets $
\{\omega\in\mathcal O:\omega\cdot(v-w)=0\}$, $v\ne w$, are also finite in number and have empty interior. Their union is closed with empty interior, so it cannot cover the Baire space \(\mathcal O\). Hence we may choose \(\omega\) outside this union, giving all required nonvanishing and distinct-phase conditions.

For this \(\omega\), each restricted denominator is nonzero for \(\rho>0\), so
the functions
\begin{equation*}
\rho\mapsto R_{D_\ell,p_\ell,v_\ell}\bigl(\rho(\gamma_\ell-i\sigma_\ell\omega)\bigr),
\end{equation*}
have no poles. The two-sided estimate now follows from
\eqref{eq:R_vertex_asymptotic}--\eqref{eq:R_leading_homogeneous_rational} and
the nonvanishing of the leading coefficient at
\(\gamma_\ell-i\sigma_\ell\omega\).
\end{proof}

%%%

The final ingredient separates a finite sum of oscillatory terms. The next
lemma rules out exponential decay when at least one Laurent coefficient is
nonzero.
\begin{lemma}\label{lem:dominant_oscillatory_laurent_separation}
Let \(\beta_1,\cdots,\beta_M\in\mathbb R\) be pairwise distinct, and let
\(r_1,\cdots,r_M\) be nonzero finite Laurent polynomials in \(\rho\). If
\(\delta>0\), then
\begin{equation}
\label{eq:oscillatory_laurent_exponential_smallness}
\left|\sum_{k=1}^M e^{-i\rho\beta_k}r_k(\rho)\right|
\leq C e^{-\delta\rho}\rho^A,
\end{equation}
cannot hold for all sufficiently large \(\rho\).
\end{lemma}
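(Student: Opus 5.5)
Assume, to the contrary, that \eqref{eq:oscillatory_laurent_exponential_smallness} holds for all \(\rho\ge\rho_0\). The plan is to reduce to a quasi-periodic function with polynomial weights and derive a contradiction from almost periodicity.

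\emph{Normalization.} Write each \(r_k(\rho)=\sum_{m\le A_k}a_{k,m}\rho^m\), a finite sum with \(a_{k,A_k}\ne0\), and set \(A_*:=\max_kA_k\). Let \(K_*=\{k:A_k=A_*\}\), which is nonempty. Dividing by \(\rho^{A_*}\) gives
\begin{equation*}
S(\rho):=\sum_{k\in K_*}a_{k,A_*}e^{-i\rho\beta_k}
=\rho^{-A_*}\sum_{k=1}^M e^{-i\rho\beta_k}r_k(\rho)-\sum_{k=1}^M e^{-i\rho\beta_k}\bigl(\rho^{-A_*}r_k(\rho)-a_{k,A_*}\mathbf 1_{k\in K_*}\bigr).
\end{equation*}
The first term is \(O(e^{-\delta\rho}\rho^{A-A_*})\to0\) by hypothesis. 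The second term is \(O(\rho^{-1})\), since after division every remaining power of \(\rho\) is negative. Hence \(S(\rho)\to0\) as \(\rho\to\infty\).

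\emph{Mean-value argument.} \(S\) is a trigonometric polynomial with pairwise distinct real frequencies and nonzero coefficients. Fix \(k_0\in K_*\). Multiply by \(e^{i\rho\beta_{k_0}}\) and average over \([R,2R]\):
\begin{equation*}
\frac1R\int_R^{2R}S(\rho)e^{i\rho\beta_{k_0}}\,d\rho
=a_{k_0,A_*}+\sum_{k\in K_*\setminus\{k_0\}}a_{k,A_*}\frac{e^{-iR(\beta_k-\beta_{k_0})}\bigl(e^{-iR(\beta_k-\beta_{k_0})}-1\bigr)}{-iR(\beta_k-\beta_{k_0})}.
\end{equation*}
The sum tends to \(a_{k_0,A_*}\) as \(R\to\infty\), because the cross terms are \(O(1/R)\) by distinctness of the \(\beta_k\). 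On the other hand, the left side tends to \(0\) because \(S(\rho)\to0\). This gives \(a_{k_0,A_*}=0\), contradicting the fact that \(A_{k_0}\) is the exact top degree of \(r_{k_0}\), which is nonzero.

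\emph{Main point.} The only subtle step is handling different leading degrees among the \(r_k\). Grouping by the maximal degree \(A_*\) resolves this: all lower-order contributions decay like \(\rho^{-1}\). The exponential factor \(e^{-\delta\rho}\) is actually much stronger than needed; the argument only uses that the sum is \(o(\rho^{A_*})\). Note also that negative powers in the Laurent polynomials cause no difficulty, since \(\rho\ge\rho_0>0\).
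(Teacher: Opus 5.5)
Your proof is correct and follows essentially the paper's route: you normalize by the top power of \(\rho\) so that the leading trigonometric polynomial \(S(\rho)=\sum_{k\in K_*}a_{k,A_*}e^{-i\rho\beta_k}\) tends to zero, and then contradict this with a mean-value computation. The only difference is cosmetic: you extract one Bohr coefficient via \(\frac1R\int_R^{2R}S(\rho)e^{i\rho\beta_{k_0}}\,d\rho\to a_{k_0,A_*}\), whereas the paper uses the mean-square identity \(\frac1T\int_0^T|S(\rho)|^2\,d\rho\to\sum_{k\in K_*}|a_{k,A_*}|^2>0\).
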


\begin{proof}
Let \(d\) be the largest power of \(\rho\) occurring in any \(r_k\), and let
\(c_k\) be the coefficient of \(\rho^d\) in \(r_k\), with \(c_k=0\) if that
power is absent. At least one \(c_k\) is nonzero. Dividing
\eqref{eq:oscillatory_laurent_exponential_smallness} by \(\rho^d\) gives
\begin{equation*}
\sum_{k=1}^M c_k e^{-i\rho\beta_k}=o(1), \quad\text{as }\rho\to+\infty,
\end{equation*}
since \(r_k(\rho)/\rho^d=c_k+o(1)\). On the other hand, the frequencies
\(\beta_k\) are pairwise distinct, so
\begin{equation*}
\lim_{T\to+\infty}\frac1T\int_0^T\left|\sum_{k=1}^M c_k e^{-i\rho\beta_k}
\right|^2\,d\rho=\sum_{k=1}^M |c_k|^2>0.
\end{equation*}
But the integrand tends to \(0\) and is bounded, so its Ces\`aro mean must also
tend to \(0\). This contradiction proves the claim.
\end{proof}

%%%%%%%%%%%%%%%%%%%%%%%%%%%%%%%%%%%%%%%%%%%%

\subsection{Proof of the Brion--Barvinok injectivity theorem}

\begin{proof}[Proof of Theorem~\ref{thm:main_brion_barvinok_uniqueness}]
Assume that the first equation of \eqref{eq:BB_minus_identity_main} holds. By
\eqref{eq:BB_reduced_minus}, the reduced identity is
\begin{equation*}
\sum_{j=1}^N\frac{e^{-ir_j\cdot\zeta_\theta(\eta)}
-e^{-ir_{j-1}\cdot\zeta_\theta(\eta)}}{\lambda_j(\eta)}H_j(-i\eta)=0.
\end{equation*}
By the choice of \(\mathcal C_U\), the set \(U\cap\mathcal C_U\) is a nonempty
open subset of \(\mathcal C_U\). Since the left-hand side is real analytic on
\(\theta^\perp\setminus\{0\}\) and vanishes on \(U\cap\mathcal C_U\), analytic
continuation on the connected set \(\mathcal C_U\) shows that it vanishes on all
of \(\mathcal C_U\).

For each \(j\), Lemma~\ref{lem:weighted_BB_vertex_expansion} gives
\begin{equation*}
H_j(z)=\sum_{v\in V(D)}e^{z\cdot v}R_{j,v}(z),\quad R_{j,v}:=R_{D,p_j,v}.
\end{equation*}
Along a ray \(\eta=\rho\omega\), set $a_j(\omega):=1-i\gamma_j\cdot\omega$.
Since \(\operatorname{Re}a_j(\omega)=1\), we have \(a_j(\omega)\ne0\).
Substituting the vertex expansion into the reduced identity gives, away from
the polar hyperplanes,
\begin{equation}\label{eq:BB_vertex_identity_minus}
\sum_{j=1}^N\sum_{v\in V(D)}\frac{e^{\rho U_j(v;\omega)}
-e^{\rho L_j(v;\omega)}}{\rho a_j(\omega)}R_{j,v}(-i\rho\omega)=0.
\end{equation}
This identity is well-defined along every ray for which the finitely many
restricted denominators are not identically zero; after excluding their
finitely many positive zeros in \(\rho\), it holds for all sufficiently large
\(\rho\). It follows from the entire reduced identity, the right-hand side
being only a meromorphic representation of it.

Let $J:=\{j:p_j\not\equiv 0\}$. We prove \(J=\varnothing\). 

Suppose \(J\ne\varnothing\), and set $j_*:=\max J$. The upper endpoint of this segment has height \(t_{j_*}\), while every other nonzero endpoint contribution has height at most \(t_{j_*-1}\). Thus the exponential gap is
\begin{equation*}
\delta:=t_{j_*}-t_{j_*-1}=h_{j_*}>0.
\end{equation*}
Apply Lemma~\ref{lem:finite_family_generic_ray} to the family
\begin{equation*}
(0,1,D,p_j,v),\quad j\in J,\quad v\in V(D),
\end{equation*}
with \(\mathcal O=\mathcal O_U\). We obtain \(\omega\in\mathcal O_U\) such that all relevant vertex coefficients are nonzero finite Laurent polynomials along
$\eta=\rho\omega$, $z=-i\rho\omega$, and such that the numbers \(\omega\cdot v\), \(v\in V(D)\), are pairwise distinct.

The upper endpoint contribution of the \(j_*\)-th segment in \eqref{eq:BB_vertex_identity_minus} is
\begin{equation*}
e^{\rho t_{j_*}}
\sum_{v\in V(D)}
e^{-i\rho\omega\cdot(b_{j_*}+v)}
\frac{R_{j_*,v}(-i\rho\omega)}
{\rho a_{j_*}(\omega)}.
\end{equation*}
Each Laurent polynomial here is nonzero, and the frequencies
\(\omega\cdot(b_{j_*}+v)\) are pairwise distinct. All remaining terms in
\eqref{eq:BB_vertex_identity_minus} are bounded by $C e^{\rho(t_{j_*}-\delta)}\rho^A$
for suitable \(A,C>0\). Dividing the identity by \(e^{\rho t_{j_*}}\), we contradict
Lemma~\ref{lem:dominant_oscillatory_laurent_separation}. Hence \(J=\varnothing\), and therefore $p_1=\cdots=p_N=0$. This proves \(G=0\) a.e. in \(\Omega_{\mathcal B}\) for the negative phase.

For the sign-reversed case, assume that the second equation of \eqref{eq:BB_minus_identity_main} holds. The reduced identity is \eqref{eq:BB_reduced_plus}. Away from the polar hyperplanes, the vertex expansion gives
\begin{equation*}
\sum_{j=1}^N\sum_{v\in V(D)}\frac{e^{-\rho L_j(v;\omega)}
-e^{-\rho U_j(v;\omega)}}{\rho a_j(\omega)}R_{j,v}(i\rho\omega)=0.
\end{equation*}
Suppose \(J\ne\varnothing\), choose \(j_*:=\min J\). The lower endpoint of this segment has exponent \(-t_{j_*-1}\), while every other nonzero endpoint contribution has exponent at most \(-t_{j_*}\); the gap is again \(h_{j_*}>0\). Apply Lemma~\ref{lem:finite_family_generic_ray} to
\begin{equation*}
(0,-1,D,p_j,v),\quad j\in J,\quad v\in V(D),
\end{equation*}
and choose \(\omega\in\mathcal O_U\) with pairwise distinct vertex projections. The dominant lower-endpoint cluster is
\begin{equation*}
e^{-\rho t_{j_*-1}}\sum_{v\in V(D)}e^{i\rho\omega\cdot(b_{j_*-1}+v)}
\frac{R_{j_*,v}(i\rho\omega)}{\rho a_{j_*}(\omega)}.
\end{equation*}
After division by \(e^{-\rho t_{j_*-1}}\), all other terms are exponentially smaller.
Lemma~\ref{lem:dominant_oscillatory_laurent_separation}, now with frequencies
\(-\omega\cdot(b_{j_*-1}+v)\), gives a contradiction. Hence \(J=\varnothing\), and all \(p_j\) vanish identically also for the positive phase.
\end{proof}

%%%%%%%%%%%%%%%%%%%%%%%%%%%%%%%%%%%%%%%%%%%%%%%%%%%%%%%%%%%%	

\subsection{Geometric finite-dimensional stability}
\label{subsec:polygonal_stability}

We derive a conditional finite-dimensional estimate depending on the sweep geometry, the polynomial degree, and the selected Fourier--Laplace samples.

For the \(j\)-th segment, set
\begin{equation*}
v_j:=\frac{\theta+\gamma_j}{\sqrt{1+|\gamma_j|^2}},
\quad\beta_j:=\angle(v_j,\theta)\in[0,\pi/2),\quad
\ell_j:=h_j\sqrt{1+|\gamma_j|^2}.
\end{equation*}
Thus \(\ell_j\) is the Euclidean length of the segment, \(\beta_j\) is its inclination from the cylinder axis, and
\begin{equation*}
h_j=\ell_j\cos\beta_j,\quad
\gamma_j=\widehat\gamma_j\tan\beta_j,\quad
\widehat\gamma_j:=\frac{\gamma_j}{|\gamma_j|},\quad \gamma_j\ne 0.
\end{equation*}
Define the cumulative axial and lateral displacements
\begin{equation*}
T_j:=\sum_{s=1}^j\ell_s\cos\beta_s=t_j,
\quad B_j:=\sum_{s=1}^j\ell_s \widehat\gamma_s\sin\beta_s=b_j,
\quad T_0=B_0=0.
\end{equation*}
For the turning angle \(\alpha_j:=\angle(v_j,v_{j+1})\) at the \(j\)-th internal vertex,
\begin{equation}\label{eq:turning_angle_formula}
\cos\alpha_j=\cos\beta_j\cos\beta_{j+1}
+\widehat\gamma_j\cdot\widehat\gamma_{j+1}\sin\beta_j\sin\beta_{j+1}.
\end{equation}
Thus the vectors \(B_j\) encode the turning-angle information.

Fix \(d\in\mathbb N_0\), and set
\begin{equation*}
\mathcal I_d := \{\alpha\in\mathbb N_0^{n-1}:|\alpha|\le d\},\quad
M_d:=\binom{n-1+d}{d},\quad q_d:=NM_d=N\binom{n-1+d}{d}.
\end{equation*}
Choose \(y_D\in D\), set
\begin{equation*}
R_D:=\sup_{y\in D}|y-y_D|>0,\quad
m_\alpha^D(y):=\left(\frac{y-y_D}{R_D}\right)^\alpha,
\end{equation*}
and write
\begin{equation*}
p_j(y)=\sum_{\alpha\in\mathcal I_d} c_{j,\alpha}m_\alpha^D(y).
\end{equation*}
This scaled monomial basis satisfies
\begin{equation}\label{eq:scaled_monomial_bound}
|m_\alpha^D(y)|\le 1,\quad y\in D,\quad \alpha\in\mathcal I_d.
\end{equation}

Remove the known translation factor and define
\begin{equation*}
(\mathcal T_{\mathcal B}G)(\eta):=e^{ix_0\cdot\zeta_\theta(\eta)}
\int_{\Omega_{\mathcal B}} G(x)e^{-ix\cdot\zeta_\theta(\eta)}\,dx.
\end{equation*}
For \(\eta=\rho\omega\), where \(\rho>0\) and
\(\omega\in\theta^\perp\cap\mathbb S^{n-1}\), put
\begin{equation*}
F_\alpha^D(\rho,\omega):=\int_D m_\alpha^D(y)e^{-i\rho\omega\cdot y}\,dy.
\end{equation*}
Then $|F_\alpha^D(\rho,\omega)|\le |D|$. The exact contribution of \(c_{j,\alpha}\) to
\(\mathcal T_{\mathcal B}G\) is
\begin{equation*}
A_{j,\alpha}^{\mathrm{geo}}(\rho,\omega):=\frac{e^{\rho T_j-i\rho\omega\cdot B_j}
-e^{\rho T_{j-1}-i\rho\omega\cdot B_{j-1}}}{\rho\left(1-i\widehat\gamma_j\cdot\omega\tan\beta_j
\right)}F_\alpha^D(\rho,\omega),
\end{equation*}
and
\begin{equation}\label{eq:stability_column_length_angle_form}
(\mathcal T_{\mathcal B}G)(\rho\omega)
=\sum_{j=1}^N\sum_{\alpha\in\mathcal I_d}
A_{j,\alpha}^{\mathrm{geo}}(\rho,\omega)c_{j,\alpha}.
\end{equation}
Every term above is determined by the base \(D\), the lengths
\(\ell_j\), the inclinations \(\beta_j\), the turning directions
\(\widehat\gamma_j\), and the sampling pair \((\rho,\omega)\).

Choose \(q_d\) sampling pairs
\begin{equation}\label{eq:geometric_sampling_set}
\Xi:=\{(\rho_k,\omega_k):1\le k\le q_d\},\quad \rho_k\omega_k\in U,
\end{equation}
and define the geometric sampling matrix
\begin{equation*}
\mathbf A_{\mathcal B,d,\Xi}^{\mathrm{geo}}
:=\bigl(A_{j,\alpha}^{\mathrm{geo}}(\rho_k,\omega_k)
\bigr)_{1\le k\le q_d,\,1\le j\le N,\,\alpha\in\mathcal I_d}.
\end{equation*}
The geometric non-cancellation determinant is
\begin{equation*}
\Delta_{\mathcal B,d,\Xi}^{\mathrm{geo}}
:=\left|\det \mathbf A_{\mathcal B,d,\Xi}^{\mathrm{geo}}\right|.
\end{equation*}
It is explicit in the quantities in
\eqref{eq:stability_column_length_angle_form}; in particular, the turning
angles enter through the cumulative lateral vectors \(B_j\).

%%%

\begin{theorem}\label{thm:polygonal_finite_dimensional_stability}
Fix a nonempty open \(U\Subset\theta^\perp\setminus\{0\}\). Let
\(p_j\in\Pi_d(D)\), \(j=1,\cdots,N\). There exist \(q_d\) sampling pairs
\(\Xi\) as in \eqref{eq:geometric_sampling_set} such that
\(\Delta_{\mathcal B,d,\Xi}^{\mathrm{geo}}>0\). For any such \(\Xi\), set
\begin{equation*}
\rho^*:=\max_{1\le k\le q_d}\rho_k,\quad 
L_\theta:=T_N=\sum_{j=1}^N\ell_j\cos\beta_j,\quad h^*:=\max_j(\ell_j\cos\beta_j),
\quad B_{\mathcal B,\Xi}^{\mathrm{geo}}
:=h^*|D|e^{\rho^*L_\theta}.
\end{equation*}
Then
\begin{equation}\label{eq:sampled_polygonal_stability}
\|G\|_{L^2(\Omega_{\mathcal B})}\le C_{\mathcal B,d,\Xi}^{\mathrm{geo}}
\left(\sum_{k=1}^{q_d}|(\mathcal T_{\mathcal B}G)(\rho_k\omega_k)|^2
\right)^{1/2},
\end{equation}
where
\begin{equation}\label{eq:explicit_determinant_stability_bound}
C_{\mathcal B,d,\Xi}^{\mathrm{geo}}
:=\sqrt{h^*|D|M_d}\,\frac{q_d^{(q_d+1)/2}\left(B_{\mathcal B,\Xi}^{\mathrm{geo}}\right)^{q_d-1}}{\Delta_{\mathcal B,d,\Xi}^{\mathrm{geo}}}.
\end{equation}
In particular,
\begin{equation*}
\|G\|_{L^2(\Omega_{\mathcal B})}\le \sqrt{q_d}\,
C_{\mathcal B,d,\Xi}^{\mathrm{geo}}\|\mathcal T_{\mathcal B}G\|_{L^\infty(U)}.
\end{equation*}
The same bounds hold for differences of two degree-\(d\) densities, giving a
deterministic noise-amplification bound.
\end{theorem}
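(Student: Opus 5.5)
The proof splits into two parts: existence of a nondegenerate sampling set $\Xi$, and a quantitative inversion of the finite linear system \eqref{eq:stability_column_length_angle_form}.

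\emph{Existence of $\Xi$.} The columns of $\mathbf A^{\mathrm{geo}}_{\mathcal B,d,\Xi}$ are indexed by $(j,\alpha)$, $1\le j\le N$, $\alpha\in\mathcal I_d$. Each column is the restriction to $U$ of the real-analytic function $\eta\mapsto A^{\mathrm{geo}}_{j,\alpha}(|\eta|,\eta/|\eta|)$. I would first show that these $q_d$ functions are linearly independent on $U$. Suppose $\sum_{j,\alpha}c_{j,\alpha}A^{\mathrm{geo}}_{j,\alpha}\equiv0$ on $U$. By \eqref{eq:stability_column_length_angle_form}, the density $G=\sum_j p_j(Y)\chi_{P_j}$ with $p_j=\sum_\alpha c_{j,\alpha}m^D_\alpha$ then satisfies the negative-phase identity in \eqref{eq:BB_minus_identity_main} on $U$. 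Theorem~\ref{thm:main_brion_barvinok_uniqueness} forces $p_j\equiv0$, and since the $m^D_\alpha$ form a basis of $\Pi_d$, all $c_{j,\alpha}=0$.

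Next I would invoke the standard fact that $q_d$ linearly independent functions $f_1,\dots,f_{q}$ on a set $U$ admit points $\eta_1,\dots,\eta_q\in U$ with $\det(f_i(\eta_k))\neq0$. This follows by induction: choose points greedily so that each successive leading minor is nonzero, using that the cofactor expansion in the last row is a nontrivial combination of the remaining functions, which cannot vanish identically on $U$. Writing $\eta_k=\rho_k\omega_k$ gives $\Delta^{\mathrm{geo}}>0$.

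\emph{Stability estimate.} Write $\mathbf c=(c_{j,\alpha})$ and $\mathbf b=(\mathcal T_{\mathcal B}G(\rho_k\omega_k))_k$, so that $\mathbf A\mathbf c=\mathbf b$. Then $\mathbf c=\mathbf A^{-1}\mathbf b=\operatorname{adj}(\mathbf A)\mathbf b/\det\mathbf A$.

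To bound the entries, use $|F^D_\alpha|\le|D|$ together with $|e^{\rho T_j-i\cdots}-e^{\rho T_{j-1}-i\cdots}|/|\rho(1-i\cdots)|$. The cleaner route is to write this quotient as the integral $\int_{0}^{h_j}e^{\tau\lambda_j}\,d\tau$ from the reduction, which is bounded by $h_j e^{\rho T_j}\le h^*e^{\rho^*L_\theta}$. Hence every entry of $\mathbf A$ is bounded by $B^{\mathrm{geo}}_{\mathcal B,\Xi}$.

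Each cofactor is a $(q_d-1)\times(q_d-1)$ determinant. By Hadamard's inequality (or crudely, $(q_d-1)!\le q_d^{q_d-1}$ times the product bound), it is bounded by $q_d^{(q_d-1)/2}(B^{\mathrm{geo}})^{q_d-1}$. The Frobenius norm of $\operatorname{adj}\mathbf A$ is then at most $q_d\cdot q_d^{(q_d-1)/2}(B^{\mathrm{geo}})^{q_d-1}=q_d^{(q_d+1)/2}(B^{\mathrm{geo}})^{q_d-1}$, which gives $|\mathbf c|\le q_d^{(q_d+1)/2}(B^{\mathrm{geo}})^{q_d-1}|\mathbf b|/\Delta^{\mathrm{geo}}$.

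Finally, I would pass from coefficients to the $L^2$ norm. On $P_j$, \eqref{eq:scaled_monomial_bound} and Cauchy--Schwarz give $|p_j(Y)|\le\sqrt{M_d}\,|c_{j,\cdot}|$. Since the Jacobian of $\Phi$ is $1$, $|P_j|=h_j|D|\le h^*|D|$. Summing over $j$ yields $\|G\|_{L^2}^2\le h^*|D|M_d|\mathbf c|^2$, which is \eqref{eq:sampled_polygonal_stability} with the constant \eqref{eq:explicit_determinant_stability_bound}. The $L^\infty(U)$ version follows from $|\mathbf b|\le\sqrt{q_d}\,\|\mathcal T_{\mathcal B}G\|_{L^\infty(U)}$. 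Because the map is linear, the same bounds apply to differences of two densities.

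The main obstacle is the existence step: transferring injectivity of Theorem~\ref{thm:main_brion_barvinok_uniqueness} into linear independence of the column functions on $U$ itself, rather than only on a ray. The rest is bookkeeping of the constants, where I would keep the exponent of $q_d$ exactly matched to the stated form.
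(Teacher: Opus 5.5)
Your proposal is correct and follows essentially the same route as the paper. It derives linear independence of the columns $A^{\mathrm{geo}}_{j,\alpha}$ on $U$ from Theorem~\ref{thm:main_brion_barvinok_uniqueness}, and then chooses an invertible evaluation matrix; your greedy-minor argument is equivalent to the paper's observation that the evaluation rows cannot lie in a proper subspace. It then bounds each entry by $B^{\mathrm{geo}}_{\mathcal B,\Xi}$, applies Hadamard to the cofactors with $\|\mathbf A^{-1}\|_2\le\|\operatorname{adj}\mathbf A\|_{\mathrm F}/|\det\mathbf A|$, and finishes with the coefficient-to-$L^2$ bound.

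The only slip is your parenthetical alternative. The crude Leibniz bound $(q_d-1)!\,(B^{\mathrm{geo}}_{\mathcal B,\Xi})^{q_d-1}$ with $(q_d-1)!\le q_d^{q_d-1}$ gives the exponent $q_d-1$ instead of $(q_d-1)/2$, so Hadamard's inequality is genuinely needed for the stated constant.
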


\begin{proof}
By Theorem~\ref{thm:main_brion_barvinok_uniqueness}, the functions
\begin{equation*}
A_{j,\alpha}^{\mathrm{geo}}(\rho,\omega),\quad
1\le j\le N,\quad \alpha\in\mathcal I_d,
\end{equation*}
are linearly independent on
\(X=\{(\rho,\omega):\rho\omega\in U\}\). Hence we may choose
\(x_k=(\rho_k,\omega_k)\in X\), \(1\le k\le q_d\), such that the evaluation
matrix \(\mathbf A_{\mathcal B,d,\Xi}^{\mathrm{geo}}\) is invertible: otherwise
all evaluation rows would lie in a proper subspace, giving a nontrivial linear
combination vanishing on \(X\). Thus
\(\Delta_{\mathcal B,d,\Xi}^{\mathrm{geo}}>0\).

Let \(\mathbf c=(c_{j,\alpha})\). By \eqref{eq:scaled_monomial_bound},
\begin{equation*}
|p_j(y)|\le \sqrt{M_d} 
\left(\sum_{\alpha\in\mathcal I_d}|c_{j,\alpha}|^2\right)^{1/2}.
\end{equation*}
Since the sweep map has unit Jacobian,
\begin{equation*}
\|G\|_{L^2(\Omega_{\mathcal B})}^2
=\sum_{j=1}^N h_j\int_D |p_j(y)|^2\,dy\le h^*|D|M_d
\sum_{j=1}^N\sum_{\alpha\in\mathcal I_d}|c_{j,\alpha}|^2.
\end{equation*}
Therefore
\begin{equation}\label{eq:geometric_coefficient_to_density_bound}
\|G\|_{L^2(\Omega_{\mathcal B})}
\le \sqrt{h^*|D|M_d}\,\|\mathbf c\|_{\ell^2}.
\end{equation}

For an entry of \(\mathbf A_{\mathcal B,d,\Xi}^{\mathrm{geo}}\), the
longitudinal factor in \eqref{eq:stability_column_length_angle_form} equals
\begin{equation*}
e^{\rho T_{j-1}-i\rho\omega\cdot B_{j-1}}\int_0^{\ell_j\cos\beta_j}
e^{\tau\rho(1-i\tan\beta_j\widehat\gamma_j\cdot\omega)}\,d\tau.
\end{equation*}
Hence
\begin{equation*}
|A_{j,\alpha}^{\mathrm{geo}}(\rho_k,\omega_k)|
\le \ell_j\cos\beta_j\,|D|e^{\rho_kT_j}\le B_{\mathcal B,\Xi}^{\mathrm{geo}}.
\end{equation*}
By Hadamard's inequality, every \((q_d-1)\times(q_d-1)\) cofactor of
\(\mathbf A_{\mathcal B,d,\Xi}^{\mathrm{geo}}\) is bounded by
$q_d^{(q_d-1)/2}\bigl(B_{\mathcal B,\Xi}^{\mathrm{geo}}\bigr)^{q_d-1}$. Thus
\begin{equation*}
\|\mathbf A^{-1}\|_2 \le \|\mathbf A^{-1}\|_{\mathrm F}
=\frac{\|\operatorname{adj}\mathbf A\|_{\mathrm F}}{|\det\mathbf A|}
\le\frac{q_d^{(q_d+1)/2}\bigl(B_{\mathcal B,\Xi}^{\mathrm{geo}}\bigr)^{q_d-1}
}{\Delta_{\mathcal B,d,\Xi}^{\mathrm{geo}}}.
\end{equation*}
Applying this inverse bound to the sampled system gives
\begin{equation}\label{eq:cramer_geometric_coefficient_bound}
\|\mathbf c\|_{\ell^2} \le \frac{q_d^{(q_d+1)/2}
\bigl(B_{\mathcal B,\Xi}^{\mathrm{geo}}\bigr)^{q_d-1}
}{\Delta_{\mathcal B,d,\Xi}^{\mathrm{geo}}}
\left(\sum_{k=1}^{q_d}|(\mathcal T_{\mathcal B}G)(\rho_k\omega_k)|^2\right)^{1/2}.
\end{equation}
Combining \eqref{eq:geometric_coefficient_to_density_bound} and
\eqref{eq:cramer_geometric_coefficient_bound} proves
\eqref{eq:sampled_polygonal_stability}--\eqref{eq:explicit_determinant_stability_bound}.
The \(L^\infty(U)\) estimate follows by bounding the Euclidean norm of the
\(q_d\) samples by \(\sqrt{q_d}\) times their maximum.
\end{proof}

After removing the initial endpoint factor, the \(j\)-th segment contributes
\begin{equation*}
S_j(\rho,\omega):=\frac{e^{\rho\ell_j\cos\beta_j(1-i\tan\beta_j\widehat\gamma_j\cdot\omega)}-1}{\rho(1-i\tan\beta_j\widehat\gamma_j\cdot\omega)}.
\end{equation*}
It obeys the length-angle bound
\begin{equation*}
\ell_j\cos^2\beta_j
\le \frac{\cos\beta_j}{\rho}\bigl(e^{\rho\ell_j\cos\beta_j}-1\bigr)
\le |S_j(\rho,\omega)|
\le \ell_j\cos\beta_j\,e^{\rho\ell_j\cos\beta_j}.
\end{equation*}
Indeed,
\begin{equation*}
|1-i\widehat\gamma_j\cdot\omega\tan\beta_j|\le \frac1{\cos\beta_j},
\quad |e^{\rho\ell_j\cos\beta_j(1-i\widehat\gamma_j\cdot\omega\tan\beta_j)}-1|
\ge e^{\rho\ell_j\cos\beta_j}-1,
\end{equation*}
which gives the middle inequality; \(e^s-1\ge s\) gives the first; and
\begin{equation*}
S_j(\rho,\omega)=\int_0^{\ell_j\cos\beta_j}
e^{\tau\rho(1-i\tan\beta_j\widehat\gamma_j\cdot\omega)}\,d\tau
\end{equation*}
gives the upper bound. Hence a short segment or \(\beta_j\approx\pi/2\) causes
a visible loss. The remaining inter-segment interaction is measured by
\(\Delta_{\mathcal B,d,\Xi}^{\mathrm{geo}}\).

%%%

\begin{remark}\label{rem:polygonal_stability_degeneracy}
Suppose $0<\ell_*\le\ell_j\le\ell^*$, $0\le\beta_j\le\beta^*<\frac{\pi}{2}$,
and fix sampling pairs with $\Delta_{\mathcal B,d,\Xi}^{\mathrm{geo}}\ge\Delta_*>0$.
Then
\begin{equation*}
\ell_*\cos^2\beta^*\le |S_j(\rho,\omega)|,\quad
h^*\le\ell^*,\quad L_\theta\le N\ell^*,
\end{equation*}
and hence
\begin{equation*}
C_{\mathcal B,d,\Xi}^{\mathrm{geo}} \le \sqrt{\ell^*|D|M_d}\,
\frac{q_d^{(q_d+1)/2}\bigl(\ell^*|D|e^{\rho^*N\ell^*}\bigr)^{q_d-1}}{\Delta_*}.
\end{equation*}
The turning angles enter \(\Delta_*\) through
\eqref{eq:turning_angle_formula} and the vectors \(B_j\). A lower bound on
the turning angles alone is not enough: the base moments, sampling directions,
and oscillatory phase cancellations must also remain nondegenerate.

Finally,
\begin{equation*}
M_d=\binom{n-1+d}{d},\quad
q_d=N\binom{n-1+d}{d}
\sim \frac{N}{(n-1)!}d^{\,n-1}.
\end{equation*}
Thus \eqref{eq:explicit_determinant_stability_bound} shows the rapid
deterioration as the polynomial degree increases.
\end{remark}

%%%%%%%%%%%%%%%%%%%%%%%%%%%%%%%%%%%%%%%%%%%%%%%%%%%%%%%%%%%%

\subsection{Approximation of a continuous sweep by polygonal models}
\label{subsec:curved_sweep_approximation}

We now use the finite-dimensional estimate as an approximation result. It does
not extend exact injectivity to all continuous densities; it gives an error
bound for a finite polygonal and polynomial model.

Let \(b\in C([0,L];\theta^\perp)\) with \(b(0)=0\) (which can be absorbed
into \(x_0\)). Define
\begin{equation*}
\Phi_b(y,t)=x_0+y+b(t)+t\theta, \quad (y,t)\in\operatorname{int}D\times(0,L),
\end{equation*}
and
\begin{equation*}
\Omega_b=\Phi_b(\operatorname{int}D\times(0,L)).
\end{equation*}
Since \((\Phi_b(y,t)-x_0)\cdot\theta=t\), \(\Phi_b\) is one-to-one. In
material coordinates,
\begin{equation*}
G_b(\Phi_b(y,t))=F(y,t),\quad F\in C(D\times[0,L]).
\end{equation*}
All approximation errors below are measured in these coordinates.

Let
\begin{equation*}
0=t_0<t_1<\cdots<t_N=L, \quad h_N=\max_{1\le j\le N}(t_j-t_{j-1}),
\end{equation*}
and let \(b_N\) be the piecewise linear interpolant of \(b\) at the nodes
\(t_j\). Put
\begin{equation*}
\delta_N=\|b-b_N\|_{L^\infty(0,L)}, \quad
\omega_b(r)=\sup_{\substack{t,s\in[0,L]\\ |t-s|\le r}}|b(t)-b(s)|.
\end{equation*}

%%%

The next lemma records the geometric approximation error.
\begin{lemma}
\label{lem:curve_interpolation}
The interpolant satisfies $\delta_N\le \omega_b(h_N)$, so \(\delta_N\to0\) as \(h_N\to0\). If \(b\in C^2([0,L];\theta^\perp)\), then
\begin{equation*}
\delta_N\le \frac18\|b''\|_{L^\infty(0,L)}h_N^2.
\end{equation*}
\end{lemma}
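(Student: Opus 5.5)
The plan is to work one subinterval at a time, since \(b_N\) is defined piecewise. Fix \(t\in[t_{j-1},t_j]\) and write \(t=(1-s)t_{j-1}+st_j\) with \(s\in[0,1]\). Then
\begin{equation*}
b_N(t)=(1-s)b(t_{j-1})+sb(t_j),
\qquad
b(t)-b_N(t)=(1-s)\bigl(b(t)-b(t_{j-1})\bigr)+s\bigl(b(t)-b(t_j)\bigr).
\end{equation*}
Both differences on the right are between points at distance at most \(t_j-t_{j-1}\le h_N\). So each has norm at most \(\omega_b(h_N)\), and the convex combination gives \(|b(t)-b_N(t)|\le\omega_b(h_N)\). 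Taking the supremum over \(t\) and over \(j\) yields \(\delta_N\le\omega_b(h_N)\).

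Since \(b\) is continuous on the compact interval \([0,L]\), it is uniformly continuous. Hence \(\omega_b(r)\to0\) as \(r\to0\), and so \(\delta_N\to0\) as \(h_N\to0\).

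For the \(C^2\) bound, I will reduce to scalars. Because \(\theta^\perp\) is a Euclidean space, \(|e(t)|=\sup_{|\nu|=1}\nu\cdot e(t)\) for \(e:=b-b_N\). So it suffices to bound \(\phi(t):=\nu\cdot e(t)\) for a fixed unit vector \(\nu\). On \([t_{j-1},t_j]\), \(\phi\) is \(C^2\), vanishes at both endpoints, and satisfies \(\phi''=\nu\cdot b''\). In particular \(|\phi''|\le\|b''\|_{L^\infty}\).

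I then apply the standard Lagrange interpolation remainder in its Rolle form. For each interior \(t\), the auxiliary function
\begin{equation*}
g(x)=\phi(x)-K(x-t_{j-1})(x-t_j),
\end{equation*}
with \(K\) chosen so that \(g(t)=0\), has three zeros. Two applications of Rolle's theorem then give
\begin{equation*}
\phi(t)=\tfrac12\phi''(\xi)(t-t_{j-1})(t-t_j)
\end{equation*}
for some \(\xi\) in the subinterval. Since
\begin{equation*}
|(t-t_{j-1})(t-t_j)|\le\tfrac14(t_j-t_{j-1})^2\le\tfrac14h_N^2,
\end{equation*}
we obtain \(|\phi(t)|\le\tfrac18\|b''\|_{L^\infty}h_N^2\). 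Taking the supremum over \(\nu\), \(t\) and \(j\) gives the stated bound.

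The only point needing care is that the Rolle argument must not be applied directly to a vector-valued function, because the mean value point \(\xi\) would then depend on the component. The projection onto a fixed unit vector \(\nu\) avoids this issue. The rest of the argument is routine.
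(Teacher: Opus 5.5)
Your proof is correct and follows the paper's: the first part is the same convex-combination argument with the modulus of continuity $\omega_b$. For the $C^2$ bound the paper simply cites the standard linear interpolation error estimate on each $[t_{j-1},t_j]$; your projection onto a fixed unit vector $\nu$ followed by the Rolle argument proves that estimate in full and handles the vector-valued case correctly.
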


\begin{proof}
Fix \(t\in[t_{j-1},t_j]\) and write $t=(1-\lambda)t_{j-1}+\lambda t_j$, $0\le\lambda\le1$. Since \(b_N(t)=(1-\lambda)b(t_{j-1})+\lambda b(t_j)\),
\begin{equation*}
|b(t)-b_N(t)|\le (1-\lambda)|b(t)-b(t_{j-1})|+\lambda|b(t)-b(t_j)|
\le\omega_b(h_N).
\end{equation*}
Hence \(\delta_N\le\omega_b(h_N)\), and \(\delta_N\to0\) as \(h_N\to0\)
because \(b\) is uniformly continuous on \([0,L]\).

If \(b\in C^2\), the standard linear interpolation error bound on
\([t_{j-1},t_j]\) gives
\begin{equation*}
|b(t)-b_N(t)|\le\frac{(t_j-t_{j-1})^2}{8}\|b''\|_{L^\infty(t_{j-1},t_j)}.
\end{equation*}
Taking the maximum over all intervals yields
\begin{equation*}
\delta_N\le \frac18\|b''\|_{L^\infty(0,L)}h_N^2.
\end{equation*}
\end{proof}

The curve \(b_N\) produces the polygonal cylinder data used above.. We next
approximate the density. Define
\begin{equation*}
e_d(F):=\sup_{t\in[0,L]}\inf_{p\in\Pi_d(D)}\|F(\cdot,t)-p\|_{C(D)}, \quad
\omega_{F,t}(r):=\sup_{\substack{y\in D,\ t,s\in[0,L]\\ |t-s|\le r}}|F(y,t)-F(y,s)|.
\end{equation*}

%%%

\begin{lemma}\label{lem:piecewise_polynomial_approximation}
There exist \(p_j\in\Pi_d(D)\) such that $P_{N,d}(y,t)=p_j(y)$, $t_{j-1}<t\le t_j$,
satisfies
\begin{equation*}
\|F-P_{N,d}\|_{L^\infty(D\times(0,L))}\le E_{N,d}(F),
\quad E_{N,d}(F)=\omega_{F,t}(h_N)+e_d(F).
\end{equation*}
Moreover,
\begin{equation*}
E_{N,d}(F)\to 0, \quad\text{as } h_N\to0,\ d\to\infty.
\end{equation*}
\end{lemma}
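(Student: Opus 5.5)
The plan is to build \(P_{N,d}\) in two stages. First freeze the longitudinal variable at the right endpoint of each cell. Then replace each frozen slice by a near-best polynomial.

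For each \(j\), the slice \(F(\cdot,t_j)\) lies in \(C(D)\), since \(F\in C(D\times[0,L])\). The space \(\Pi_d(D)\) is finite-dimensional and hence closed. So a best uniform approximation exists: we choose \(p_j\in\Pi_d(D)\) with
\begin{equation*}
\|F(\cdot,t_j)-p_j\|_{C(D)}=\inf_{p\in\Pi_d(D)}\|F(\cdot,t_j)-p\|_{C(D)}\le e_d(F).
\end{equation*}
If one prefers to avoid the existence question, an \(\varepsilon\)-near-best choice works equally well, with \(\varepsilon\to0\) at the end.

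Next fix \(t\in(t_{j-1},t_j]\) and \(y\in D\). The triangle inequality gives
\begin{equation*}
|F(y,t)-p_j(y)|\le|F(y,t)-F(y,t_j)|+|F(y,t_j)-p_j(y)|.
\end{equation*}
Since \(|t-t_j|\le t_j-t_{j-1}\le h_N\), the first term is at most \(\omega_{F,t}(h_N)\). The second term is at most \(e_d(F)\). Taking the supremum over \(y\) and \(t\) gives \(\|F-P_{N,d}\|_{L^\infty}\le E_{N,d}(F)\); the single point \(t=0\) has measure zero and does not matter.

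For the convergence statement, we treat the two terms separately.

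\textbf{The longitudinal term.} \(F\) is uniformly continuous on the compact set \(D\times[0,L]\). Hence \(\omega_{F,t}(r)\to0\) as \(r\to0\).

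\textbf{The polynomial term.} We must show \(e_d(F)\to0\) as \(d\to\infty\), and this is the only real obstacle: the claim is uniform in \(t\). I would handle it with Stone--Weierstrass plus compactness. The family \(\{F(\cdot,t):t\in[0,L]\}\) is compact in \(C(D)\), being the continuous image of \([0,L]\). Given \(\varepsilon>0\), cover the family by finitely many balls of radius \(\varepsilon\) centred at \(F(\cdot,s_i)\). By Stone--Weierstrass on the compact set \(D\), each \(F(\cdot,s_i)\) is within \(\varepsilon\) of some polynomial, of degree at most some \(d_i\). Since \(\Pi_d\subset\Pi_{d'}\) for \(d\le d'\), every \(d\ge\max_i d_i\) gives \(e_d(F)\le2\varepsilon\).

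An alternative is to apply Stone--Weierstrass directly to \(F\) on \(D\times[0,L]\), using polynomials in \((y,t)\). Freezing \(t\) then yields polynomials in \(y\) of bounded degree that approximate uniformly in \(t\).

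Combining the two terms, \(E_{N,d}(F)\to0\) as \(h_N\to0\) and \(d\to\infty\), jointly and in any manner.
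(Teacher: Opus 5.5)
Your proof is correct. The first half is the paper's argument: freeze $t$ at one point of each cell, take a best uniform approximant from $\Pi_d(D)$, and apply the triangle inequality. You freeze at the right endpoint $t_j$, whereas the paper allows any $t_j^*\in[t_{j-1},t_j]$.

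For $e_d(F)\to0$ you take a different route. The paper sets $a_d(t)=\inf_{p\in\Pi_d(D)}\|F(\cdot,t)-p\|_{C(D)}$. Each $a_d$ is continuous, because distance to a fixed subspace is $1$-Lipschitz and $t\mapsto F(\cdot,t)$ is continuous into $C(D)$. Stone--Weierstrass gives $a_d(t)\to0$ pointwise, and Dini's theorem upgrades this to uniform convergence; Dini tacitly uses that $a_d$ is nonincreasing in $d$.

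Your finite $\varepsilon$-net on the compact orbit $\{F(\cdot,t):t\in[0,L]\}$ is a direct version of the same compactness argument. Your triangle inequality plays the role of the $1$-Lipschitz continuity of $a_d$. The nestedness $\Pi_d\subset\Pi_{d'}$ is exactly the monotonicity Dini needs, here made explicit.

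Your alternative is the most economical of the three: apply Stone--Weierstrass to polynomials in $(y,t)$ on $D\times[0,L]$, then freeze $t$. A single polynomial $Q$ of total degree $d_0$ with $\|F-Q\|_\infty<\varepsilon$ gives $e_d(F)\le\varepsilon$ for every $d\ge d_0$. No further compactness argument in $t$ is needed.

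One small correction concerns the best approximant. It exists because $\Pi_d(D)$ is finite-dimensional, so minimizing sequences are bounded and hence subconvergent; closedness alone does not suffice. Your $\varepsilon$-near-best fallback also does not by itself give the exact bound $\|F-P_{N,d}\|\le E_{N,d}(F)$. The resulting $P_{N,d}$ depends on $\varepsilon$, and removing $\varepsilon$ requires that same compactness step.
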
	

\begin{proof}
For each \(j\), choose \(t_j^*\in[t_{j-1},t_j]\). Since \(\Pi_d(D)\) is
finite-dimensional, a best approximant \(p_j\in\Pi_d(D)\) to
\(F(\cdot,t_j^*)\) exists, so
\begin{equation*}
\|F(\cdot,t_j^*)-p_j\|_{C(D)}\le e_d(F).
\end{equation*}
For \(t\in(t_{j-1},t_j]\),
\begin{equation*}
\|F(\cdot,t)-p_j\|_{C(D)}\le \|F(\cdot,t)-F(\cdot,t_j^*)\|_{C(D)}
+\|F(\cdot,t_j^*)-p_j\|_{C(D)}\le \omega_{F,t}(h_N)+e_d(F).
\end{equation*}
This proves the first assertion.

For the second, define
\begin{equation*}
a_d(t)=\inf_{p\in\Pi_d(D)}\|F(\cdot,t)-p\|_{C(D)}.
\end{equation*}
The map \(t\mapsto F(\cdot,t)\) is continuous into \(C(D)\), and the distance
to a fixed subspace is \(1\)-Lipschitz; hence each \(a_d\) is continuous. For
each fixed \(t\), Stone--Weierstrass gives \(a_d(t)\to0\). Since \([0,L]\) is
compact and \(a_d\) is continuous, Dini's theorem gives uniform convergence, so
\begin{equation*}
e_d(F)=\max_{t\in[0,L]}a_d(t)\to0.
\end{equation*}
Also \(\omega_{F,t}(h_N)\to0\) by uniform continuity of \(F\). Hence
\(E_{N,d}(F)\to0\).
\end{proof}

%%%

We now compare the exact and reduced Fourier--Laplace data. After removing the
known \(x_0\)-factor, define
\begin{equation*}
(\mathcal T_bF)(\eta)=\int_0^L\int_D
F(y,t)e^{-i\eta\cdot(y+b(t))}e^{|\eta|t}\,dy\,dt.
\end{equation*}
The same definition applies to \(\mathcal T_{b_N}P_{N,d}\). On each interval,
\(x=\Phi_{b_N}(y,t)\) has unit Jacobian, so \(\mathcal T_{b_N}P_{N,d}\) is exactly
the normalized polygonal transform \(\mathcal T_{\mathcal B_N}G_{N,d}\), where
\(G_{N,d}(\Phi_{b_N}(y,t))=P_{N,d}(y,t)\). For \(\rho\ge0\), set
\begin{equation*}
J_L(\rho)=
\begin{cases}
\dfrac{e^{\rho L}-1}{\rho},&\rho>0,\\[1ex]
L,&\rho=0.
\end{cases}
\end{equation*}

\begin{lemma}
\label{lem:curved_transform_model_error}
Let $M=\|F\|_{L^\infty(D\times(0,L))}$ and $E=E_{N,d}(F)$. Then, for every \(\eta\in\theta^\perp\) with \(\rho=|\eta|\),
\begin{equation}\label{eq:curved_transform_model_error}
\left|(\mathcal T_bF)(\eta)-(\mathcal T_{b_N}P_{N,d})(\eta)\right|
\le |D|J_L(\rho)\bigl[E+(M+E)\rho\delta_N\bigr].
\end{equation}
\end{lemma}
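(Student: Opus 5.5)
The plan is to write the difference of the two transforms as a single double integral and split the integrand into a density error and a phase error. Both transforms are integrals over $D\times(0,L)$ with the common weight $e^{|\eta|t}$, so
\begin{equation*}
(\mathcal T_bF)(\eta)-(\mathcal T_{b_N}P_{N,d})(\eta)
=\int_0^L\!\!\int_D\Bigl[F(y,t)e^{-i\eta\cdot b(t)}-P_{N,d}(y,t)e^{-i\eta\cdot b_N(t)}\Bigr]e^{-i\eta\cdot y}e^{\rho t}\,dy\,dt .
\end{equation*}
Adding and subtracting $P_{N,d}(y,t)e^{-i\eta\cdot b(t)}$, the bracket becomes
\begin{equation*}
(F-P_{N,d})(y,t)\,e^{-i\eta\cdot b(t)}
+P_{N,d}(y,t)\bigl(e^{-i\eta\cdot b(t)}-e^{-i\eta\cdot b_N(t)}\bigr).
\end{equation*}

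For the first term, $\eta$ and $b(t)$ are real vectors in $\theta^\perp$, so $|e^{-i\eta\cdot b(t)}|=|e^{-i\eta\cdot y}|=1$. Lemma~\ref{lem:piecewise_polynomial_approximation} then bounds this term pointwise by $E$. For the second term, the elementary estimate $|e^{ia}-e^{ib}|\le|a-b|$ for real $a,b$ gives
\begin{equation*}
|e^{-i\eta\cdot b(t)}-e^{-i\eta\cdot b_N(t)}|\le|\eta|\,|b(t)-b_N(t)|\le\rho\delta_N .
\end{equation*}
The same lemma gives $|P_{N,d}|\le|F|+|F-P_{N,d}|\le M+E$ a.e. Together these bound the whole bracket a.e. by $E+(M+E)\rho\delta_N$.

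To finish, integrate this bound against $e^{\rho t}$ over $D\times(0,L)$. The $y$-integral contributes the factor $|D|$, and $\int_0^Le^{\rho t}\,dt=J_L(\rho)$, including the case $\rho=0$ by the definition of $J_L$. This yields \eqref{eq:curved_transform_model_error}.

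There is no real obstacle here. The only point needing care is that all phases are purely oscillatory: $\eta\in\theta^\perp$ is real, and the real exponential factor $e^{\rho t}$ is identical in both transforms because both curves share the same axial parametrization $t$. Consequently no exponential amplification appears in the difference of the phase factors themselves.
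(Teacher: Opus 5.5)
Your proposal is correct and follows the paper's proof essentially step for step. Both proofs make the same add-and-subtract split into a density error $(F-P_{N,d})e^{-i\eta\cdot b}$ and a phase error $P_{N,d}(e^{-i\eta\cdot b}-e^{-i\eta\cdot b_N})$, bound them pointwise by $E$ and $(M+E)\rho\delta_N$ using unimodularity, $|e^{ia}-e^{ib}|\le|a-b|$, and Lemma~\ref{lem:piecewise_polynomial_approximation}, and then integrate against $e^{\rho t}$ over $D\times(0,L)$ to obtain the factor $|D|J_L(\rho)$.
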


\begin{proof}
For each \((y,t)\),
\begin{equation*}
F e^{-i\eta\cdot b}
-
P_{N,d} e^{-i\eta\cdot b_N}
=
(F-P_{N,d})e^{-i\eta\cdot b}
+
P_{N,d}\bigl(e^{-i\eta\cdot b}-e^{-i\eta\cdot b_N}\bigr).
\end{equation*}
Since \(b,b_N\in\theta^\perp\), the exponential factors have modulus one,
and \(|e^{ia}-e^{ib}|\le |a-b|\) gives
\begin{equation*}
\bigl|e^{-i\eta\cdot b(t)}-e^{-i\eta\cdot b_N(t)}\bigr|
\le \rho\delta_N.
\end{equation*}
By Lemma~\ref{lem:piecewise_polynomial_approximation},
\(\|P_{N,d}\|_\infty\le M+E\). Multiplying by \(e^{\rho t}\) and integrating
over \(D\times(0,L)\), using \(\int_0^L e^{\rho t}\,dt=J_L(\rho)\), yields
\eqref{eq:curved_transform_model_error}.
\end{proof}

%%%

For the polygonal model \(\mathcal B_N\), let
\begin{equation*}
q_d=N\binom{n-1+d}{d}.
\end{equation*}
Fix nonempty open \(U\Subset\theta^\perp\setminus\{0\}\). Choose a sampling set
\begin{equation*}
\Xi_{N,d}=\{(\rho_k,\omega_k):1\le k\le q_d\},\quad \rho_k\omega_k\in U,
\end{equation*}
such that the matrix in Theorem~\ref{thm:polygonal_finite_dimensional_stability} is invertible. Set \(\eta_k=\rho_k\omega_k\), $k=1,\cdots,q_d$. For \(\varepsilon>0\), suppose the measured samples satisfy
\begin{equation}\label{eq:model}
d_k^\delta=(\mathcal T_bF)(\eta_k)+e_k, \quad k=1,\cdots,q_d,  \quad
e=(e_1,\cdots,e_{q_d})^{T}, \quad \|e\|_{\ell^2}\le\varepsilon.
\end{equation}
Let $\widehat P_{N,d}=\sum_{k=1}^{q_d}c_k\eta_k$, and let $\widehat{\mathbf c}=(c_1,\dots,c_{q_d})^{T}$ be the unique solution of the finite sampling system
\begin{equation*}
\mathbf A_{\mathcal B_N,d,\Xi_{N,d}}^{\mathrm{geo}}\widehat{\mathbf c}
=\mathbf d^\delta,
\end{equation*}
where \(\mathbf A_{\mathcal B_N,d,\Xi_{N,d}}^{\mathrm{geo}}\) is the discrete sampling
matrix for \(\mathcal T_bF\) and \(\mathbf d^\delta=(d_1^\delta,\dots,d_{q_d}^\delta)^T\). Define
\begin{equation*}
\mathcal M_{N,d,\Xi}=|D|
\left\{\sum_{k=1}^{q_d}J_L(\rho_k)^2\bigl[E+(M+E)\rho_k\delta_N\bigr]^2\right\}^{1/2}.
\end{equation*}

%%%

\begin{theorem}
\label{thm:curved_sweep_approximation_stability}
The reconstruction defined in (\ref{eq:model}) satisfies
\begin{equation}
\begin{aligned}\label{eq:curved_sweep_approximation_stability}
\|F-\widehat P_{N,d}\|_{L^2(D\times(0,L))}
\le (|D|L)^{1/2}E_{N,d}(F)+C_{\mathcal B_N,d,\Xi_{N,d}}^{\mathrm{geo}}
\left(\varepsilon+\mathcal M_{N,d,\Xi}\right).
\end{aligned}
\end{equation}
\end{theorem}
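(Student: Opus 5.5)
The plan is a triangle inequality in material coordinates: split $F-\widehat P_{N,d}$ into a density approximation error and a coefficient reconstruction error, then control the latter through the sampling matrix. Let $P_{N,d}$ be the piecewise polynomial from Lemma~\ref{lem:piecewise_polynomial_approximation}, with coefficient vector $\mathbf c$ in the scaled monomial basis $m_\alpha^D$. Interpret $\widehat P_{N,d}$ as the piecewise polynomial built from $\widehat{\mathbf c}$ in the same basis. Then
\begin{equation*}
\|F-\widehat P_{N,d}\|_{L^2(D\times(0,L))}\le \|F-P_{N,d}\|_{L^2(D\times(0,L))}+\|P_{N,d}-\widehat P_{N,d}\|_{L^2(D\times(0,L))}.
\end{equation*}
The first term is at most $(|D|L)^{1/2}\|F-P_{N,d}\|_{L^\infty}\le(|D|L)^{1/2}E_{N,d}(F)$ by Lemma~\ref{lem:piecewise_polynomial_approximation}. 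This produces the first summand of \eqref{eq:curved_sweep_approximation_stability}.

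For the second term, I will use that $\Phi_{b_N}$ has unit Jacobian on each segment. Hence $\|P_{N,d}-\widehat P_{N,d}\|_{L^2(D\times(0,L))}=\|G_{N,d}-\widehat G_{N,d}\|_{L^2(\Omega_{\mathcal B_N})}$, where both are densities on the polygonal cylinder $\mathcal B_N$ of degree $d$. Their difference has coefficient vector $\mathbf c-\widehat{\mathbf c}$. By \eqref{eq:stability_column_length_angle_form}, the exact samples of the model satisfy $\mathbf A\mathbf c=\bigl((\mathcal T_{b_N}P_{N,d})(\eta_k)\bigr)_k$, with $\mathbf A=\mathbf A^{\mathrm{geo}}_{\mathcal B_N,d,\Xi_{N,d}}$. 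Subtracting from $\mathbf A\widehat{\mathbf c}=\mathbf d^\delta$ gives
\begin{equation*}
\mathbf A(\widehat{\mathbf c}-\mathbf c)=e+\bigl((\mathcal T_bF)(\eta_k)-(\mathcal T_{b_N}P_{N,d})(\eta_k)\bigr)_{k=1}^{q_d}.
\end{equation*}
The $\ell^2$ norm of the right side is at most $\varepsilon+\mathcal M_{N,d,\Xi}$. Here $\mathcal M_{N,d,\Xi}$ is exactly the $\ell^2$ aggregate of the pointwise bound \eqref{eq:curved_transform_model_error} from Lemma~\ref{lem:curved_transform_model_error}, evaluated at $\rho=\rho_k$.

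To close, I rerun the two estimates from the proof of Theorem~\ref{thm:polygonal_finite_dimensional_stability}. The first is the coefficient-to-density bound \eqref{eq:geometric_coefficient_to_density_bound}, which gives $\|G_{N,d}-\widehat G_{N,d}\|_{L^2}\le\sqrt{h^*|D|M_d}\,\|\mathbf c-\widehat{\mathbf c}\|_{\ell^2}$. The second is the Hadamard/adjugate bound on $\|\mathbf A^{-1}\|_2$. Multiplying them gives exactly the factor $C^{\mathrm{geo}}_{\mathcal B_N,d,\Xi_{N,d}}$ times $\varepsilon+\mathcal M_{N,d,\Xi}$, and adding the two terms proves the theorem.

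The only delicate point is bookkeeping rather than analysis. One must check that the sampling matrix for the reconstruction is the polygonal model matrix, not a matrix for the curved sweep; the phrase ``sampling matrix for $\mathcal T_bF$'' must be read this way. One must also check that the normalization (removal of $e^{ix_0\cdot\zeta_\theta}$) is the same in $\mathcal T_b$ and $\mathcal T_{\mathcal B_N}$, so that the model-error vector is exactly the one bounded in Lemma~\ref{lem:curved_transform_model_error}. I expect that identification to be the main thing to verify carefully.
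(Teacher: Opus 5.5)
Your proposal is correct and follows the paper's proof. It uses the same triangle inequality through $P_{N,d}$, the bound $(|D|L)^{1/2}E_{N,d}(F)$ for the first term, and Theorem~\ref{thm:polygonal_finite_dimensional_stability} applied to $P_{N,d}-\widehat P_{N,d}$, with the data residual bounded by $\varepsilon+\mathcal M_{N,d,\Xi}$ via Lemma~\ref{lem:curved_transform_model_error}. Writing out the residual equation $\mathbf A(\widehat{\mathbf c}-\mathbf c)=e+\bigl((\mathcal T_bF-\mathcal T_{b_N}P_{N,d})(\eta_k)\bigr)_k$ and the two constituent bounds only makes explicit what the paper's one-line invocation of that theorem does.
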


\begin{proof}
By the triangle inequality,
\begin{equation*}
\|F-\widehat P_{N,d}\|_{L^2}\le\|F-P_{N,d}\|_{L^2}+\|P_{N,d}-\widehat P_{N,d}\|_{L^2}.
\end{equation*}
Lemma~\ref{lem:piecewise_polynomial_approximation} gives
\begin{equation*}
\|F-P_{N,d}\|_{L^2(D\times(0,L))}\le (|D|L)^{1/2}E_{N,d}(F).
\end{equation*}
Since \(P_{N,d}\) and \(\widehat P_{N,d}\) lie in the same
finite-dimensional polygonal model,
Theorem~\ref{thm:polygonal_finite_dimensional_stability} yields
\begin{equation*}
\|P_{N,d}-\widehat P_{N,d}\|_{L^2} \le C_{\mathcal B_N,d,\Xi_{N,d}}^{\mathrm{geo}}
\|\mathcal T_{b_N}P_{N,d}-\mathbf d^\delta\|_{\ell^2}.
\end{equation*}
By Lemma~\ref{lem:curved_transform_model_error} and the noise assumption,
\begin{equation*}
\|\mathcal T_{b_N}P_{N,d}-\mathbf d^\delta\|_{\ell^2}
\le \|\mathcal T_{b_N}P_{N,d}-\mathcal T_bF\|_{\ell^2}+\|e\|_{\ell^2}
\le \mathcal M_{N,d,\Xi}+\varepsilon.
\end{equation*}
Combining these estimates proves
\eqref{eq:curved_sweep_approximation_stability}.
\end{proof}

The previous estimate yields an explicit rate once separate approximation bounds
are known. Suppose that, for some \(0<\alpha\le1\),
\begin{equation*}
\omega_{F,t}(r)\le C_F r^\alpha,\quad e_d(F)\le C_F d^{-\alpha},
\end{equation*}
and that \(b\in C^2([0,L];\theta^\perp)\). The second bound is a Jackson-type
hypothesis on \(D\). Then
\begin{equation*}
E_{N,d}(F)\le C_F(h_N^\alpha+d^{-\alpha}),\quad\delta_N\le \frac18\|b''\|_\infty h_N^2.
\end{equation*}
If \(\rho_*=\max_k\rho_k\), then
\begin{equation*}
\mathcal M_{N,d,\Xi}\le \sqrt{q_d}|D|L e^{\rho_*L}\left[E_{N,d}(F)
+\bigl(M+E_{N,d}(F)\bigr)\rho_*\frac18\|b''\|_\infty h_N^2\right].
\end{equation*}
This estimate alone does not imply convergence as \(N,d\to\infty\), since
\(C_{\mathcal B_N,d,\Xi_{N,d}}^{\mathrm{geo}}\) contains the reciprocal of the
sampling determinant. A convergent reconstruction requires
\begin{equation*}
E_{N,d}(F)\to 0 \quad\text{and}\quad C_{\mathcal B_N,d,\Xi_{N,d}}^{\mathrm{geo}}
\bigl(\varepsilon_{N,d}+\mathcal M_{N,d,\Xi}\bigr)\to0.
\end{equation*}
Under these conditions,
Theorem~\ref{thm:curved_sweep_approximation_stability} gives
\(\widehat P_{N,d}\to F\) in \(L^2(D\times(0,L))\).

\begin{remark}\label{rem:polynomial_not_analytic}
Theorem~\ref{thm:main_brion_barvinok_uniqueness} uses the polynomial assumption
through the finite exponential--rational expansion in Lemma~\ref{lem:weighted_BB_vertex_expansion}. Polynomial approximation does not
preserve an exact vanishing Fourier--Laplace identity, so exact uniqueness
remains restricted to polynomial densities. By contrast,
Theorem~\ref{thm:curved_sweep_approximation_stability} controls a finite
reconstruction of a continuous density and accounts explicitly for approximation
errors and the sampling condition number.
\end{remark}

%%%%%%%%%%%%%%%%%%%%%%%%%%%%%%%%%%%%%%%%%%%%%%%%%%%%%%%%%%%%
\section{Recovery of the Quotient in the Hyperbolic Inverse Problem}
\label{sec:hyperbolic_quotient_recovery}

Recall \(u_\iota\), \(\iota=1,2\), from Section~2.6 and set
\begin{equation*}
U_{\iota,\tau}:=\int_0^\infty e^{-\tau t}u_\iota(t,\cdot)\,dt.
\end{equation*}
We work in the Laplace domain, avoiding time differentiation of the sharply
truncated \(L^2\) initial states.

%%%%%%%%%%%%%%%%%%%%%%%%%%%%%%%

\subsection{A localized \texorpdfstring{\(L^p\)}{Lp} elliptic correction}

Choose a bounded smooth domain \(Q\) with \(\Omega_\sharp\Subset Q\), fix
\(U\Subset\theta^\perp\setminus\{0\}\), and set
\(\zeta_\theta(\xi)=\xi+i|\xi|\theta\).

\begin{lemma}
\label{lem:uniform_shifted_lattice_inverse}
Let \(1<p<\infty\). For \(\sigma=\pm1\), set $L_{\sigma,\xi}:=\Delta+2i\sigma \zeta_\theta(\xi)\cdot\nabla$. There exist right inverses $G_{\sigma,\xi}:L^p(Q)\to W^{2,p}(Q)$ such that
\begin{equation}\label{eq:right_inverse_equation}
L_{\sigma,\xi}G_{\sigma,\xi}F=F,\quad\text{in }Q,
\end{equation}
and
\begin{equation}\label{eq:shifted_inverse_estimate}
\|G_{\sigma,\xi}F\|_{W^{2,p}(Q)}\le C_{U,p}\|F\|_{L^p(Q)},
\quad\text{uniformly for }\xi\in U.
\end{equation}
\end{lemma}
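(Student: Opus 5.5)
The plan is to build \(G_{\sigma,\xi}\) by extension, Fourier series on a torus, and restriction, in the spirit of the Sylvester--Uhlmann/H\"ahner construction. The key point is to use a lattice shifted away from the characteristic set of the symbol. The uniformity in \(\xi\in U\) comes from compactness of \(\overline U\subset\theta^\perp\setminus\{0\}\): the quantity \(|\xi|\) stays in a compact interval \([a,b]\subset(0,\infty)\).

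\emph{Step 1 (extension to a cube).} Choose a cube \(\mathcal Q=[-R\pi,R\pi]^n\) with \(\overline Q\subset\operatorname{int}\mathcal Q\). Extend \(F\in L^p(Q)\) by zero to \(\mathcal Q\) and regard it as a \(2\pi R\)-periodic function on the torus \(\mathbb T^n_R\). If the periodic solution lies in \(W^{2,p}(\mathbb T^n_R)\), its restriction to \(Q\) gives the right inverse, and the norm only decreases under restriction.

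\emph{Step 2 (conjugation and choice of the shifted lattice).} Write \(v=e^{i\sigma|\xi|x\cdot\theta/2}w\); the exponent could equally be taken along another fixed direction. Then \(L_{\sigma,\xi}v=e^{(\cdots)}\tilde L w\), where \(\tilde L\) has constant coefficients. Its symbol on the shifted lattice \(k\in R^{-1}(\mathbb Z^n+\tfrac12\theta')\) has the form \(-|k+\tfrac{\sigma|\xi|}{2}\theta|^2+\ldots\). A cleaner route is to use H\"ahner's direct device. The symbol of \(L_{\sigma,\xi}\) at frequency \(k\) is
\[
m(k)=-|k|^2-2\sigma\zeta_\theta(\xi)\cdot k=-|k|^2-2\sigma\xi\cdot k-2i\sigma|\xi|\theta\cdot k .
\]
The imaginary part is \(-2\sigma|\xi|(\theta\cdot k)\). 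I would work with the lattice \(k\in\Gamma:=R^{-1}\mathbb Z^n+\tfrac{1}{2R}\theta\); this requires rotating coordinates so that \(\theta=e_n\), with the half-shift in that coordinate. Then \(|\theta\cdot k|\ge 1/(2R)\) for all \(k\in\Gamma\), so \(|m(k)|\ge |\xi|/R\ge a/R>0\). Moreover, for \(|k|\ge 4b\) we get \(|m(k)|\ge |k|^2/2\). Hence
\[
|m(k)|\ge c_U(1+|k|^2),\qquad k\in\Gamma,\ \xi\in U,
\]
with \(c_U\) independent of \(\xi\). Define
\[
G_{\sigma,\xi}F:=e^{ix\cdot\theta/(2R)}\sum_{k}\frac{\widehat{(e^{-ix\cdot\theta/(2R)}F)}(k)}{m(k+\theta/(2R))}e^{ik\cdot x}
\]
on the torus. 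The bounded factors \(e^{\pm ix\cdot\theta/(2R)}\) are harmless on \(Q\). By construction \(L_{\sigma,\xi}G_{\sigma,\xi}F=F\).

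\emph{Step 3 (the \(L^p\) bound, the main obstacle).} The \(L^2\) theory follows immediately from the lower bound on \(m\) and Plancherel. For \(p\ne2\) one needs a periodic multiplier theorem, and this is the step I expect to be hardest. I would show that the multipliers \(\mu_\beta(k)=k^\beta/m(k+\theta/(2R))\) for \(|\beta|\le2\) are restrictions to the lattice of smooth functions on \(\mathbb R^n\) satisfying Mikhlin bounds \(|\partial^\gamma\mu_\beta(k)|\le C_{U}(1+|k|)^{-|\gamma|}\) uniformly in \(\xi\in U\).

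The difficulty is that the continuous symbol vanishes where \(\theta\cdot k=0\). To handle this, I would multiply by a smooth cutoff \(\psi(\theta\cdot k)\) equal to \(1\) for \(|\theta\cdot k|\ge1/(2R)\) and \(0\) near \(0\). This does not change the values on \(\Gamma\) and yields a genuine Mikhlin symbol. Its constants depend only on \(a,b,R\), because the coefficients \(\xi,|\xi|\) range over a compact set. The de Leeuw/Marcinkiewicz transference theorem then gives \(L^p(\mathbb T^n_R)\) boundedness of these Fourier multipliers, with constants controlled by the Mikhlin constants. This yields \(\|G_{\sigma,\xi}F\|_{W^{2,p}(\mathbb T^n_R)}\le C_{U,p}\|F\|_{L^p(Q)}\), and restriction to \(Q\) gives \eqref{eq:shifted_inverse_estimate}.
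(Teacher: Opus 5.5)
Your Hähner-type construction works, but only after one repair, and it is a genuinely different and much heavier route than the paper's. The false claim is in Step~3. Multiplying by \(\psi(\theta\cdot\kappa')\), where \(\kappa'=\kappa+\theta/(2R)\), does \emph{not} give a Mikhlin symbol for the second-order multipliers. The \(\theta\)-derivative of \(\psi(\theta\cdot\kappa')\kappa^\beta/m(\kappa')\) contains the term \(\psi'(\theta\cdot\kappa')\kappa^\beta/m(\kappa')\). For \(\kappa^\beta=(\kappa\cdot e)^2\) with \(e\perp\theta\), this term tends to \(-\psi'(\theta\cdot\kappa')\) along the unbounded slab \(\{1/(4R)\le|\theta\cdot\kappa'|\le 1/(2R)\}\) as \(|\kappa|\to\infty\). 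Mikhlin's condition, however, demands \(O(|\kappa|^{-1})\) there.

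The repair is easy because the zero set of \(m\) is compact, namely \(\{\theta\cdot\kappa'=0,\ |\kappa'+\sigma\xi|=|\xi|\}\subset\{|\kappa'|\le 2b\}\). Use the factor \(1-\chi(\kappa')\bigl(1-\psi(\theta\cdot\kappa')\bigr)\) instead, with \(\chi\in C_c^\infty\) and \(\chi=1\) on \(|\kappa'|\le 4b\). This factor still equals \(1\) on the shifted lattice. The modified symbol is then a smooth compactly supported function plus a Mikhlin symbol supported in \(|\kappa'|\ge 4b\), with constants uniform in \(\xi\in U\), and de Leeuw's theorem applies.

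The paper needs none of this Fourier analysis. Since \(\zeta_\theta(\xi)\cdot\zeta_\theta(\xi)=0\), conjugation by the full exponential turns \(L_{\sigma,\xi}\) exactly into the Laplacian; this is not the partial conjugation you tried at the start of Step~2. Concretely, \(L_{\sigma,\xi}\bigl(e^{-i\sigma\zeta_\theta(\xi)\cdot x}v\bigr)=e^{-i\sigma\zeta_\theta(\xi)\cdot x}\Delta v\). The paper therefore sets \(G_{\sigma,\xi}F:=e^{-i\sigma\zeta_\theta(\xi)\cdot x}\Delta_D^{-1}\bigl(e^{i\sigma\zeta_\theta(\xi)\cdot x}F\bigr)\), where \(\Delta_D^{-1}\) is the Dirichlet solution operator of the smooth bounded domain \(Q\). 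The lemma then follows from two facts: the global \(W^{2,p}\) estimate for \(\Delta_D^{-1}\), and the bound on the weights \(e^{\pm i\sigma\zeta_\theta(\xi)\cdot x}\) and their first two derivatives on \(Q\), uniform for \(\xi\) in the bounded set \(U\).

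That argument pays with constants of size \(e^{C|\xi|}\) and relies on boundary regularity of \(Q\). Neither matters here, since \(U\) is bounded and \(Q\) was chosen smooth. It also does not use \(0\notin\overline U\), which your lattice shift uses through the bound \(|\xi|\ge a\). Your periodic operator buys independence from the smoothness of \(Q\), since only boundedness is used. In \(L^2\) it also gives the large-frequency decay \(\|G_{\sigma,\xi}\|_{L^2\to L^2}\lesssim|\xi|^{-1}\) that high-frequency CGO arguments require. Neither gain is used in the low-frequency corrector of Lemma~\ref{lem:hyperbolic_corrector}.
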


\begin{proof}
Let \(\Delta_D^{-1}:L^p(Q)\to W^{2,p}(Q)\cap W_0^{1,p}(Q)\) be the Dirichlet
solution operator. The global \(W^{2,p}\) estimate gives
\begin{equation*}
\|\Delta_D^{-1}F\|_{W^{2,p}(Q)}\le C_{Q,p}\|F\|_{L^p(Q)}.
\end{equation*}
For \(\sigma=\pm1\), define
\begin{equation*}
G_{\sigma,\xi}F:=e^{-i\sigma \zeta_\theta(\xi)\cdot x}
\Delta_D^{-1}\bigl(e^{i\sigma \zeta_\theta(\xi)\cdot x}F\bigr).
\end{equation*}
Since \(\zeta_\theta(\xi)\cdot\zeta_\theta(\xi)=0\),
\begin{equation*}
(\Delta+2i\sigma \zeta_\theta(\xi)\cdot\nabla)
\bigl(e^{-i\sigma \zeta_\theta(\xi)\cdot x}v\bigr)
=e^{-i\sigma \zeta_\theta(\xi)\cdot x}\Delta v.
\end{equation*}
Hence \(L_{\sigma,\xi}G_{\sigma,\xi}F=F\). Multiplication by the exponentials
is uniformly bounded on \(L^p(Q)\) and \(W^{2,p}(Q)\) for \(\xi\in U\), so the
estimate follows from the Dirichlet bound.
\end{proof}

%%%

The right inverse gives uniform control of the conjugated equation. We now use
it in a contraction argument. The factor \(\tau^2\) makes the map contractive
and permits an \(L^\infty\) coefficient with a jump at the observation
boundary.

\begin{lemma}
\label{lem:hyperbolic_corrector}
Let \(q\in L^\infty(Q)\) with \(q\ge q_->0\), and let \(p_n\) satisfy
\eqref{eq:pn_condition}. There exist \(\tau_0>0\) and \(C_U>0\) such that,
for \(0<\tau<\tau_0\) and \(\xi\in U\), the equation
\begin{equation*}
\bigl(\Delta-2i\zeta_\theta(\xi)\cdot\nabla\bigr)\psi_{\tau,\xi}
=\tau^2 q(1+\psi_{\tau,\xi}), \quad\text{in }Q,
\end{equation*}
has a solution \(\psi_{\tau,\xi}\in W^{2,p_n}(Q)\) satisfying
\begin{equation}\label{eq:corrector_estimate}
\|\psi_{\tau,\xi}\|_{W^{2,p_n}(Q)}+\|\psi_{\tau,\xi}\|_{L^\infty(Q)}\le C_U\tau^2.
\end{equation}
\end{lemma}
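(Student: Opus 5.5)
The plan is to solve the corrector equation by a fixed-point argument built on the right inverse of Lemma~\ref{lem:uniform_shifted_lattice_inverse} with \(\sigma=-1\). That lemma gives, for \(\xi\in U\), an operator
\[
G_{-1,\xi}:L^{p_n}(Q)\to W^{2,p_n}(Q),\qquad
\bigl(\Delta-2i\zeta_\theta(\xi)\cdot\nabla\bigr)G_{-1,\xi}F=F\ \text{in } Q,
\]
with \(\|G_{-1,\xi}F\|_{W^{2,p_n}(Q)}\le C_{U,p_n}\|F\|_{L^{p_n}(Q)}\) uniformly in \(\xi\in U\). We will therefore look for \(\psi_{\tau,\xi}\) as a fixed point of
\[
\mathcal K_{\tau,\xi}(\psi):=G_{-1,\xi}\bigl(\tau^2q(1+\psi)\bigr).
\]
Any fixed point in \(W^{2,p_n}(Q)\) solves the required equation in \(Q\).

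To see that \(\mathcal K_{\tau,\xi}\) maps \(X:=W^{2,p_n}(Q)\) into itself and is a contraction, we combine three facts.
\begin{enumerate}
\item Since \(Q\) is a bounded smooth domain and \(p_n>n/2\) by \eqref{eq:pn_condition}, the Sobolev embedding gives \(\|\psi\|_{L^\infty(Q)}\le C_S\|\psi\|_{W^{2,p_n}(Q)}\).
\item Since \(q\in L^\infty(Q)\) and \(Q\) is bounded, \(\|q(1+\psi)\|_{L^{p_n}(Q)}\le\|q\|_{L^\infty}\bigl(|Q|^{1/p_n}+|Q|^{1/p_n}C_S\|\psi\|_{X}\bigr)\). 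This is exactly where the jump of \(q\) across \(\partial\Omega_\sharp\) does no harm: only \(L^\infty\) control of \(q\) enters, never its derivatives.
\item Combining (1) and (2) with the uniform right-inverse bound,
\[
\|\mathcal K_{\tau,\xi}\psi_1-\mathcal K_{\tau,\xi}\psi_2\|_X\le \tau^2 C_{U,p_n}\|q\|_{L^\infty}|Q|^{1/p_n}C_S\|\psi_1-\psi_2\|_X,
\]
\[
\|\mathcal K_{\tau,\xi}0\|_X\le \tau^2 C_{U,p_n}\|q\|_{L^\infty}|Q|^{1/p_n}.
\]
\end{enumerate}
Set \(K:=C_{U,p_n}\|q\|_{L^\infty}|Q|^{1/p_n}(1+C_S)\) and choose \(\tau_0\) with \(\tau_0^2K\le 1/2\). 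Then for \(0<\tau<\tau_0\) the map \(\mathcal K_{\tau,\xi}\) is a contraction on \(X\) with constant at most \(1/2\), uniformly in \(\xi\in U\).

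Banach's fixed-point theorem then gives a unique \(\psi_{\tau,\xi}\in X\) with \(\psi_{\tau,\xi}=\mathcal K_{\tau,\xi}\psi_{\tau,\xi}\). Since the contraction constant is at most \(1/2\),
\[
\|\psi_{\tau,\xi}\|_X\le 2\|\mathcal K_{\tau,\xi}0\|_X\le 2K\tau^2 .
\]
The embedding in (1) converts this into the \(L^\infty\) bound, which yields \eqref{eq:corrector_estimate} with \(C_U=2K(1+C_S)\). This constant depends on \(U\), \(Q\), \(p_n\) and \(\|q\|_{L^\infty}\), but not on \(\tau\) or \(\xi\). The lower bound \(q\ge q_->0\) is not needed here; it is used only later.

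The only delicate point is the uniformity of \(C_{U,p_n}\) in \(\xi\). The conjugating factors \(e^{\pm i\zeta_\theta(\xi)\cdot x}\) and their first two derivatives are bounded on \(\overline Q\) by constants depending only on \(\sup_{\xi\in U}|\xi|\) and \(\operatorname{diam}Q\). This holds because \(U\Subset\theta^\perp\setminus\{0\}\) is bounded, and it is already part of Lemma~\ref{lem:uniform_shifted_lattice_inverse}.
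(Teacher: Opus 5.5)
Your proposal is correct and follows the paper's proof: a Banach fixed point for \(\psi\mapsto G_{-,\xi}\bigl[\tau^2q(1+\psi)\bigr]\) in \(W^{2,p_n}(Q)\). It uses the uniform right-inverse bound of Lemma~\ref{lem:uniform_shifted_lattice_inverse} and only \(\|q\|_{L^\infty}\), with the embedding \(W^{2,p_n}(Q)\hookrightarrow L^\infty(Q)\) supplying the final \(L^\infty\) bound. The only cosmetic difference is that you use the affine structure of the map to get a global contraction and the bound \(\|\psi_{\tau,\xi}\|\le 2\|\mathcal K_{\tau,\xi}0\|\), whereas the paper restricts to the invariant ball of radius \(2C_0\tau^2\).
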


\begin{proof}
By Lemma~\ref{lem:uniform_shifted_lattice_inverse}, solve the fixed-point
equation
\begin{equation*}
\psi=\mathcal T_{\tau,\xi}(\psi):=G_{-,\xi}\bigl[\tau^2q(1+\psi)\bigr],
\end{equation*}
in \(W^{2,p_n}(Q)\). Since \(2p_n>n\), we have
\(W^{2,p_n}(Q)\hookrightarrow L^\infty(Q)\), and therefore
\begin{equation*}
\|\mathcal T_{\tau,\xi}(\psi)\|_{W^{2,p_n}}
\le C_U\tau^2\|q(1+\psi)\|_{L^{p_n}}
\le C_U\tau^2\bigl(1+\|\psi\|_{W^{2,p_n}}\bigr),
\end{equation*}
while
\begin{equation*}
\|\mathcal T_{\tau,\xi}(\psi_1)-\mathcal T_{\tau,\xi}(\psi_2)\|_{W^{2,p_n}}
\le C_U\tau^2\|\psi_1-\psi_2\|_{W^{2,p_n}}.
\end{equation*}
Let \(C_0\) be a constant for which both estimates hold uniformly in
\(\xi\in U\). Choose \(\tau_0>0\) with \(2C_0\tau_0^2\le1\), and for
\(0<\tau<\tau_0\) set \(R_\tau=2C_0\tau^2\). If
\(\|\psi\|_{W^{2,p_n}}\le R_\tau\), then
\begin{equation*}
\|\mathcal T_{\tau,\xi}(\psi)\|_{W^{2,p_n}}
\le C_0\tau^2(1+R_\tau)\le 2C_0\tau^2 =R_\tau.
\end{equation*}
On this ball, \(\mathcal T_{\tau,\xi}\) is a contraction with Lipschitz
constant at most \(C_0\tau^2\le1/2\). Banach's fixed-point theorem gives a
unique fixed point, so
\begin{equation*}
\|\psi_{\tau,\xi}\|_{W^{2,p_n}}\le 2C_0\tau^2.
\end{equation*}
The Sobolev embedding, with a constant independent of \(\tau\) and \(\xi\),
yields \eqref{eq:corrector_estimate}. Only \(\|q\|_{L^\infty}\) is used, so a
jump of \(q\) across \(\partial\Omega_\sharp\) is allowed.
\end{proof}

For \(q=q_1\), define the spatial test function
\begin{equation*}
w_{\tau,\xi}(x):=e^{-ix\cdot\zeta_\theta(\xi)}\bigl(1+\psi_{\tau,\xi}(x)\bigr).
\end{equation*}
Then
\begin{equation*}
(-\Delta+\tau^2q_1)w_{\tau,\xi}=0,\quad\text{in }Q.
\end{equation*}

%%%%%%%%%%%%%%%%%%%%%%%%%%%%%%%%%%%%%%%%%%%%%%%%%%%%%%%%%%

\subsection{Laplace-domain integral identity and decoupling}

Set \(V_\tau:=U_{1,\tau}-U_{2,\tau}\). Equality of the Laplace-domain Cauchy
data gives
\begin{equation*}
\gamma_D V_\tau=0,\quad \gamma_N V_\tau=0, \quad\text{on }\partial\Omega_\sharp.
\end{equation*}
Also, \((-\Delta+\tau^2)V_\tau=0\) outside
\(\overline{\Omega}_\sharp\). By the zero Dirichlet trace and the energy
argument in Remark~\ref{rem:dirichlet_only_observation},
\begin{equation*}
V_\tau=0, \quad\text{in }\mathbb R^n\setminus\overline{\Omega}_\sharp.
\end{equation*}
By \eqref{eq:main_hyperbolic_laplace_equation},
\begin{equation*}
(-\Delta+\tau^2q_1)V_\tau =\tau(q_1f_1-q_2f_2)+\tau^2(q_2-q_1)U_{2,\tau},
\quad\text{in }Q,
\end{equation*}
with the right-hand side extended by zero outside \(\Omega_\sharp\).

Since \(U_{j,\tau},V_\tau\in H^2(\mathbb R^n)\), \(w_{\tau,\xi}\in W^{2,p_n}(Q)\),
and \(p_n>2\), we have \(w_{\tau,\xi}\in H^2(Q)\). Also
\(\overline{\Omega}_\sharp\Subset Q\), and \(V_\tau\) vanishes in a neighborhood
of \(\partial Q\), so its Dirichlet and Neumann traces on \(\partial Q\) are
zero. Hence the boundary term in Green's identity over \(Q\) vanishes. Dividing
by \(\tau\),
\begin{equation*}
\int_{\Omega_\sharp}(q_1f_1-q_2f_2)w_{\tau,\xi}\,dx
=-\tau\int_{\Omega_\sharp}(q_2-q_1)U_{2,\tau}w_{\tau,\xi}\,dx.
\end{equation*}
Equivalently,
\begin{align}\label{eq:hyperbolic_decoupling_tau}
\int_{\Omega_\sharp}(q_1f_1-q_2f_2)e^{-ix\cdot\zeta_\theta(\xi)}\,dx
&=-\int_{\Omega_\sharp}(q_1f_1-q_2f_2)e^{-ix\cdot\zeta_\theta(\xi)}
\psi_{\tau,\xi}\,dx \notag\\
&\quad -\tau\int_{\Omega_\sharp}(q_2-q_1)U_{2,\tau}
e^{-ix\cdot\zeta_\theta(\xi)}(1+\psi_{\tau,\xi})\,dx.
\end{align}
The first term on the right of (\ref{eq:hyperbolic_decoupling_tau}) satisfies
\begin{equation*}
\left|\int_{\Omega_\sharp}(q_1f_1-q_2f_2)e^{-ix\cdot\zeta_\theta(\xi)}
\psi_{\tau,\xi}\,dx\right|\le  C_U\tau^2\|q_1f_1-q_2f_2\|_{L^1(\Omega_\sharp)},
\end{equation*}
so it tends to zero. The assumed local decay gives
\begin{equation*}
\|U_{2,\tau}\|_{L^2(\Omega_\sharp)}
\le \int_0^\infty e^{-\tau t}\|u_2(t,\cdot)\|_{L^2(\Omega_\sharp)}\,dt
\le C\|\eta\|_{L^1}\|f_2\|_{L^2}.
\end{equation*}
Since \(e^{-ix\cdot\zeta_\theta(\xi)}(1+\psi_{\tau,\xi})\) is uniformly
bounded, Cauchy--Schwarz yields
\begin{equation*}
\left|\tau\int_{\Omega_\sharp}(q_2-q_1)U_{2,\tau}e^{-ix\cdot\zeta_\theta(\xi)}
(1+\psi_{\tau,\xi})\,dx\right|\le C_U\tau\|q_2-q_1\|_{L^\infty(\Omega_\sharp)}
|\Omega_\sharp|^{1/2}\|U_{2,\tau}\|_{L^2(\Omega_\sharp)}=O(\tau).
\end{equation*}
Letting \(\tau\to0^+\) yields
\begin{equation}\label{eq:hyperbolic_decoupled_complex_identity}
\int_{\Omega_\sharp}
(q_1f_1-q_2f_2)e^{-ix\cdot\zeta_\theta(\xi)}\,dx=0,\quad \xi\in U.
\end{equation}

%%%%%%%%%%%%%%%%%%%%%%%%%%%%%%%%%%%%%%%%%%%%%%%	

\subsection{Proof of the hyperbolic quotient theorem}

The weighted identity reduces the conclusion to the two Fourier--Laplace
injectivity theorems.

\begin{proof}[Proof of Theorem~\ref{thm:main_hyperbolic_quotient_uniqueness}]
Define
\begin{equation*}
G(x):=q_1(x)f_1(x)-q_2(x)f_2(x)=\frac{f_1(x)}{c_1^2(x)}-\frac{f_2(x)}{c_2^2(x)}.
\end{equation*}
By \eqref{eq:hyperbolic_decoupled_complex_identity},
\begin{equation*}
\int_{\Omega_\sharp}G(x)e^{-ix\cdot\zeta_\theta(\xi)}\,dx=0, \quad \xi\in U.
\end{equation*}

If \(\Omega_\sharp=\Omega_{\theta,\mu,h}\), then the oblique cylinder quotient
structure \eqref{eq:main_hyperbolic_quotient_structure_translated} gives
\begin{equation*}
G(x)=G_0(y)\chi_{\Omega_{\theta,\mu,h}}(x),
\quad G_0:=F_1-F_2\in L^1(D).
\end{equation*}
Since \(\Omega_{\theta,\mu,h}\) is the interval case of
\(\Omega_{\theta,\mu,E}\) with \(E=(0,h)\),
Theorem~\ref{thm:main_totally_real_uniqueness} yields
\(G=0\) a.e. in \(\Omega_\sharp\).

If \(\Omega_\sharp=\Omega_{\mathcal B}=\bigcup_{j=1}^N P_j\), then the
polygonal cylinder quotient structure
\eqref{eq:main_hyperbolic_quotient_structure_broken} gives
\begin{equation*}
G(x)=\sum_{j=1}^N G_j(Y(x))\chi_{P_j}(x),
\quad G_j:=F_{1,j}-F_{2,j}\in\mathbb C[y_1,\dots,y_{n-1}].
\end{equation*}
Since the available phase is \(e^{-ix\cdot\zeta_\theta(\xi)}\),
Theorem~\ref{thm:main_brion_barvinok_uniqueness} gives
\(G_1=\cdots=G_N=0\), hence \(G=0\) a.e. in \(\Omega_\sharp\).

In both cases,
\begin{equation*}
\frac{f_1}{c_1^2}=\frac{f_2}{c_2^2}, \quad\text{a.e. }\mathbb R^n,
\end{equation*}
since both quotients vanish outside \(\overline{\Omega}_\sharp\).
\end{proof}

%%%%%%%%%%%%%%%%%%%%%%%%%%%%%%%%%%%%%%%%%%%%%%%%%%%%%%%%%%%%%%%%

\section{Application to a Parabolic Passive Inverse Problem}
\label{sec:parabolic_application}

The proof of the parabolic passive inverse problem uses the same
Laplace-domain elliptic reduction as in the wave case.

%%%%%%%%%%%%%%%%%%%%%%%%%%%%%%%%%%%%%%%%%%

\subsection{Laplace-domain parabolic identity}

Let \(\lambda=\tau^2\) and define
\begin{equation*}
w_{\lambda,\xi}(x):=e^{-ix\cdot\zeta_\theta(\xi)}\bigl(1+\psi_{\tau,\xi}(x)\bigr),
\end{equation*}
where Lemma~\ref{lem:hyperbolic_corrector} is applied with \(q=q_1\). Then
\begin{equation*}
(-\Delta+\lambda q_1)w_{\lambda,\xi}=0,\quad\text{in }Q.
\end{equation*}

Set \(V_\lambda:=U_{1,\lambda}-U_{2,\lambda}\). Equality of the Laplace-domain
Cauchy data gives
\begin{equation*}
\gamma_D V_\lambda=0,\quad \gamma_N V_\lambda=0,
\quad\text{on }\partial\Omega_\sharp.
\end{equation*}
Since \((-\Delta+\lambda)V_\lambda=0\) outside
\(\overline{\Omega_\sharp}\), the same energy argument gives
\begin{equation*}
V_\lambda=0, \quad\text{in }\mathbb R^n\setminus\overline{\Omega}_\sharp.
\end{equation*}
By \eqref{eq:main_parabolic_laplace_equation},
\begin{equation*}
(-\Delta+\lambda q_1)V_\lambda =(q_1f_1-q_2f_2)+\lambda(q_2-q_1)U_{2,\lambda}, \quad\text{in }Q,
\end{equation*}
with the right-hand side extended by zero outside \(\Omega_\sharp\).

As above, \(V_\lambda\in H^2(\mathbb R^n)\) and
\(w_{\lambda,\xi}\in W^{2,p_n}(Q)\subset H^2(Q)\), since \(p_n>2\) and
\(|Q|<\infty\). Also \(\overline{\Omega}_\sharp\Subset Q\), so
\(V_\lambda\) vanishes near \(\partial Q\); hence its Dirichlet and Neumann
traces on \(\partial Q\) vanish. Green's identity over \(Q\) therefore gives
\begin{equation*}
\int_{\Omega_\sharp}
(q_1f_1-q_2f_2)w_{\lambda,\xi}\,dx=-\lambda\int_{\Omega_\sharp}
(q_2-q_1)U_{2,\lambda}w_{\lambda,\xi}\,dx.
\end{equation*}
Thus
\begin{align}\label{eq:parabolic_decoupling_tau}
\int_{\Omega_\sharp}(q_1f_1-q_2f_2)e^{-ix\cdot\zeta_\theta(\xi)}\,dx
&=-\int_{\Omega_\sharp}(q_1f_1-q_2f_2)e^{-ix\cdot\zeta_\theta(\xi)}
\psi_{\tau,\xi}\,dx\notag \\
&\quad -\lambda\int_{\Omega_\sharp} (q_2-q_1)U_{2,\lambda}e^{-ix\cdot\zeta_\theta(\xi)}(1+\psi_{\tau,\xi})\,dx.
\end{align}
Since \(\lambda=\tau^2\), the first term on the right of (\ref{eq:parabolic_decoupling_tau}) is bounded by $C_U\lambda\|q_1f_1-q_2f_2\|_{L^1(\Omega_\sharp)}$, and hence tends to zero.

For the second term on the right of (\ref{eq:parabolic_decoupling_tau}), set
$A(t):=\|u_2(t,\cdot)\|_{L^2(\Omega_\sharp)}$. Then
\begin{equation*}
\|U_{2,\lambda}\|_{L^2(\Omega_\sharp)} \le \int_0^\infty e^{-\lambda t}A(t)\,dt.
\end{equation*}
Since \(A\) is bounded and \(A(t)\to0\), the Abel mean satisfies
\begin{equation*}
\lambda\int_0^\infty e^{-\lambda t}A(t)\,dt\to 0,
\quad \mbox{as~} \lambda\to 0^+.
\end{equation*}
Indeed, given \(\varepsilon>0\), choose \(T\) with \(A(t)\le\varepsilon\) for
\(t\ge T\). Then
\begin{equation*}
\lambda\int_0^\infty e^{-\lambda t}A(t)\,dt
\le \lambda T\|A\|_{L^\infty(0,T)}+\varepsilon.
\end{equation*}
Using the uniform bound for the exponential factor and the corrector,
\begin{equation*}
\left|\lambda\int_{\Omega_\sharp}(q_2-q_1)U_{2,\lambda}e^{-ix\cdot\zeta_\theta(\xi)}
(1+\psi_{\tau,\xi})\,dx \right|
\le C_U\lambda\|q_2-q_1\|_{L^\infty(\Omega_\sharp)}|\Omega_\sharp|^{1/2}
\|U_{2,\lambda}\|_{L^2(\Omega_\sharp)}.
\end{equation*}
Hence the second term on the right of (\ref{eq:parabolic_decoupling_tau}) also tends to zero, and therefore
\begin{equation}\label{eq:para_decoupled_complex_identity}
\int_{\Omega_\sharp}(q_1f_1-q_2f_2)e^{-ix\cdot\zeta_\theta(\xi)}\,dx=0,
\quad \xi\in U.
\end{equation}

%%%%%%%%%%%%%%%%%%%%%%%%%%%%%%%%%%%%%%%%%%

\subsection{Proof of the parabolic quotient theorem}

We apply the transform injectivity results to the difference of the two
quotients.

\begin{proof}[Proof of Theorem~\ref{thm:main_parabolic_quotient_uniqueness}]
Define
\begin{equation*}
G(x):=q_1(x)f_1(x)-q_2(x)f_2(x)=\frac{f_1(x)}{\kappa_1(x)}
-\frac{f_2(x)}{\kappa_2(x)}.
\end{equation*}
By \eqref{eq:para_decoupled_complex_identity},
\begin{equation*}
\int_{\Omega_\sharp}G(x)e^{-ix\cdot\zeta_\theta(\xi)}\,dx=0,
\quad \xi\in U.
\end{equation*}

If \(\Omega_\sharp=\Omega_{\theta,\mu,h}\), then the oblique cylinder quotient
structure \eqref{eq:main_parabolic_quotient_structure_translated} gives
\begin{equation*}
G(x)=G_0(y)\chi_{\Omega_{\theta,\mu,h}}(x),
\quad G_0:=F_1-F_2\in L^1(D).
\end{equation*}
Applying Theorem~\ref{thm:main_totally_real_uniqueness} to the oblique cylinder
case \(E=(0,h)\), we get \(G=0\) a.e. \(\Omega_\sharp\).

If \(\Omega_\sharp=\Omega_{\mathcal B}=\bigcup_{j=1}^N P_j\), then the
polygonal cylinder quotient structure
\eqref{eq:main_parabolic_quotient_structure_broken} gives
\begin{equation*}
G(x)=\sum_{j=1}^N G_j(Y(x))\chi_{P_j}(x),
\quad G_j:=F_{1,j}-F_{2,j}\in\mathbb C[y_1,\dots,y_{n-1}].
\end{equation*}
Theorem~\ref{thm:main_brion_barvinok_uniqueness} applies to the same phase
\(e^{-ix\cdot\zeta_\theta(\xi)}\). Hence \(G_j=0\) for all \(j\), and
\(G=0\) a.e. \(\Omega_\sharp\).

In both cases,
\begin{equation*}
\frac{f_1}{\kappa_1}=\frac{f_2}{\kappa_2}, \quad\text{a.e. }\mathbb R^n,
\end{equation*}
since both quotients vanish outside \(\overline{\Omega}_\sharp\).
\end{proof}

%%%%%%%%%%%%%%%%%%%%%%%%%%%%%%%%%%%%%%%%%%%%%%%%%%%%%%%%%%%%

\section{Finite-Time Degradation of the Infinite-Time Result}
\label{sec:finite_time_stability}

This section proves Theorem~\ref{thm:main_finite_time_stability}. We first
separate the information contained in the measured boundary data from the
information that remains in the domain at time \(T\). 	

For each \(\iota=1,2\), let
\begin{equation*}
f_{\iota,k}\in D(A_{q_\iota}),\quad f_{\iota,k}\longrightarrow f_\iota,
\quad\text{in }L^2(\mathbb R^n),
\end{equation*}
and let \(u_{\iota,k}\) be the corresponding strong solution. Define
\begin{equation*}
W_{\iota,k}^T :=\int_0^T \eta(t)u_{\iota,k}(t)\,dt,
\end{equation*}
where $\eta(t)$ is given by (\ref{eq:WjT}).

\begin{lemma}\label{lem:finite_time_endpoint_identity}
For each \(\iota=1,2\), let $\mathcal{M}_{\iota,k}^{T} := \mathcal{G}_{W_{\iota,k}^{T}}$ \(\in \mathfrak{T}(\Omega_\sharp)^{*}\), then
\begin{equation}\label{eq:finite_time_boundary_extension}
\mathcal{M}_{\iota,k}^{T} \longrightarrow \mathcal{M}_{\iota}^{T},
\quad\text{in } \mathfrak{T}(\Omega_\sharp)^{*},
\end{equation}
and the limit is independent of the approximating sequence. Hence
\(\mathcal{M}_{\iota}^{T}\) is the unique continuous extension of the
weighted weak-Cauchy-trace functional from \(D(A_{q_\iota})\) to
\(L^{2}(\mathbb{R}^{n})\).

Moreover, for \(v \in \mathcal{H}(\Omega_\sharp)\) and generalized
finite-time boundary data as in
Subsection~\ref{subsec:main_finite_time}, both the wave and heat
equations satisfy
\begin{equation}\label{eq:finite_time_endpoint_identity}
\int_{\Omega_\sharp} G v \, dx=\int_{\Omega_\sharp} R_T v \, dx- \mathfrak{D}_T(v).
\end{equation}
If \(\mathcal{M}_{1}^{T} = \mathcal{M}_{2}^{T}\), then
\begin{equation*}
\left| \int_{\Omega_\sharp} G v \, dx \right|
\le \varepsilon_T \, \|v\|_{L^{2}(\Omega_\sharp)}.
\end{equation*}
\end{lemma}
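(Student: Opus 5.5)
We first prove the key identity $\Delta W^T_{\iota}=q_\iota(u_\iota(T)-f_\iota)$ at the level of strong solutions, and then pass to the limit.

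\emph{Strong-solution identity.} For $f_{\iota,k}\in D(A_{q_\iota})=H^2(\mathbb R^n)$, the strong solution satisfies $u_{\iota,k}\in C([0,T];H^2)$ together with the time regularity needed to integrate by parts in $t$.

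In the heat case, $\Delta u_{\iota,k}=q_\iota\partial_t u_{\iota,k}$. Integrating over $(0,T)$ gives
\begin{equation*}
\Delta W^T_{\iota,k}=q_\iota\bigl(u_{\iota,k}(T)-f_{\iota,k}\bigr).
\end{equation*}
In the wave case, $\Delta u_{\iota,k}=q_\iota\partial_t^2u_{\iota,k}$ with $\partial_tu_{\iota,k}(0)=0$. Integrating by parts twice against the weight $T-t$ gives
\begin{equation*}
\int_0^T(T-t)\partial_t^2u\,dt=\bigl[(T-t)\partial_tu\bigr]_0^T+\int_0^T\partial_tu\,dt=u(T)-f .
\end{equation*}
This is valid in $L^2$ because $u_{\iota,k}\in C^2([0,T];L^2)$ for $f_{\iota,k}\in D(A_q)$, by the spectral calculus.

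\emph{Passage to the limit and the extension claim.} The operators $f\mapsto u(T)$ and $f\mapsto W^T$ are bounded on $L^2$: $\cos(tA_q^{1/2})$ and $e^{-tA_q}$ are contractions, up to the norm equivalence given by $q$. Hence $W^T_{\iota,k}\to W^T_\iota$ in $L^2(\Omega_\sharp)$. By the identity above, $\Delta W^T_{\iota,k}=q_\iota(u_{\iota,k}(T)-f_{\iota,k})\to q_\iota(u_\iota(T)-f_\iota)$ in $L^2$. Since the distributional Laplacian is closed, the limit equals $\Delta W^T_\iota$, which proves the identity for general $f_\iota$. Thus $W^T_{\iota,k}\to W^T_\iota$ in $H_\Delta(\Omega_\sharp)$.

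The continuity estimate $\|\mathcal G_w\|\le C_n\|w\|_{H_\Delta}$ then gives \eqref{eq:finite_time_boundary_extension}. The limit is $\mathcal G_{W^T_\iota}$, which depends only on $f_\iota$, so it is independent of the approximating sequence. Since $D(A_q)$ is dense in $L^2$ and the map $f\mapsto\mathcal M^T$ is continuous, this is the unique continuous extension. For smooth data, the weak Green identity recorded before \eqref{eq:WjT} identifies $\mathcal G_{W^T_{\iota,k}}$ with the time-weighted weak Cauchy-trace functional, by commuting the traces with the Bochner integral.

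\emph{Endpoint identity.} Take $v\in\mathcal H(\Omega_\sharp)$. Then $v$ is smooth on a neighborhood of $\overline\Omega_\sharp$, so $[v]\in\mathfrak T(\Omega_\sharp)$, and $\Delta v=0$. Therefore
\begin{equation*}
\mathcal M^T_\iota([v])=\int_{\Omega_\sharp}(\Delta W^T_\iota)v\,dx=\int_{\Omega_\sharp}q_\iota\bigl(u_\iota(T)-f_\iota\bigr)v\,dx .
\end{equation*}
Subtracting the two cases $\iota=1,2$ gives
\begin{equation*}
\mathfrak D_T(v)=\int_{\Omega_\sharp} R_Tv\,dx-\int_{\Omega_\sharp} Gv\,dx,
\end{equation*}
which is \eqref{eq:finite_time_endpoint_identity}. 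If $\mathcal M^T_1=\mathcal M^T_2$, then $\mathfrak D_T=0$, and Cauchy--Schwarz gives the bound $\varepsilon_T\|v\|_{L^2}$.

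\emph{Main obstacle.} The delicate step is justifying the time regularity and the integration by parts for strong solutions in the wave case. We handle it through the spectral representation: for $f\in D(A_q)$, the identity $\partial_t^2\cos(tA_q^{1/2})f=-A_q\cos(tA_q^{1/2})f$ holds in $C([0,T];L^2)$, and $A_qv=-q^{-1}\Delta v$ by Remark~\ref{rem:hyperbolic_forward_wellposedness}.
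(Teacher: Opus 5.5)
Your proposal is correct and follows essentially the same route as the paper. You obtain $\Delta W^T_{\iota,k}=q_\iota(u_{\iota,k}(T)-f_{\iota,k})$ for strong solutions by time integration, get convergence in $H_\Delta(\Omega_\sharp)$ from the contraction bounds and closedness of the distributional Laplacian, use continuity of $w\mapsto\mathcal G_w$ with density of $\mathcal D(A_{q_\iota})$ for the extension claim, and then test with harmonic $v$ and subtract; the only difference is that the paper writes out the explicit bound $\|\mathcal M_f^T\|_{\mathfrak T(\Omega_\sharp)^*}\le C_nq_-^{-1/2}(b_T+2\|q\|_{L^\infty})\|f\|$, which you leave implicit in the boundedness of $f\mapsto W^T$ and $f\mapsto u(T)$.
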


\begin{proof}
Fix \(v\in\mathcal H(\Omega_\sharp)\). Recall the definition of \(W_\iota^T\) in (\ref{eq:WjT}); the Bochner integral lies in \(L^2(\Omega_\sharp)\).

For \(\iota=1,2\), choose \(f_{\iota,k}\in\mathcal D(A_{q_\iota})\) with
\begin{equation*}
f_{\iota,k}\longrightarrow f_\iota, \quad\text{in }L^2(\mathbb R^n),
\end{equation*}
and let {\(u_{\iota,k}\)} be the corresponding strong solution. Since {\(q_\iota\)}
is bounded above and below by positive constants, the weighted and
unweighted \(L^2\)-norms are equivalent.

\medskip
\noindent\emph{Case 1. Wave equation.}
By the spectral theorem,
\begin{equation*}
{\sup_{0\le t\le T}\|u_{\iota,k}(t)-u_\iota(t)\|_{L^2(\mathbb R^n)}
\le \|f_{\iota,k}-f_\iota\|_{L^2(\mathbb R^n)}},
\end{equation*}
hence
\begin{equation*}
{\|W_{\iota,k}^T-W_\iota^T\|_{L^2(\Omega_\sharp)}
\le \frac{T^2}{2} \sup_{0\le t\le T} \|u_{\iota,k}(t)-u_\iota(t)\|_{L^2(\Omega_\sharp)}}
\longrightarrow 0.
\end{equation*}
For the strong solution, spatial testing and two integrations in time give
\begin{equation*}
{\int_0^T(T-t)\partial_t^2u_{\iota,k}(t)\,dt
=\bigl[(T-t)\partial_tu_{\iota,k}(t)\bigr]_{0}^{T}+\int_0^T\partial_tu_{\iota,k}(t)\,dt
=u_{\iota,k}(T)-f_{\iota,k}},
\end{equation*}
since {\(\partial_tu_{\iota,k}(0)=0\)}. Therefore
\begin{equation*}
{\Delta W_{\iota,k}^T=q_\iota\bigl(u_{\iota,k}(T)-f_{\iota,k}\bigr)},
\end{equation*}
and the contraction estimate at \(t=T\) yields
\begin{equation*}
{\|\Delta W_{\iota,k}^T-q_\iota(u_\iota(T)-f_\iota)\|_{L^2(\Omega_\sharp)}
\le \|q_\iota\|_{L^\infty(\Omega_\sharp)}\Big(\|u_{\iota,k}(T)-u_\iota(T)\|_{L^2(\Omega_\sharp)}
+\|f_{\iota,k}-f_\iota\|_{L^2(\Omega_\sharp)}\Big)}\longrightarrow0.
\end{equation*}

\noindent\emph{Case 2. Heat equation.}
The semigroup contraction gives the same uniform convergence of
{\(u_{\iota,k}\) to \(u_\iota\)} on \([0,T]\). By the definitions of {\(W_{\iota,k}^T\)}
and {\(W_\iota^T\)},
\begin{equation*}
{\|W_{\iota,k}^T-W_\iota^T\|_{L^2(\Omega_\sharp)}
\le T\sup_{0\le t\le T} \|u_{\iota,k}(t)-u_\iota(t)\|_{L^2(\Omega_\sharp)}} \longrightarrow0,
\end{equation*}
and a single integration in time gives
\begin{equation*}
{\Delta W_{\iota,k}^T =q_\iota\int_0^T\partial_tu_{\iota,k}(t)\,dt=q_\iota\bigl(u_{\iota,k}(T)-f_{\iota,k}\bigr)},
\end{equation*}
so the semigroup contraction and the norm equivalence imply
\begin{equation*}
{\Delta W_{\iota,k}^T\longrightarrow q_\iota\bigl(u_\iota(T)-f_\iota\bigr)},
\quad\text{in }L^2(\Omega_\sharp).
\end{equation*}

In either model, the convergence of {\(W_{\iota,k}^T\)} and its Laplacian
gives
\begin{equation*}
{W_\iota^T\in H_\Delta(\Omega_\sharp),\quad \Delta W_\iota^T=q_\iota\bigl(u_\iota(T)-f_\iota\bigr),
\quad W_{\iota,k}^T\longrightarrow W_\iota^T},\quad \text{in }H_\Delta(\Omega_\sharp).
\end{equation*}
Indeed, the distributional identity for the limit follows by testing
against \(C_c^\infty(\Omega_\sharp)\) and passing to the limit in both
\(L^2\)-convergences.

Define {\(\mathcal M_{\iota,k}^T:=\mathcal G_{W_{\iota,k}^T}\)}. The continuity
estimate from Subsection~\ref{subsec:main_finite_time} gives
\begin{equation*}
{\|\mathcal M_{\iota,k}^T-\mathcal M_\iota^T\|_{\mathfrak T(\Omega_\sharp)^*}
\le C_n\bigl(\|W_{\iota,k}^T-W_\iota^T\|_{L^2(\Omega_\sharp)}
+\|\Delta W_{\iota,k}^T-\Delta W_\iota^T\|_{L^2(\Omega_\sharp)}\bigr)}\longrightarrow0,
\end{equation*}
with \(C_n\) independent of {\(\iota\)}, \(k\), and \(T\).

The same estimates bound the extension. Fix a coefficient \(q\) and set
\begin{equation*}
q_-:=\operatorname*{ess\,inf}_{\mathbb R^n}q>0,
\quad b_T:=
\begin{cases}
T^2/2, & \text{wave equation},\\
T, & \text{heat equation}.
\end{cases}
\end{equation*}
For \(f\in\mathcal D(A_q)\), the contraction estimates and the identity
\(\Delta W_f^T=q(u_f(T)-f)\) give
\begin{eqnarray*}
\|W_f^T\|_{L^2(\Omega_\sharp)}
&\le& b_T\,q_-^{-1/2}\|f\|_{L^2(\mathbb R^n)},\\
\|\Delta W_f^T\|_{L^2(\Omega_\sharp)}
&\le& 2\|q\|_{L^\infty(\mathbb R^n)}q_-^{-1/2}\|f\|_{L^2(\mathbb R^n)}.
\end{eqnarray*}
Consequently,
\begin{equation*}
\|\mathcal M_f^T\|_{\mathfrak T(\Omega_\sharp)^*}
\le C_n q_-^{-1/2}\bigl(b_T+2\|q\|_{L^\infty(\mathbb R^n)}\bigr)
\|f\|_{L^2(\mathbb R^n,q\,dx)}.
\end{equation*}
Thus, for each fixed \(T>0\), the map \(f\mapsto\mathcal M_f^T\) extends
uniquely and continuously from \(\mathcal D(A_q)\) to
\(L^2(\mathbb R^n,q\,dx)\).

For every \(k\), the strong solution lies in
{\(C([0,T];\mathcal D(A_{q_\iota}))\)}, and the graph norm on
{\(\mathcal D(A_{q_\iota})\)} is equivalent to the \(H^2(\mathbb R^n)\)-norm.
Hence its weak Dirichlet and Neumann traces are continuous in time in
\(H^{1/2}(\partial\Omega_\sharp)\) and
\(H^{-1/2}(\partial\Omega_\sharp)\), respectively. Since the bounded
trace maps commute with the \(H^2\)-valued Bochner integral, the weak
second Green identity and time integration give
\begin{eqnarray*}
{\mathcal M_{\iota,k}^T([\phi])}
=\int_0^T a_T(t)\Big(\langle\gamma_N{u_{\iota,k}}(t),\gamma_D\phi\rangle_{-1/2,1/2}
-\langle\gamma_N\phi,\gamma_D{u_{\iota,k}}(t)\rangle_{-1/2,1/2}\Big)\,dt.
\end{eqnarray*}
Thus the weighted weak Cauchy traces of the strong solutions determine
{\(\mathcal M_{\iota,k}^T\)}, and the convergence in
\(\mathfrak T(\Omega_\sharp)^*\) shows that {\(\mathcal M_\iota^T\)} is their
continuous extension. The limit is independent of the approximating
sequence, since it equals {\(\mathcal G_{W_\iota^T}\)}. This conclusion does
not assign pointwise-in-time Cauchy traces to the mild solution.

Now take the harmonic test function \(v\). Since \(\Delta v=0\), the
definition of {\(\mathcal M_\iota^T\)} and the terminal identity give
\begin{equation*}
{\mathcal M_\iota^T([v])=\int_{\Omega_\sharp}q_\iota\bigl(u_\iota(T)-f_\iota\bigr)v\,dx.}
\end{equation*}
Subtracting the identities for {\(\iota=1\)} and {\(\iota=2\)}, and using the
definitions of \(G\), \(R_T\), and \(\mathfrak D_T\), we get
\begin{equation*}
\mathfrak D_T(v)=\int_{\Omega_\sharp}(R_T-G)v\,dx,
\end{equation*}
which is equivalent to \eqref{eq:finite_time_endpoint_identity}.

If \(\mathcal M_1^T=\mathcal M_2^T\), then \(\mathfrak D_T(v)=0\), so
\begin{equation*}
\left|\int_{\Omega_\sharp}Gv\,dx\right|=\left|\int_{\Omega_\sharp}R_Tv\,dx\right|
\le \|R_T\|_{L^2(\Omega_\sharp)}\|v\|_{L^2(\Omega_\sharp)}
=\varepsilon_T\|v\|_{L^2(\Omega_\sharp)}.
\end{equation*}
This completes the proof.
\end{proof}

%%%

We first apply the endpoint identity to an oblique cylinder. The standard
null phase \(\eta+i|\eta|\theta\) suffices for uniqueness. A quantitative Fourier
estimate is easier using a phase adapted to the shear \(\mu\).

For \(\eta \in \theta^\perp \setminus \{0\}\), define
\begin{equation*}
\begin{aligned}
\chi_\mu(\eta)
&= \bigl((1+|\mu|^2)|\eta|^2-(\mu\cdot\eta)^2\bigr)^{1/2},
& \beta_\mu(\eta)=\frac{\mu\cdot\eta+i\chi_\mu(\eta)}{1+|\mu|^2},\\
\alpha_\mu(\eta)
&=\eta-\beta_\mu(\eta)\mu,
&\zeta_\mu(\eta)= \alpha_\mu(\eta)+\beta_\mu(\eta)\theta.
\end{aligned}
\end{equation*}

\begin{lemma}\label{lem:shear_adapted_null_phase}
The vector \(\zeta_\mu(\eta)\) satisfies $\zeta_\mu(\eta)\cdot\zeta_\mu(\eta)=0$.
Moreover, if $x=x_0+y+(\mu\cdot y+r)\theta$, $y\in D$, $0<r<h$, then
\begin{equation*}
e^{-i(x-x_0)\cdot\zeta_\mu(\eta)}=e^{-i\eta\cdot y}e^{-i\beta_\mu(\eta)r}.
\end{equation*}
\end{lemma}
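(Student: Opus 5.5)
The plan is a direct computation in two parts: first the null condition, then the phase factorization, using only the orthogonality relations \(\eta\cdot\theta=\mu\cdot\theta=0\), \(|\theta|=1\), and the bilinear (unconjugated) dot product on \(\mathbb C^n\).

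For the null condition, I would expand
\begin{equation*}
\zeta_\mu\cdot\zeta_\mu=\alpha_\mu\cdot\alpha_\mu+2\beta_\mu\,\alpha_\mu\cdot\theta+\beta_\mu^2 .
\end{equation*}
Since \(\alpha_\mu=\eta-\beta_\mu\mu\in\mathbb C\otimes\theta^\perp\), the cross term vanishes. Writing \(\beta=\beta_\mu(\eta)\), this reduces the claim to the quadratic equation
\begin{equation*}
|\eta|^2-2\beta\,\mu\cdot\eta+\beta^2(1+|\mu|^2)=0 .
\end{equation*}
The quadratic formula gives the roots
\begin{equation*}
\beta=\frac{\mu\cdot\eta\pm\sqrt{(\mu\cdot\eta)^2-(1+|\mu|^2)|\eta|^2}}{1+|\mu|^2}.
\end{equation*}
By Cauchy--Schwarz, \((\mu\cdot\eta)^2\le|\mu|^2|\eta|^2<(1+|\mu|^2)|\eta|^2\), so the discriminant is negative and its square root is \(\pm i\chi_\mu(\eta)\) with \(\chi_\mu(\eta)>0\). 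Choosing the \(+\) sign gives exactly the stated \(\beta_\mu(\eta)\), so it is a root and the null condition holds.

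For the factorization, I substitute \(x-x_0=y+(\mu\cdot y+r)\theta\) with \(y\in\theta^\perp\). Then
\begin{equation*}
(x-x_0)\cdot\zeta_\mu=y\cdot\alpha_\mu+(\mu\cdot y+r)\beta_\mu ,
\end{equation*}
using \(y\cdot\theta=0\), \(\alpha_\mu\cdot\theta=0\) and \(\theta\cdot\theta=1\). Moreover \(y\cdot\alpha_\mu=\eta\cdot y-\beta_\mu\,\mu\cdot y\), so the \(\beta_\mu\,\mu\cdot y\) terms cancel. This leaves \(\eta\cdot y+\beta_\mu r\), and exponentiating gives the stated identity.

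I expect no real obstacle. The only points requiring care are to use the bilinear product rather than a Hermitian one, and to check the sign of the discriminant, which confirms \(\chi_\mu\) is real and positive for \(\eta\ne0\).
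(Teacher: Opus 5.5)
Your proposal is correct and follows the paper's proof essentially line by line: you expand $\zeta_\mu\cdot\zeta_\mu$ using $\alpha_\mu,\mu,\eta\in\theta^\perp$ to reduce to the quadratic $(1+|\mu|^2)\beta^2-2(\mu\cdot\eta)\beta+|\eta|^2=0$, then use $\alpha_\mu+\beta_\mu\mu=\eta$ to cancel the $\mu\cdot y$ terms in the phase. Your explicit quadratic-formula check and the Cauchy--Schwarz observation that $\chi_\mu(\eta)>0$ for $\eta\ne0$ fill in details that the paper states without computation.
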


\begin{proof}
Write \(\zeta=\zeta_\mu(\eta)\), \(\alpha=\alpha_\mu(\eta)\), and
\(\beta=\beta_\mu(\eta)\). Since \(\alpha,\mu,\eta\in\theta^\perp\)
and \(\theta\cdot\theta=1\),
\begin{equation*}
\zeta\cdot\zeta=|\alpha|^2+\beta^2=|\eta-\beta\mu|^2+\beta^2
=|\eta|^2-2\beta(\mu\cdot\eta)+(1+|\mu|^2)\beta^2.
\end{equation*}
The last expression vanishes because \(\beta\) solves
\begin{equation*}
(1+|\mu|^2)\beta^2-2(\mu\cdot\eta)\beta+|\eta|^2=0.
\end{equation*}
Hence \(\zeta\cdot\zeta=0\). Also, since \(\alpha+\beta\mu=\eta\),
\begin{equation*}
(x-x_0)\cdot\zeta=y\cdot\alpha+(\mu\cdot y+r)\beta=y\cdot\eta+r\beta.
\end{equation*}
Exponentiating gives the claimed factorization.
\end{proof}

%%%

The phase in the preceding lemma gives a direct estimate of the transverse
Fourier transform. We use the convention
\begin{equation*}
\widehat g(\eta)=\int_{\mathbb R^{n-1}}g(y)e^{-i\eta\cdot y}\,dy.
\end{equation*}

\begin{lemma}\label{lem:finite_time_fourier_bound}
Let \(G(x)=G_0(y)\chi_{\Omega_{\theta,\mu,h}}(x)\), where \(G_0\in L^2(D)\),
and let \(\widetilde{G}_0\) denote its extension by zero outside \(D\). Then
\begin{equation*}
|\widehat{\widetilde{G}}_0(\eta)|
\le C_{D,\mu,h}\,\varepsilon_T\,\langle\eta\rangle^{1/2}, \quad 
\eta\in\mathbb R^{n-1}.
\end{equation*}
\end{lemma}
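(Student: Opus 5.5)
We test the endpoint identity of Lemma~\ref{lem:finite_time_endpoint_identity} against the harmonic function $v_\eta(x):=e^{-i(x-x_0)\cdot\zeta_\mu(\eta)}$. Here $\zeta_\mu(\eta)$ is the shear-adapted null phase of Lemma~\ref{lem:shear_adapted_null_phase}, and we assume the hypothesis $\mathcal M_1^T=\mathcal M_2^T$ of Theorem~\ref{thm:main_finite_time_stability}, since $\varepsilon_T$ appears alone on the right-hand side. Because $\zeta_\mu(\eta)\cdot\zeta_\mu(\eta)=0$, the function $v_\eta$ is entire and harmonic on $\mathbb R^n$, so $v_\eta\in\mathcal H(\Omega_\sharp)$. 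The lemma then gives
\begin{equation*}
\Bigl|\int_{\Omega_{\theta,\mu,h}}G\,v_\eta\,dx\Bigr|\le\varepsilon_T\|v_\eta\|_{L^2(\Omega_{\theta,\mu,h})}.
\end{equation*}
The case $\eta=0$ is handled separately with $v\equiv1$, or by continuity of $\widehat{\widetilde G}_0$.

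\emph{Left-hand side.} We use the coordinates $x=x_0+y+(\mu\cdot y+r)\theta$ with $y\in D$ and $0<r<h$. The map $(y,r)\mapsto x$ is a shear with unit Jacobian. By the factorization in Lemma~\ref{lem:shear_adapted_null_phase} and Fubini,
\begin{equation*}
\int G\,v_\eta\,dx=\widehat{\widetilde G}_0(\eta)\int_0^h e^{-i\beta_\mu(\eta)r}\,dr .
\end{equation*}
Since $\operatorname{Im}\beta_\mu=\chi_\mu/(1+|\mu|^2)\ge0$, we have $|e^{-i\beta_\mu r}|=e^{r\operatorname{Im}\beta_\mu}\ge1$. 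We need a lower bound for the modulus of the $r$-integral, not merely for the integrand. Write $b:=-i\beta_\mu$, so $\operatorname{Re}b=\operatorname{Im}\beta_\mu=:\kappa\ge0$. Then $|\int_0^h e^{br}\,dr|=|e^{bh}-1|/|b|\ge(e^{\kappa h}-1)/|b|$. Here $\chi_\mu(\eta)\ge|\eta|$ by Cauchy--Schwarz, and $|\beta_\mu|=|\eta|/\sqrt{1+|\mu|^2}$ because $|\beta|^2$ equals the product of the roots, $|\eta|^2/(1+|\mu|^2)$. Hence $\kappa\ge|\eta|/(1+|\mu|^2)$ and $|b|\le|\eta|$. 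For large $|\eta|$ this gives the lower bound $\gtrsim e^{\kappa h}/|\eta|$. For small $|\eta|$ we use the bound $\ge h\cos(\cdot)\gtrsim h$, obtained from the real part of the integrand for $|\beta|h$ small, together with continuity and compactness on a bounded set of $\eta$ where the integral does not vanish.

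\emph{Right-hand side.} Similarly,
\begin{equation*}
\|v_\eta\|_{L^2}^2=|D|\int_0^h e^{2\kappa r}\,dr\le|D|\,\frac{e^{2\kappa h}}{2\kappa},
\end{equation*}
and this is at most $|D|h e^{2\kappa h}$ for small $\kappa$. Thus $\|v_\eta\|_{L^2}\lesssim e^{\kappa h}\min(h,\kappa^{-1})^{1/2}$.

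Dividing the two estimates, the exponentials $e^{\kappa h}$ cancel. For large $|\eta|$ this yields
\begin{equation*}
|\widehat{\widetilde G}_0(\eta)|\lesssim\varepsilon_T\,|\eta|\,\kappa^{-1/2}\lesssim\varepsilon_T|\eta|^{1/2},
\end{equation*}
and for bounded $\eta$ it yields a bound $C\varepsilon_T$. Together these give the claimed $C_{D,\mu,h}\varepsilon_T\langle\eta\rangle^{1/2}$. The main obstacle is the uniform lower bound on $|e^{bh}-1|/|b|$ that matches the $e^{\kappa h}$ growth of $\|v_\eta\|$. I would handle it with the elementary inequality $|e^{bh}-1|\ge e^{\kappa h}-1$, splitting into the regimes $\kappa h\ge1$ and $\kappa h<1$. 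In the latter regime, $|\eta|$ is bounded and $|e^{bh}-1|/|b|\ge(e^{\kappa h}-1)/|b|\ge\kappa h/|\eta|\ge h/(1+|\mu|^2)$.
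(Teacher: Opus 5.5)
Your proposal is correct and follows essentially the paper's proof: the same shear-adapted harmonic test function $v_\eta=e^{-i(x-x_0)\cdot\zeta_\mu(\eta)}$, the same factorization $B_h(\beta)\widehat{\widetilde G}_0(\eta)=\int R_T v_\eta\,dx$, the same lower bound $|e^{bh}-1|\ge e^{\kappa h}-1$ with $|\beta|=|\eta|/\sqrt{1+|\mu|^2}$ and $\kappa\gtrsim|\eta|$, the same exact $L^2$ norm of $v_\eta$, and continuity of the Fourier transform at $\eta=0$. The only difference is bookkeeping. The paper combines the two bounds exactly into $|D|\frac{|\beta|^2}{2\kappa}\coth(\kappa h/2)$ and uses $\coth s\le 1+s^{-1}$, whereas you split into the regimes $\kappa h\ge 1$ and $\kappa h<1$. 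In the second regime, your elementary bound $|B_h(\beta)|\ge h/(1+|\mu|^2)$ (which in fact holds for all $\eta\neq0$) makes your earlier, vaguer continuity/compactness remark unnecessary.
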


\begin{proof}
For \(\eta\ne0\), set
\begin{equation*}
v_\eta(x)=e^{-i(x-x_0)\cdot\zeta_\mu(\eta)},
\quad \beta=\beta_\mu(\eta),
\quad b=b_\mu(\eta)=\frac{\chi_\mu(\eta)}{1+|\mu|^2}>0.
\end{equation*}
By Lemma~\ref{lem:shear_adapted_null_phase}, \(\Delta v_\eta=0\). Inserting
\(v_\eta\) into Lemma~\ref{lem:finite_time_endpoint_identity} and using the
unit Jacobian of \((y,r)\), we obtain
\begin{equation*}
B_h(\beta)\widehat{\widetilde{G}}_0(\eta)
=\int_{\Omega_{\theta,\mu,h}}R_T(x)v_\eta(x)\,dx,
\quad B_h(\beta)=\int_0^h e^{-i\beta r}\,dr.
\end{equation*}
Since
\begin{equation*}
|B_h(\beta)|=\left|\frac{e^{-i\beta h}-1}{-i\beta}\right|
\ge \frac{e^{bh}-1}{|\beta|}>0,
\end{equation*}
and
\begin{equation*}
\|v_\eta\|_{L^2(\Omega_{\theta,\mu,h})}^2
=|D|\int_0^h e^{2br}\,dr=|D|\frac{e^{2bh}-1}{2b},\quad b>0,
\end{equation*}
we have
\begin{equation*}
\frac{\|v_\eta\|_{L^2}^2}{|B_h(\beta)|^2}
\le|D|\frac{|\beta|^2}{2b}\coth\left(\frac{bh}{2}\right).
\end{equation*}
A direct calculation gives
\begin{equation*}
|\beta|^2=\frac{|\eta|^2}{1+|\mu|^2},\quad c_\mu|\eta|\le b\le C_\mu|\eta|,
\end{equation*}
for some constants \(c_\mu,C_\mu>0\). Since \(\coth s\le1+s^{-1}\) for
\(s>0\), it follows that
\begin{equation*}
\frac{\|v_\eta\|_{L^2}}{|B_h(\beta)|}\le C_{D,\mu,h}\langle\eta\rangle^{1/2}.
\end{equation*}
The Cauchy--Schwarz estimate in Lemma~\ref{lem:finite_time_endpoint_identity}
now proves the claim for \(\eta\ne0\).

Finally, since \(|D|<\infty\),
\begin{equation*}
\|G_0\|_{L^1(D)} \le |D|^{1/2}\|G_0\|_{L^2(D)},
\end{equation*}
so the zero extension \(\widetilde G_0\) lies in \(L^1(\mathbb R^{n-1})\)
and its Fourier transform is continuous. Letting \(\eta\to0\) in the estimate
for \(\eta\ne0\) gives the same bound at \(\eta=0\).
\end{proof}

%%%

We now complete the oblique cylinder part of the finite-time theorem.

\begin{proof}[Proof of \eqref{eq:main_finite_time_holder}]
Plancherel's theorem and
Lemma~\ref{lem:finite_time_fourier_bound} give
\begin{equation*}
\|\widetilde{G}_0\|_{H^r}^2 
\le C\varepsilon_T^2\int_{|\eta|\le N}\langle\eta\rangle^{2r+1}\,d\eta
+C\int_{|\eta|>N}\langle\eta\rangle^{2r}|\widehat{\widetilde{G}}_0(\eta)|^2\,d\eta
\le C\varepsilon_T^2N^{2r+n}+CM^2N^{-2(S-r)}.
\end{equation*}
Hence
\begin{equation*}
\|\widetilde{G}_0\|_{H^r}\le C\left(\varepsilon_TN^{r+n/2}+MN^{-(S-r)}\right).
\end{equation*}
If \(0<\varepsilon_T<M\), choose
\begin{equation*}
N=\left(\frac{M}{\varepsilon_T}\right)^{1/(S+n/2)}.
\end{equation*}
The two terms are then of the same order, and 
\begin{equation*}
\|\widetilde G_0\|_{H^r}\le CM^{1-\alpha_r}\varepsilon_T^{\alpha_r},
\quad \alpha_r=\frac{S-r}{S+n/2}.
\end{equation*}
If \(\varepsilon_T\ge M\), the same bound follows from
\(\|\widetilde G_0\|_{H^r}\le M\). If \(\varepsilon_T=0\),
Lemma~\ref{lem:finite_time_fourier_bound} gives
\(\widehat{\widetilde{G}}_0=0\), and the bound follows again. Thus
\eqref{eq:main_finite_time_holder} holds in all cases.

Finally, in the orthogonal coordinates
\(\theta^\perp\oplus\mathbb R\theta\), the map $(y,r)\longmapsto (y,\mu\cdot y+r)$
has unit Jacobian, and
\(G(x_0+y+(\mu\cdot y+r)\theta)=G_0(y)\). Hence
\begin{equation*}
\|G\|_{L^2(\Omega_{\theta,\mu,h})}^2=h\|G_0\|_{L^2(D)}^2.
\end{equation*}
Taking \(r=0\) gives the stated physical-space consequence.
\end{proof}

%%%

We finish with the polygonal cylinder estimate. This case is Lipschitz because the
unknown lies in a fixed finite-dimensional space.

\begin{proof}[Proof of \eqref{eq:main_finite_time_polygonal}]
For \(1\le k\le q_d\), set $v_k(x)=e^{-i(x-x_0)\cdot\zeta_\theta(\rho_k\omega_k)}$. Since \(\zeta_\theta\) is null, \(\Delta v_k=0\). By
Lemma~\ref{lem:finite_time_endpoint_identity} and the definition of the
normalized transform,
\begin{equation*}
(\mathcal T_{\mathcal B}G)(\rho_k\omega_k)=\int_{\Omega_{\mathcal B}}R_T(x)v_k(x)\,dx.
\end{equation*}
Cauchy--Schwarz gives
\begin{equation*}
|(\mathcal T_{\mathcal B}G)(\rho_k\omega_k)|
\le\varepsilon_T\|v_k\|_{L^2(\Omega_{\mathcal B})}.
\end{equation*}
Squaring and summing over \(k\), Theorem~\ref{thm:polygonal_finite_dimensional_stability}
yields
\begin{equation*}
\|G\|_{L^2(\Omega_{\mathcal B})} \le C_{\mathcal B,d,\Xi}^{\mathrm{geo}}
K_{\mathcal B,\Xi}\varepsilon_T.
\end{equation*}
This proves \eqref{eq:main_finite_time_polygonal}.
\end{proof}

The terminal residual is estimated by a local decay bound. Suppose
\begin{equation*}
\|u_\iota(T,\cdot)\|_{L^2(\Omega_\sharp)}\le M_0\varrho(T),
\quad \iota=1,2,
\end{equation*}
and \(\|q_\iota\|_{L^\infty}\le q_+\). Then
\begin{equation*}
\varepsilon_T\le2q_+M_0\varrho(T).
\end{equation*}
Hence the oblique cylinder estimate is of order
\(\varrho(T)^{(S-r)/(S+n/2)}\), while the polygonal cylinder estimate is of
order \(\varrho(T)\). This conclusion is conditional on the stated decay
bound; positivity and boundedness of the coefficient alone do not yield a
uniform decay rate.

%%%%%%%%%%%%%%%%%%%%%%%%%%%%%%%%%%%%%%%%%%%%%%%%%%%%%%%%%%%%%%%%%

\section{Conclusion}\label{sec:conclusion}

We proved injectivity of the null-phase Fourier--Laplace transform on two
classes of codimension-one densities. The oblique cylinder proof relies on
holomorphic uniqueness on a maximally totally real submanifold. The
polygonal cylinder proof uses a finite Brion--Barvinok expansion and separates
the ordered endpoints.

A fixed polygonal model of bounded polynomial degree satisfies a
finite-dimensional Lipschitz estimate. The same estimate controls the
reduction of a continuous generating curve and a continuous material density to a
polygonal and polynomial model. The reconstruction bound separates
measurement noise, density approximation, curve approximation, and the
sampling condition number. The last factor is essential: function and curve
approximation alone do not imply convergence of the inverse reconstruction.

For the wave and heat equations, the initial state vanishes outside the
observation domain. The Laplace-domain data determine the whole-space
quotients
\begin{equation*}
\frac{f}{c^2}\quad\text{and}\quad \frac{f}{\kappa},
\end{equation*}
respectively, so the coefficient and the initial state are not recovered
separately. If only the weighted generalized boundary data on \([0,T]\) are
measured, the quotient is determined up to the terminal residual. Equal finite-time data yield a H\"older estimate in the oblique cylinder model and a Lipschitz estimate in the fixed polygonal cylinder model. Both bounds contain
\(q_1u_1(T)-q_2u_2(T)\). Hence the infinite-time uniqueness result is
recovered when this residual vanishes, and a known decay estimate gives
approximate uniqueness as \(T\) increases. These estimates do not account for
noisy boundary data.

%%%%%%%%%%%%%%%%%%%%%%%%%%%%%%%%%%%%%%%%%%%%%

\section*{Acknowledgments}
Y. Jiang was supported in part by China's National Key R\&D Program (2024YFA1012302), and the RGC Project JRFS2627-1S06. H. Liu was supported by the Hong Kong RGC General Research Funds (projects 11311122, 11304224 and 11303125). T. Ni is supported by the Postdoctoral Fellowship Program (Grant No.BX2026004) and China Postdoctoral Science Foundation (Grant Number 2025M783082). The work of K. Zhang is supported in part by the National Natural Science Foundation of China (Grant No. 12271207).
%%%%%%%%%%%%%%%%%%%%%%%%%%%%%%%%%%%%%%%%%%%%%%%%%%%%%%%%%%%%%%%%%

\begingroup
\bibliographystyle{plain}
\bibliography{reference}
\endgroup
\end{document}